\theoremstyle{plain}
\newtheorem{theorem}{Theorem}[section]
\newtheorem{proposition}[theorem]{Proposition}
\newtheorem{lemma}[theorem]{Lemma}
\newtheorem{remark}[theorem]{Remark}
\theoremstyle{definition}
\newtheorem{definition}[theorem]{Definition}
\newcommand{\RR}{\mathbb R}
\newcommand{\ZZ}{\mathbb Z}
\newcommand{\CC}{\mathbb C}
\newcommand{\NN}{\mathbb N}
\newcommand{\R}{\mathcal{R}}
\renewcommand{\L}{\mathcal{L}}
\newcommand{\tL}{\tilde{\mathcal{L}}}
\newcommand{\D}{\mathcal{D}}
\newcommand{\X}{\mathcal{X}}
\newcommand{\bu}{\bar{u}}
\newcommand{\bv}{\bar{v}}
\newcommand{\bw}{\bar{w}}
\newcommand{\rstar}{r_*}
\newcommand{\tend}{t_{\text{\textup{end}}}}
\newcommand{\runfile}{\texttt{runproofs.m}}
\renewcommand{\d}{\mbox{d}}
\newcommand{\bydef}{\,\stackrel{\mbox{\tiny\textnormal{\raisebox{0ex}[0ex][0ex]{def}}}}{=}\,} 
\DeclareMathOperator{\bbox}{Box}
\title{Validated integration of semilinear parabolic PDEs}
\author{Jan Bouwe van den Berg \thanks{VU Amsterdam, Department of Mathematics, De Boelelaan 1081, 1081 HV Amsterdam, The Netherlands. \texttt{janbouwe@few.vu.nl}} $\quad$ Maxime Breden \thanks{CMAP, CNRS, \'Ecole polytechnique, Institut Polytechnique de
Paris, 91120 Palaiseau, France. \texttt{maxime.breden@polytechnique.edu}} $\quad$ Ray Sheombarsing \thanks{VU Amsterdam, Department of Mathematics, De Boelelaan 1081, 1081 HV Amsterdam, The Netherlands. \texttt{r.s.s.sheombarsing@outlook.com}}} 
\date{}
\begin{document}

\maketitle

\begin{abstract} 
Integrating evolutionary partial differential equations (PDEs) is an essential ingredient for studying the dynamics of the solutions. Indeed, simulations are at the core of scientific computing, but their mathematical reliability is often difficult to quantify, especially when one is interested in the output of a given simulation, rather than in the asymptotic regime where the discretization parameter tends to zero. In this paper we present a computer-assisted proof methodology to perform rigorous time integration for scalar semilinear parabolic PDEs with periodic boundary conditions. We formulate an equivalent zero-finding problem based on a variation of constants formula in Fourier space. Using Chebyshev interpolation and domain decomposition, we then finish the proof with a Newton--Kantorovich type argument. The final output of this procedure is a proof of existence of an orbit, together with guaranteed error bounds between this orbit and a numerically computed approximation.
We illustrate the versatility of the approach with results for the Fisher equation, the Swift--Hohenberg equation, the Ohta--Kawasaki equation and the Kuramoto--Sivashinsky equation. We expect that this rigorous integrator can form the basis for studying boundary value problems for connecting orbits in partial differential equations.
\end{abstract}

\begin{center}
{\bf \small Keywords} \\ \vspace{.05cm}
{ \small Parabolic PDEs $\cdot$ Initial value problems $\cdot$ Computer-assisted proofs $\cdot$ A posteriori error estimates}
\end{center}

\begin{center}
{\bf \small Mathematics Subject Classification (2020)}  \\ \vspace{.05cm}
{\small 35K15 $\cdot$ 35K30 $\cdot$ 35K58 $\cdot$  37L65 $\cdot$ 65G20 $\cdot$ 65M15 $\cdot$ 65M70} 
\end{center}


\section{Introduction}
\label{sec:introduction}

During the last decades, computer-assisted proofs have become an increasingly effective tool in 
the study of nonlinear ordinary differential equations (ODEs) and dynamical systems in general. 
The rapid progress of the development of computer hardware has made it possible to put numerical 
simulations on a rigorous footing through the construction of theorems whose
hypotheses can be verified with the aid of a computer. Today, there exists a large variety of rigorous 
numerical methods for studying invariant objects in systems of ODEs, such as equilibria, periodic orbits, 
connecting orbits, invariant manifolds, etc. In particular, we mention the prominent software packages 
CAPD~\cite{KapMroWilZgl21}, COSY~\cite{COSY} and Intlab~\cite{Bun20,Intlab}.
Detailed knowledge of invariant objects can provide deep insight into the global structure of a dynamical system,
which for nonlinear systems is typically difficult to obtain solely from pen-and-paper analysis. 

While evolutionary partial differential equations (PDEs) and the associated \emph{infinite dimensional dynamical systems} are much more challenging, computer-assisted proofs are also gradually being developed  for this setting. In particular, going back to the early works~\cite{Nak88,Plu92}, many computer-assisted proofs techniques have been developed for elliptic problems, corresponding to stationary solutions of associated parabolic PDEs), see for instance~\cite{MR2679365,MR3408840,MR2338393,Ois95,TakLiuOis13,BerWil19,Yamamoto,MR1968368}. A complete review of these works is beyond the scope of this paper, but we refer the interested readers to the recent book~\cite{NakPluWat19} and the references therein. In addition to steady states, traveling waves solutions have been studied with computer-assistance~\cite{MR3281845, MR2220064,BerShe20}, as well as periodic solutions of PDEs~\cite{MR2728184,MR3633778,MR3623202,BerBreLesVee21,MR2049869}, even for some ill-posed problems~\cite{CasGamLes18}. More recently, rigorous enclosures for local stable and unstable manifolds in parabolic PDEs have been developed as well~\cite{MirRei19,BerJaqMir22}. We also mention the works~\cite{BucCaoGom22,CheHou22}, where other state-of-the-art computer-assisted proofs for PDEs are described.

In this article, we introduce a rigorous initial value solver for semilinear parabolic PDEs. One main motivation behind this work is to later use this rigorous solver to validate boundary value problems for parabolic PDEs, and to combine it with the above mentioned techniques for local stable and unstable manifolds~\cite{MirRei19,BerJaqMir22}, in order to validate connecting orbits in parabolic PDEs, potentially between two saddle equilibria. However, since rigorous integration of PDEs is already a relatively involved task, we focus in this work on IVPs only. More precisely, in this paper we develop a rigorous computational method for validating solutions of initial value problems for scalar semilinear parabolic equations, with periodic boundary conditions, of the form
\begin{align}
	\begin{cases}
		\dfrac{ \partial u}{ \partial t} = 
		(-1)^{R+1} \dfrac{ \partial^{2R} u}{ \partial x^{2R}} + \displaystyle\sum_{j=0}^{2R-1} \dfrac{\partial^{j} g^{(j)} (u)}{\partial x^j} ,
		& t \in (0,2\tau], \ x \in [0, 2\pi], \\[2ex]
		\dfrac{ \partial^{j} u}{ \partial x^{j}}u(t,0)=\dfrac{ \partial^{j} u}{ \partial x^{j}}u(t,2\pi),  & t \in [0,2\tau], \ j=0,\dots,2R-1, \\[2ex]
		u (0, x) = f ( x ), & x \in [0,2\pi], 
	\end{cases}
	\label{eq:PDE}
\end{align}
where $R\in\NN_{\geq 1}$ is a positive integer, $\tau>0$ determines the integration time, $g^{(j)}:\RR\to\RR$ are given polynomial functions for $j=0,\ldots,2R-1$, and $f: \RR \rightarrow \RR$ is a smooth $2\pi$-periodic function. 

\begin{remark}
The factor $2$ in the time interval $(0,2\tau]$ is of course inconsequential, and only introduced to simplify the expressions we obtain after rescaling time in Section~\ref{sec:functional_setup}.
\end{remark}

To sketch the context and be able to compare with the existing literature, we start with a more conceptional discussion rather than diving into details immediately.
Conceptually, one can think of~\eqref{eq:PDE} as being simply an equation of the form
\begin{align}
\label{eq:H}
\dfrac{ \partial u}{ \partial t} = H(u),
\end{align}
and we assume to have an approximate solution $\bu$ at our disposal. Our strategy for validating this approximate solution starts by using a well chosen splitting of the righthand side as 
\begin{align}
\label{eq:split}
H(u) = \L u + G(u),
\end{align}
where $\L$ is a linear operator which may or may not depend on time. 
When $\L$ is time \emph{independent}, as it essentially is in the current work (although it is taken piecewise constant in time at a later stage), 
we can rewrite~\eqref{eq:H} as a fixed point problem, using the semigroup associated to $\L$ and the fixed point operator $\tilde T$ defined by
\begin{align}
\label{eq:tildeT}
\tilde T(u)(t) = e^{t\L}f + \int_0^t e^{(t-s)\L}G(u(s))\d s,\quad t\in[0,2\tau].
\end{align}
Our validation procedure for proving the existence of an orbit near $\bu$ will be based on the Banach fixed-point theorem. Therefore, we would like $\tilde T$ to be a contraction on a neighborhood of $\bu$, preferably for long integration times $2\tau$.
\begin{remark}
We rely heavily on the structure of equation~\eqref{eq:PDE}, and more precisely on the fact that the highest order term $(-1)^{R+1} \dfrac{ \partial^{2R}}{ \partial x^{2R}}$ is dissipative. Indeed, this ensures that the associated semi-group is contracting on the higher order Fourier modes. Yet, we want our approach to be applicable for equations showcasing interesting dynamics, hence we must also be able to handle the appearance of some unstable modes, which means we allow the semigroup generated by $\L$ to not be contracting for (finitely many) low order modes.

We note that, of course, using the splitting $H(u) = \L u + G(u)$, one could always artificially choose $\L$ so that the associated semigroup is contracting, but that does not mean that $\tilde T$ itself will be contracting
for larger integration times, since this is also influenced by $G(u)$. 
For instance, the scalar ODE $u'=u$ can be split as $u'=-ku + (k+1)u$, with $k>0$, which gives rise to 
\begin{align*}
\tilde T(u)(t) = e^{-tk}f + (k+1)\int_0^t e^{-(t-s)k}u(s) \d s, 
\end{align*}
which is not contracting in $C^0$, except for short times.
\end{remark}
The pivotal choice of $\L$ that we make in this work is meant to help us getting a contracting operator $\tilde T$, even when the semigroup generated by $\L$ is not contracting for low order modes. To that end, we take for $\L$ an approximation of $DH(\bu)$, the Fr\'echet derivative of $H$ at the approximate solution, in order to make $DG(\bu) = \L - DH(\bu)$ small. However, by imposing that $\L$ does not depend on time, in general we cannot hope to get $\L$ arbitrarily close to $DH(\bu)$. Therefore, we further reformulate the problem by first turning it into a zero finding problem
\begin{align*}
F(u)(t) \bydef e^{t\L}f + \int_0^t e^{(t-s)\L} G(u(s)) \d s - u(t) = 0,\quad \text{for all }t\in[0,2\tau],
\end{align*}
which itself is then turned into another fixed point problem, given by a Newton-like operator 
\begin{align}
\label{eq:NK}
T:u\mapsto u -A F(u),
\end{align}
where $A$ is a suitable approximate inverse of $DF(\bu)$. Notice that, for $A=I$, we get $T=\tilde{T}$. Since, as mentioned previously, we already expect $\tilde T$ to be contracting for sufficiently high order Fourier modes, we only need $A$ to act nontrivially on finitely many modes. Once a suitable approximate inverse $A$ is defined, we then want to prove that $T$ is a contraction in a neighborhood of $\bu$, and this is what most of this paper is devoted to. 

Before we proceed, let us again emphasize that the issue of whether $T$ is contracting or not, and the issue of whether or not we are actually able to prove it (with computer assistance), both depend greatly on three main (related) ingredients:
\begin{itemize}
\item the choice of $\L$,
\item the choice of $A$,
\item the choice of the Banach space in which we try to prove the contraction.
\end{itemize}
In light of the above discussion, we now review some of the existing literature regarding rigorous integrators for parabolic PDEs, and describe how they relate to or differ from our proposed approach.

One class of rigorous PDE integrators, which use completely different techniques compared to the ones discussed in this paper, is the one developed in~\cite{MR2049869, MR2788972, MR1838755}. It is not centered around a fixed point argument, but rather based on 
the validated integration of a finite dimensional system of ODEs using Lohner-type algorithms developed in 
\cite{MR1930946} and on the notion of self-consisted bounds introduced in~\cite{MR1838755}. This methodology was further developed in~\cite{MR3167726} and has been used to rigorously study, among other things, globally attracting solutions in the one dimensional Burgers equation~\cite{MR3338669}, heteroclinic connections in the one-dimensional Ohta-Kawasaki model~\cite{Ohta} and very recently to prove chaos in the Kuramoto-Sivashinky equation~\cite{WilZgl20}.

A second family of rigorous PDE integrators, to which our approach belongs, is based on a fixed point reformulation which allows to prove, a posteriori, that a true solution exists in a small neighborhood of a numerically computed approximation. 
Within this class, we first mention the foundational work~\cite{MR2728184}, where numerical solutions are validated using a fixed point operator similar to $\tilde T$ in~\eqref{eq:tildeT}, and where $\L$ is in some sense chosen to be simple, namely $\L=DH(0)$. In the context of~\eqref{eq:PDE}, the approach from~\cite{MR2728184} would amount to taking\footnote{To be more precise, the operator $\L$ used in~\cite{MR2728184} is sometimes slightly modified, the parts that would contribute to eigenvalues with positive real part being moved to $G$.}
\begin{align}
\label{eq:L_ArioliKoch}
\L = (-1)^{R+1} \dfrac{ \partial^{2R}}{ \partial x^{2R}} + \displaystyle\sum_{j=0}^{2R-1} \alpha_j\dfrac{\partial^{j} }{\partial x^j},
\end{align} 
where $\alpha_j\in\RR$ is the derivative of $g^{(j)}$ at $0$. This choice for $\L$ is enticing for making it easier to control the associated semigroup, which is needed when trying to prove that $\tilde T$ is a contraction. This is especially true if the domain and the boundary conditions allow for an explicit diagonalization of such an $\L$, as is the case in~\cite{MR2728184} and in the current paper. However, while this choice of $\L$ simplifies the analysis of $\tilde T$, it also comes at the cost of making the operator $\tilde T$ potentially \emph{less contracting} ($DG(\bu)$ will not be arbitrarily small), and since~\cite{MR2728184} does not use a subsequent Newton-like reformulation, the fixed point operator can generally only be contracting for relatively small values of the integration time $2\tau$. A similar approach, at least at the level of choosing the fixed point reformulation, is used in~\cite{CyrLes22}\footnote{Formally, in~\cite{CyrLes22} Equation~\eqref{eq:H} is first rewritten as $\tilde{F}(u) = \left(\partial_t\right)^{-1}\left( \partial_t u - H(u)\right) = 0$, and then turned into the fixed point problem $\tilde{T}(u) = \left(D\tilde{F}(0)\right)^{-1} \left(D\tilde{F}(0) u -\tilde{F}(u) \right)=u$. However, noticing that $\left(D\tilde{F}(0)\right)^{-1}= \left(\partial_t - DH(0) \right)^{-1} \partial_t$, 
one finds that $\tilde{T}(u)$ is nothing but $\left(\partial_t - DH(0) \right)^{-1}\left(H(u) - DH(0)u\right)$, which corresponds to~\eqref{eq:tildeT} with $\L=DH(0)$.}, although the main point of the latter paper is different and revolves around a Fourier-Chebyshev (fully) spectral approach, see also Remark~\ref{rem:approx}.

On the other end of the spectrum, at least with regards to $\L$, are the techniques introduced in~\cite{TakLesJaqOka22,TakMizKubOis17}. There, in terms of the splitting~\eqref{eq:split}, $\L$ is chosen as a very accurate time dependent approximation of $DH(\bu)$. This means that the mild solution reformulation then reads
\begin{align*}
u(t) = U(t,0)f + \int_0^t U(t,s)G(u(s))\d s,\quad t\in[0,2\tau],
\end{align*}
where $U(t,s)$ is the evolution operator associated to $\L$, i.e. $U(t,s)\varphi$ is the solution at time $t$ of
\begin{align*}
	\begin{cases}
		\dfrac{ \partial u}{ \partial t} = 
		\L(t) u ,
		& t \in (s,2\tau], \ x \in [0, 2\pi], \\[2ex]
		\dfrac{ \partial^{j} u}{ \partial x^{j}}u(t,0)=\dfrac{ \partial^{j} u}{ \partial x^{j}}u(t,2\pi),  & t \in [s,2\tau], \ j=0,\dots,2R-1, \\[2ex]	
		u (s, x ) = \varphi ( x ), & x \in [0,2\pi ].
	\end{cases}
\end{align*}
By allowing $\L$ to depend on time, one can in principle make $DG(\bu)$ arbitrarily small, hence ensuring that the fixed point operator
\begin{align}
\label{eq:nonautonomousevolution}
\tilde T(u)(t) = U(t,0)f + \int_0^t U(t,s)G(u(s))\d s,\quad t\in[0,2\tau],
\end{align}
is contracting in a neighborhood of $\bu$. Nevertheless, when actually having to prove that $\tilde T$ is a contraction in this setting, one then has to explicitly control the evolution operator $U(t,s)$, which can be much more challenging and resource consuming than only having to control the semigroup associated to a time \emph{independent}~$\L$.

Yet another approach, which starts with a different viewpoint but for which the fixed point reformulation ends up being very similar, is the one in~\cite{HasKinNak20}, see also~\cite[Chapter 5]{NakPluWat19}. In the context of~\eqref{eq:PDE}, assuming $R=1$, the approach in~\cite{HasKinNak20} can be interpreted as taking $\L$ to be exactly equal to $DH(\bu)$, and the validation is then based on applying Schauder's fixed point theorem to the operator~\eqref{eq:nonautonomousevolution},
where now $U(t,s)$ is the evolution operator associated to $DH(\bu)$. This choice of $\L$ leads to $DG(\bu) = 0$, hence one should be able to find a small neighborhood of $\bu$ being mapped into itself by $\tilde T$, but doing so now requires controlling the evolution operator associated to $\L=DH(\bu)$. In the end, this is accomplished by introducing some approximation of $DH(\bu)$, see e.g.~\cite{HasKimMinNak19} for the details. We point out that some of the ideas in this approach but also in the other ones, can be traced back to the early work~\cite{Nak91}, in which $\L$ was simply taken to be the Laplacian.

All these works share some similarities regarding the fixed point operator used for the validation, or, at the very least, they can all be reformulated into this common framework. However, we emphasize that they then differ significantly when it comes to how this fixed point operator is analyzed, see also Remark~\ref{rem:approx}. Regarding the choice of $\L$, our own work can be seen as a compromise between the different approaches just mentioned, as we try to take for $\L$ an approximation of $DH(\bu)$ which is \emph{as good as we can while remaining time independent}, thereby avoiding the problem
of having to deal with a solution operator of a non-autonomous system.

\begin{remark}
In all of the approaches discussed above, as well as in the current work, when trying to validate solutions for larger and larger time intervals, it proves more efficient (and it is often mandatory in order to obtain a proof) to split the interval $[0,2\tau]$ into shorter pieces. Of course, one has to glue these subproblems together, and to do so there are two alternatives, which we call here \emph{time stepping} (also sometimes referred to as \emph{concatenation scheme}) and \emph{domain decomposition}. 

The idea of time stepping is to start with some initial condition and to rigorously integrate the associated initial value problem on a small time 
interval. If the proof is successful, we compute a rigorous enclosure for the endpoint of the orbit and try to integrate the \emph{enclosure} forward in 
time. This process is repeated as long as necessary (and possible). In order for long time integration to be feasible, the size of the 
enclosures should be ``managed'' properly. In particular, they should not grow too fast. 
This is where the dissipativity of the system can prove very helpful, if correctly leveraged.

An alternative to time stepping is domain decomposition. The idea of this approach is to still split the time domain into smaller subintervals, but then to validated everything at once, via a single fixed point operator coupling all pieces together. This solution can be computationally more expensive, but has the advantage of being directly adaptable to solving boundary value problems in time, which is not as straightforward for the time stepping approach. This time domain decomposition strategy already proved very powerful in the context of validated integration of ODEs~\cite{BerShe21}.

In this work, we mainly focus on the domain decomposition approach, which will be presented in detail in Section~\ref{sec:domaindecomposition}, but time stepping is also covered in Appendix~\ref{app:timestepping}.
\end{remark}

\begin{remark}
\label{rem:approx}
Up to now, while reviewing the literature concerning rigorous integration of PDEs, we stayed at a rather general level of exposition, and in particular have not yet mentioned discretization choices.  We purposely postponed this discussion until now, in order to first better emphasize the conceptual similarities between these different techniques, but of course the way the approximate solutions are represented (and the often related choices of function spaces that are made) are crucial when actually having to derive the precise estimates needed for the validation.

Regarding the spatial variable, Fourier series and associated sequences spaces are used in~\cite{MR2728184}, whereas results using finite elements and Sobolev spaces can be found in~\cite{HasKinNak20,NakPluWat19}. Regarding the time variable, most of the existing works make use of approximations based on low order 1D finite elements, the notable exception being~\cite{CyrLes22}, where a setup allowing for a spectral approach in both space and time is introduced.
\end{remark}
 
In this work, we make use of a Fourier decomposition in space. Roughly speaking, dissipativity then tells us that the long time behavior of the dynamical system should mostly be governed by only a finite number of Fourier modes, see Section~\ref{sec:functional_setup}. This is why it makes sense to study~\eqref{eq:PDE} via a Fourier expansion in space in the first place, on both paper and in the computer, and it influences the way we define $\L$. Specifically, for the high frequency Fourier modes, we take a time and space average of $DH(\bu)$ as a starting point for defining $\L$, which yields a diagonal operator in Fourier space. We then also incorporate a time independent approximation of $DH(\bu)$ projected onto finitely many (low frequency) Fourier modes. Regarding the time variable, we approximate the solution using polynomial interpolation at Chebyshev nodes, which allows us to easily obtain high order approximations when needed, e.g.\ for accuracy purposes.

\medskip

To summarize, the key features of this work are:
\begin{itemize}
\item A new and deliberate choice of the operator $\L$ used for the Duhamel reformulation~\eqref{eq:tildeT}, specifically: time independent but spatially dependent. This greatly improves the applicability of the validation procedure compared to more naive choices like $\L = (-1)^{R+1} \dfrac{ \partial^{2R}}{ \partial x^{2R}} $ or $\L$ as in~\eqref{eq:L_ArioliKoch}, because we allow $\L$ to depend on the approximate solution $\bu$.
\item The incorporation of the domain decomposition technique, which will facilitate a future extension to boundary value problems in time, and which comes with an interesting twist in the way the approximate inverse $A$ used for the Newton-Kantorovich reformulation~\eqref{eq:NK} is defined (see Section~\ref{sec:setup_DD}).
\item
In the context of domain decomposition, the operator $\L$ can be interpreted as being time dependent in the sense that it is piecewise constant in time.
The Newton-Kantorovich contraction theorem is then applied to a function space with a (time-dependent) weighted norm, where the weight are piecewise constant in time, see~\eqref{e:def_norm_eta}.
\item The usage of Chebyshev interpolation in time, whose order can be adapted to the problem at hand.
\item General estimates and associated code (available at~\cite{integratorcode}) for equations of the form~\eqref{eq:PDE}. That is, the user only has to provide $R\in\NN_{\geq 1}$, $\tau>0$,  polynomial functions $g^{(j)}$ for $j=0,\ldots,2R-1$, and initial data $f$; and the code then automatically generates an approximate solution and tries to validate it using the estimates developed in this paper. This does involve tweaking of computational parameters, which is partly automated and partly manual.
\end{itemize}

\medskip

Below is an example of the type of results that can be obtained with our approach. 

\begin{theorem}
\label{th:SH}
Consider the Swift-Hohenberg equation
	\begin{align}
		\label{eq:SH}
		\begin{cases}
			\dfrac{ \partial u }{ \partial t} = 
			- \left(\dfrac{ \partial^{2} }{ \partial x^{2}} +1\right)^2 u +\alpha u - u^3
			, & (t,x) \in (0,\tend] \times [0,L], \\[2ex]
		\dfrac{ \partial^{j} u}{ \partial x^{j}}u(t,0)=\dfrac{ \partial^{j} u}{ \partial x^{j}}u(t,L),  & t \in [0,\tend], \ j=0,1,2,3, \\[2ex]			
			u (0, x) = f ( x ), & x \in [0,L]
	\end{cases}
	\end{align}
	with $\alpha = 5$, $L=6\pi$, $\tend=3/2$, and $f(x) = 0.4\cos\left(\frac{2\pi x}{L}\right)-0.3\cos\left(\frac{4 \pi x}{L}\right)$. Let $\bu=\bu(t,x)$ be the function represented in Figure~\ref{fig:SH}, and whose precise description in terms of Fourier-Chebyshev coefficients can be downloaded at~\cite{integratorcode}. Then, there exists a smooth solution $u$ of~\eqref{eq:SH} such that
\begin{align}
\label{eq:errorboundexample}
\sup_{t\in [0,\tend]} \sup_{x\in [0,L]} \vert u(t,x)-\bu(t,x) \vert \leq 
4\times 10^{-8}.
\end{align}
\end{theorem}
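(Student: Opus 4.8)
The plan is to realize Theorem~\ref{th:SH} as a concrete instance of the general Newton--Kantorovich scheme outlined in the introduction, applied to the Swift--Hohenberg equation~\eqref{eq:SH} after rescaling the spatial domain $[0,L]$ to $[0,2\pi]$ and the time interval to the form $(0,2\tau]$. In this rescaled coordinates the equation has the shape~\eqref{eq:PDE} with $R=2$, a cubic nonlinearity $g^{(0)}(u)=\alpha u - u^3$ (up to the rescaling constants) and $g^{(j)}=0$ for $j=1,2,3$, so the high-order dissipative term $-(\partial_x^2+1)^2$ indeed dominates and the abstract framework applies. The first concrete step is to fix the discretization: a truncation dimension in Fourier space, a number of Chebyshev nodes in time, and a partition of $[0,\tend]$ into subintervals for the domain decomposition. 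On this finite-dimensional space one computes numerically the approximate orbit $\bu$ whose Chebyshev$\times$Fourier coefficients are the ones posted at~\cite{integratorcode}.

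Next I would assemble the three ingredients the introduction singles out as pivotal. For the linear operator I would take $\L$ piecewise constant in time across the subintervals: on each piece, the diagonal (in Fourier space) part coming from a time-and-space average of $DH(\bu)$ on the high modes, glued to a finite-rank time-independent approximation of $DH(\bu)$ on the low modes, exactly as described in Section~\ref{sec:functional_setup} and Section~\ref{sec:domaindecomposition}. One then forms the zero-finding map $F$ built from the variation-of-constants formula, constructs the approximate inverse $A$ of $DF(\bu)$ (including the domain-decomposition twist from Section~\ref{sec:setup_DD}), and works in the Banach space carrying the piecewise-constant weighted norm $\|\cdot\|_\eta$ from~\eqref{e:def_norm_eta}. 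The heart of the proof is then to produce, with interval arithmetic, the three Newton--Kantorovich bounds: a residual bound $Y$ controlling $\|A F(\bu)\|_\eta$, a bound $Z_1$ controlling $\|A DF(\bu) - I\|$ (encoding both the quality of $\L$ as an approximation of $DH(\bu)$ and the contractivity of the semigroup on tail modes), and a second-order bound $Z_2$ controlling the Lipschitz constant of $A DF$ on a ball of radius $r$. The Banach fixed-point theorem then yields a true solution $u$ of the rescaled equation with $\|u-\bu\|_\eta \le r$ for any $r$ in the interval on which the radii polynomial $Y + (Z_1-1)r + Z_2 r^2$ is negative.

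To finish, I would convert the weighted-norm bound back to the sup-norm bound~\eqref{eq:errorboundexample}: since the Fourier--Chebyshev coefficient space used here embeds into a space of functions that are continuous in $(t,x)$ with an explicit embedding constant (the norm is designed so that $\|w\|_{C^0([0,\tend]\times[0,L])} \le \|w\|_\eta$, or differs from it by a computable factor coming from the time rescaling and the piecewise weights), the radius $r$ from the contraction directly gives the claimed $4\times 10^{-8}$, after checking that the rescaling of $x$ and $t$ and the relation between $\tau$ and $\tend$ are accounted for. Regularity (smoothness of $u$) follows from parabolic bootstrapping applied to the mild solution, as for the general theorem.

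The main obstacle I expect is not any single estimate but the interplay between the choice of $\L$ on the low modes, the length of the subintervals, and the size of $\alpha=5$: with $\alpha=5$ the operator $DH(\bu)$ has genuinely unstable directions, so the naive diagonal choice of $\L$ would fail, and one must both capture enough unstable modes in the finite-rank part of $\L$ and keep the subintervals short enough that the residual and $Z_2$ bounds stay small, while not making the domain decomposition so fine that $Z_1$ degrades through the gluing. Concretely, verifying that the radii polynomial actually has a negative interval --- i.e.\ that $Z_1<1$ after all the tail and interpolation error terms are included, and that $Y$ is small enough to reach $4\times 10^{-8}$ --- is where the bulk of the careful work (and the computational tuning mentioned in the introduction) lies; everything else is a specialization of the machinery developed in the body of the paper.
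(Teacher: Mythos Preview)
Your proposal is correct and follows essentially the same approach as the paper: the proof of Theorem~\ref{th:SH} is precisely an application of Theorem~\ref{thm:NewtonKantorovich} to the domain-decomposed map $F$ of Definition~\ref{eq:zeroMap_domaindecomp}, with the $Y$, $Z$, $W$ bounds of Sections~\ref{sec:bounds}--\ref{sec:domaindecomposition} evaluated in interval arithmetic (the paper uses $K=5$, $N=30$, $M=100$, see Remark~\ref{rem:sharper_error_bound}, and the file \texttt{ivpdataSwiftHohenberg1.m}). Your ``radii polynomial'' formulation with $Y,Z_1,Z_2$ is the $M=1$ version of the paper's componentwise inequalities~\eqref{e:inequalities1}--\eqref{e:inequalities2}, but the content is the same.
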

\begin{figure}[h!]
\centering
\includegraphics[width=0.49\linewidth]{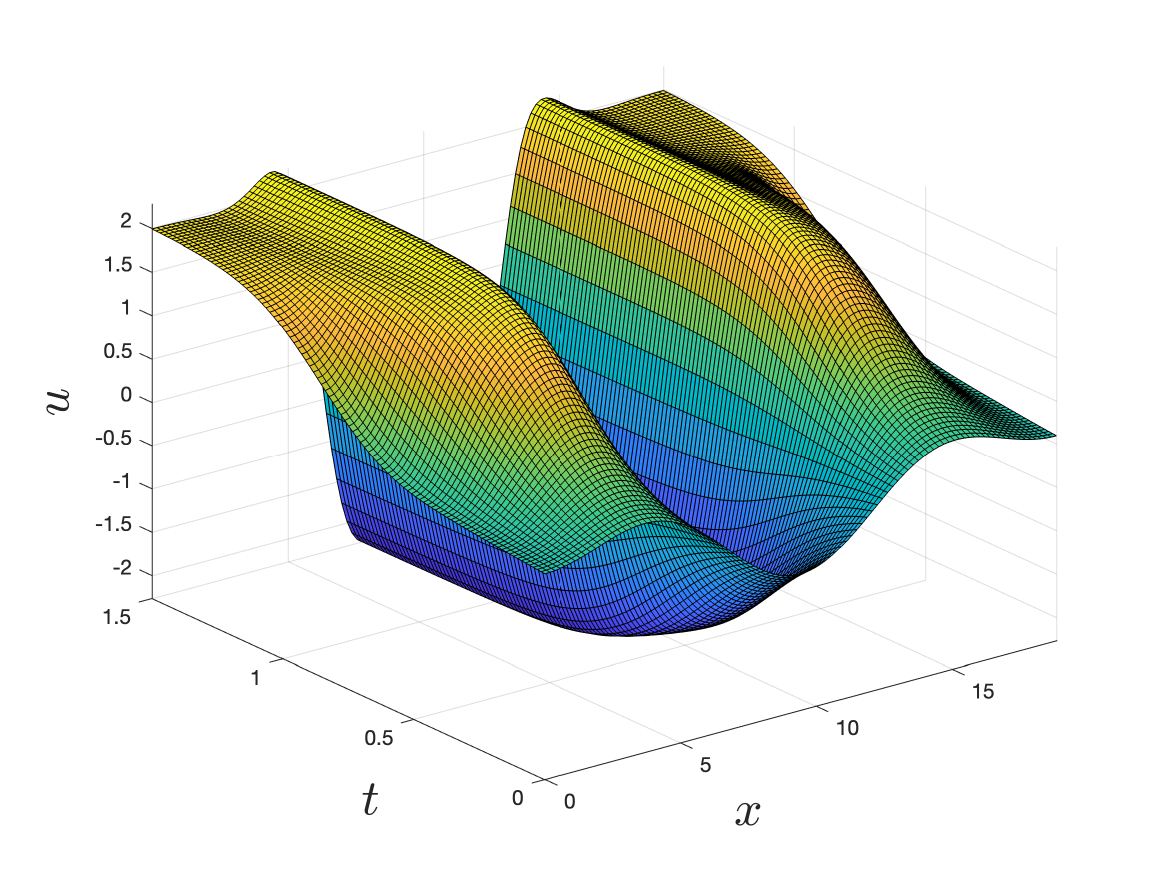}
\hfill
\includegraphics[width=0.49\linewidth]{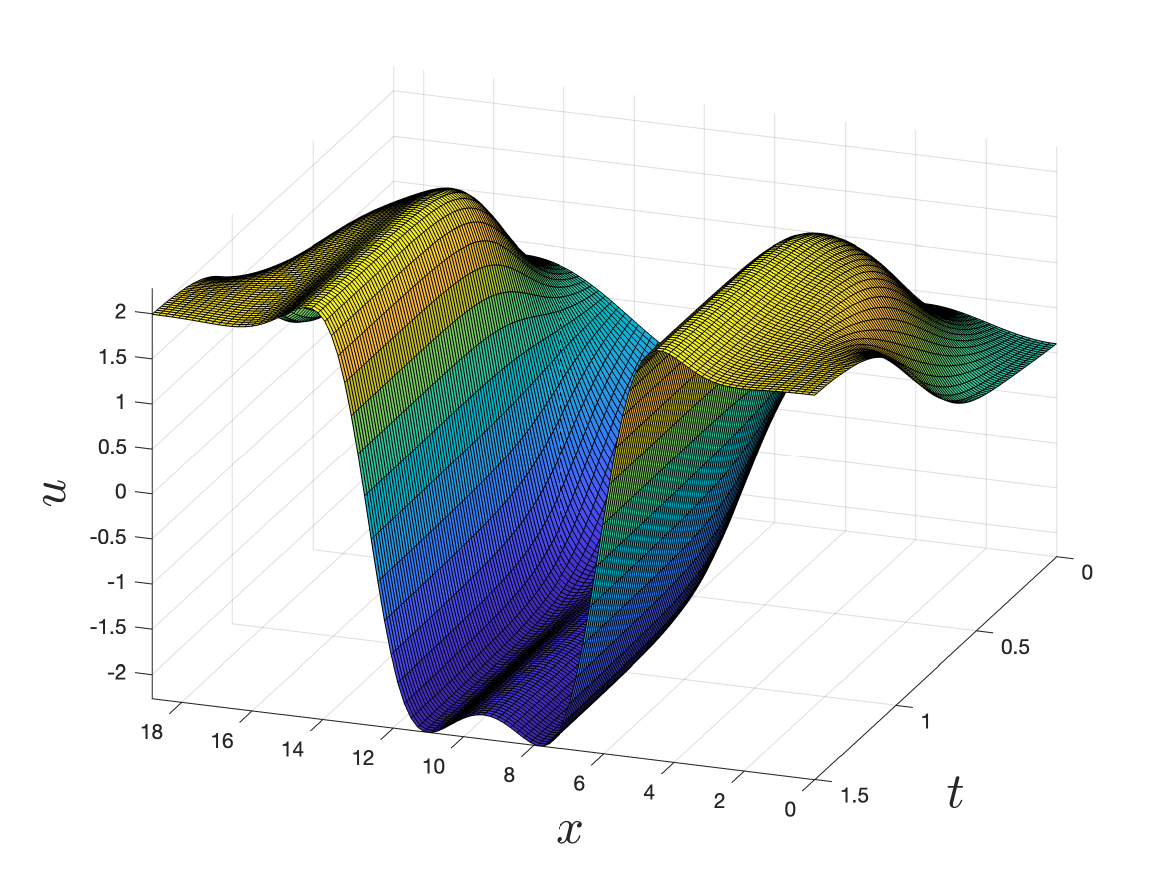}
\caption{The approximate solution $\bu$ of~\eqref{eq:SH}, which has been validated in Theorem~\ref{th:SH}, depicted twice with different views.}
\label{fig:SH}
\end{figure}
\begin{remark}
One of the key ingredients of our proof of Theorem~\ref{th:Fisher} is the selection of a proper function space and associated norm. 
The norm used in the proof is \emph{stronger} than the $C^0$-norm exhibited in the error bound~\eqref{eq:errorboundexample}, where we aimed for a simple statement rather than a sharp estimate. In particular, the solution $u$ obtained in the above theorem is in fact analytic in the space variable $x$, and the norm introduced later in the paper reflects that. Furthermore, the error bound obtained from the proof is not uniform in time, but we report the somewhat weaker resulting uniform bound in~\eqref{eq:errorboundexample}.
We refer to  Section~\ref{sec:preliminaries}, Section~\ref{sec:functional_setup} and Section~\ref{sec:domaindecomposition} for details (see also Remark~\ref{rem:Fisher}).  
\end{remark}

\medskip

Finally, let us mention a few possible extensions. The theory presented in this paper can be extended in a straightforward manner to deal with \emph{systems} of parabolic PDEs, to higher dimensional rectangular spatial domains, to any boundary conditions allowing for smooth Fourier series representation, and to complex valued solutions. Non-constant coefficients in the lower order terms (that is, everywhere except in the $\dfrac{ \partial u}{ \partial t} - (-1)^{R+1} \dfrac{ \partial^{2R} u}{ \partial x^{2R}}$ part) could also be handled. Regarding nonlinearities, we made some choices in~\eqref{eq:PDE} which are not fully general, for the sake of unifying the presentation. For instance, an equation like 
\begin{align}
\label{eq:KSnotnice}
\dfrac{ \partial u }{ \partial t} = 
			- \dfrac{ \partial ^{4} u }{ \partial x^{4} } - \dfrac{ \partial ^{2} u }{ \partial x^{2} }  - \left(\dfrac{ \partial  u }{ \partial x }\right)^2,
\end{align}
which is one version of the Kuramoto-Sivashinsky equation, cannot be written the form~\eqref{eq:PDE}, at least not without changing variables (in this case, the equation satisfied by $ \partial_x u$ can in fact be written in the form~\eqref{eq:PDE}). If one wanted to deal directly with~\eqref{eq:KSnotnice}, or more generally with equations of the form 
\begin{align*}
\dfrac{ \partial u}{ \partial t} = 
		(-1)^{R+1} \dfrac{ \partial^{2R} u}{ \partial x^{2R}} + g\left (u, \dfrac{ \partial u }{ \partial x }, \ldots, \dfrac{ \partial^{2R-1} u }{ \partial x^{2R-1} } \right),
\end{align*}
where $g$ is a (multivariate) polynomial, some of the estimates derived in this paper would have to be adapted, but in principle such equations could also be handled. The extension from initial to boundary value problems is a topic of ongoing research.

\medskip

The paper is organized as follows. In Section~\ref{sec:preliminaries}, we give some definitions and introduce notation related to sequences spaces and Chebyshev interpolation which will be useful throughout the paper, and state the fixed point Theorem~\ref{thm:NewtonKantorovich} that is at the heart of our validation procedure. In Section~\ref{sec:functional_setup}, we give a precise definition of the crucial operator $\L$ that we use in this work, and describe the reformulation of~\eqref{eq:PDE} leading to the fixed point problem to which we want to apply Theorem~\ref{thm:NewtonKantorovich}.
Here we initially restrict attention to a \emph{single} time domain, postponing the domain decomposition to Section~\ref{sec:domaindecomposition}. In Section~\ref{sec:general_estimates}, we collect several technical lemmas, which are then used in Section~\ref{sec:bounds}, where we derive all the estimates required to apply Theorem~\ref{thm:NewtonKantorovich} to the reformulation introduced in Section~\ref{sec:functional_setup}. In Section~\ref{sec:domaindecomposition}, we describe how to modify the setup and the already obtained bounds in order to incorporate the domain decomposition approach in time. Finally, in Section~\ref{sec:examples} we present several examples for the Fisher-KPP equation, the Swift-Hohenberg equation, the Otha-Kawasaki equation and the Kuramoto-Sivashinsky equation, which illustrate the advantages provided by our choice of $\L$ and by the domain decomposition. Appendix~\ref{app:timestepping} contains details about time stepping, Appendix~\ref{app:interp_err} describes new sharp interpolation error estimates, Appendix~\ref{app:quadrature} specifies how we rigorously enclose some integrals, and Appendix~\ref{app:chi} provides an algorithm to explicitly compute some constants introduced in Section~\ref{sec:general_estimates}.

All the computations presented in this paper have been implemented in \textsc{Matlab}, using the phenomenal \textsc{Intlab} package 
\cite{Intlab} for interval arithmetic. The computer-assisted parts of the proofs can be reproduced using the code available at~\cite{integratorcode}.

\section{Preliminaries}
\label{sec:preliminaries}
In this section we introduce notation and provide the necessary background for the tools
used in this paper. 

\subsection{Sequence spaces}
The functional analytic reformulation of (\ref{eq:PDE}) in terms of the Fourier coefficients is posed on a space of continuous functions from $[-1,1]$ into a space of geometrically decaying sequences
\begin{align*}
	\ell^{1}_{\nu} \bydef  \biggl\{ a \in \CC^{\ZZ} : 
				\sum_{n\in\ZZ} \left \vert a_{n} \right \vert \nu^{\vert n\vert} < \infty \biggr\} ,
\end{align*}
endowed with the norm 
$\left \Vert a \right \Vert_{\ell^1_\nu} = 
\sum_{n\in\ZZ}  \left \vert a_{n} \right \vert \nu^{\vert n\vert}$,
where $\nu\geq 1$ is some decay rate to be chosen later. We define 
$\mathcal{X}_{\nu} \bydef C \left( \left[-1,1\right], \ell^{1}_{\nu} \right)$ to be the space of continuous functions from $[-1,1]$ into 
$\ell^{1}_{\nu}$. 

Recall that the Fourier coefficients of the product of two Fourier expansions is given by
the discrete convolution: when $u$ and $v$ are $2\pi$-periodic functions given by
$u(x) = \sum_{n\in\ZZ} \hat u_{n} e^{inx}$
and \
$v(x) = \sum_{n\in\ZZ} \hat v_{n} e^{inx}$,
then
\begin{align*}
	\left( u v \right) (x) = 
	 \sum_{n\in\ZZ} \left(\hat u \ast \hat v\right)_{n} e^{inx},  
\qquad\text{where}\qquad
	\left( \hat u \ast \hat v \right)_{n} \bydef  \sum_{m \in \ZZ} \hat u_m \hat v_{n-m},
	\quad n \in \ZZ,
\end{align*}
and these formal computations are justified as soon as $u$ and $v$ are smooth enough, say Lipschitz-continuous, which will be the case in this work.

\begin{remark}
In order to simplify the notation, in the rest of this paper we will use the same symbol to denote a function and its (discrete) Fourier transform, that is we write
\begin{equation*}
u(x) = \sum_{n\in\ZZ} u_{n} e^{inx}.
\end{equation*}
It should be clear from the context whether $u$ denotes the function $x\mapsto u(x)$ or the sequence of Fourier coefficients $u=\left(u_n\right)_{n\in\ZZ}$.
\end{remark}

We also recall that the discrete convolution gives $\ell^{1}_{\nu}$ a Banach algebra structure.
\begin{lemma}
	\label{prop:BanachAlgebra}
For all $u,v\in\ell^1_\nu$, $u\ast v \in\ell^1_\nu$ and
$\left \Vert u\ast v \right \Vert_{\ell^1_\nu} \leq \left \Vert u \right \Vert_{\ell^1_\nu} \left \Vert v \right \Vert_{\ell^1_\nu}$.
\end{lemma}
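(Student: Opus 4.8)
The statement to prove is the Banach algebra property of $\ell^1_\nu$ under discrete convolution, i.e. Lemma~\ref{prop:BanachAlgebra}.

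\medskip

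The plan is to estimate $\left\Vert u \ast v \right\Vert_{\ell^1_\nu}$ directly from the definition by a double sum manipulation, using only the triangle inequality, Tonelli's theorem for interchanging the order of summation of nonnegative terms, and the submultiplicativity of the weight $\nu^{|n|}$. First I would write out
\begin{align*}
\left\Vert u \ast v \right\Vert_{\ell^1_\nu}
= \sum_{n\in\ZZ} \left\vert \sum_{m\in\ZZ} u_m v_{n-m} \right\vert \nu^{|n|}
\leq \sum_{n\in\ZZ} \sum_{m\in\ZZ} |u_m|\,|v_{n-m}|\,\nu^{|n|},
\end{align*}
where the inequality is the triangle inequality applied inside the modulus. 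The key elementary observation is the subadditivity of absolute value in the exponent, $|n| = |m + (n-m)| \leq |m| + |n-m|$, which together with $\nu \geq 1$ gives $\nu^{|n|} \leq \nu^{|m|}\nu^{|n-m|}$. Substituting this bound and using that all terms are nonnegative (so Tonelli applies and the order of summation is irrelevant), I would reindex the inner sum by $k = n-m$ to factor the double sum:
\begin{align*}
\sum_{n\in\ZZ} \sum_{m\in\ZZ} |u_m|\,|v_{n-m}|\,\nu^{|m|}\nu^{|n-m|}
= \left( \sum_{m\in\ZZ} |u_m|\,\nu^{|m|}\right)\left( \sum_{k\in\ZZ} |v_k|\,\nu^{|k|}\right)
= \left\Vert u \right\Vert_{\ell^1_\nu}\left\Vert v \right\Vert_{\ell^1_\nu}.
\end{align*}
In particular this chain of inequalities shows the right-hand side is finite, which also establishes $u \ast v \in \ell^1_\nu$, so both assertions of the lemma follow at once.

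\medskip

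There is no real obstacle here: the only point requiring the hypothesis $\nu \geq 1$ is the step $\nu^{|n|} \leq \nu^{|m|}\nu^{|n-m|}$, and the only mildly non-trivial analytic fact is the justification of interchanging/reindexing the double sum, which is legitimate precisely because every summand is nonnegative (Tonelli's theorem for counting measure on $\ZZ^2$). One could alternatively argue by first noting that the map $n \mapsto \nu^{|n|}$ is a submultiplicative weight and invoking the standard fact that a weighted $\ell^1$ space on a group with a submultiplicative weight is a Banach algebra under convolution; but the direct computation above is short enough that spelling it out is preferable. I would present the direct version.
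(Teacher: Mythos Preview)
Your proof is correct and is the standard argument. The paper itself does not supply a proof of this lemma: it merely states it as a recalled fact (``We also recall that the discrete convolution gives $\ell^{1}_{\nu}$ a Banach algebra structure''), so there is nothing to compare against beyond noting that your direct computation is exactly the classical one.
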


\subsection{Chebyshev interpolation}
\label{sec:cheb_interp}
In this section we recall the basics of Chebyshev interpolation. The reader is referred to 
\cite{Cheney, Rivlin, ApproximationTheory} for a comprehensive introduction into the theory of Chebyshev approximation.  

\begin{definition}[Chebyshev points]
	\label{def:chebpoints}
	Let $K \in \NN_{\geq 1}$. The $K$-th order Chebyshev points $\left( t^{K}_{k} \right)_{k=0}^{K}$ 
	are defined by $t^{K}_{k} \bydef  \cos (\frac{\pi k}{K}) $. 
\end{definition}
\begin{remark}
	We shall omit the superscript $K$ from the notation whenever it can be easily inferred
	from the context. We have ordered the Chebyshev points from $1$ to $-1$, i.e., 
	$t_{0} = 1$ and $t_{K} = -1$. In the literature the points $\left(t_{k} \right)_{k=0}^{K}$ are often referred to as the Chebyshev points of the second kind. 
\end{remark}

We will refer to the $K$-th order polynomial which interpolates a continuous function $f: [-1,1] \rightarrow \CC$
at the Chebyshev points $\left( t_{k} \right)_{k=0}^{K}$ as the $K$-th order Chebyshev interpolant of $f$. 
Furthermore, we shall denote the operator which sends a continuous function $f$ to its $K$-th order Chebyshev
interpolant by $P_{K} : C([-1,1],\CC) \rightarrow C([-1,1],\CC)$. 
We note that $f-P_K(f)$ vanishes at $t=\pm 1$ for any $K$ and any $f$, because we have chosen a family of Chebyshev nodes containing the endpoints of the interval $[-1,1]$. We denote the space $C([-1,1],\CC)$ with the $\sup$-norm by $C^0$ throughout.
Our approximate solutions and our analysis makes use of Chebyshev interpolation in time, and we therefore need to control the associated interpolation errors.
\begin{definition}[$C^l$ interpolation error]
\label{def:interp_cste}
For $K\in\NN_{\geq 1}$ and $l\in\{0,1,\ldots,K\}$ we denote by $\sigma_{K,l}$ a real constant such that, for all $f\in C^{l+1}([-1,1],\CC)$,
\begin{align*}
		\left \Vert f - P_{K}(f)  \right \Vert_{C^{0}} \leq 
		\sigma_{K,l} \left \Vert f^{(l+1)} \right \Vert_{C^{0}}.
	\end{align*}
\end{definition}
In practice, we need an explicit value for this constant. One can easily find explicit values in the literature, at least for real-valued functions, but not necessarily optimal ones. In Appendix~\ref{app:interp_err}, we recall some known results about this constant, and provide almost optimal values for $\sigma_{K,0}$ and $K$ small, which is a crucial case for this work.

Another case which we will encounter frequently is when the function to be interpolated is analytic in an open set of the complex plane containing the segment $[-1,1]$. In such a situation, the following error estimate applies.

\begin{definition}
For $\rho\geq 1$, we denote by $\mathcal{E}_{\rho} \subset \CC$ the open ellipse with foci $\pm 1$ such that the length of the semi-major and of the semi-minor axes sum up to $\rho$, or equivalently, the set of all complex numbers $z$ such that $\vert z-1\vert + \vert z+1\vert < \rho+\rho^{-1}$. The set $\mathcal{E}_\rho$ is sometimes referred to as a \emph{Bernstein ellipse}. Notice that $\rho=1$ is a degenerate case: $\mathcal{E}_1=(-1,1)$.
\end{definition}
\begin{theorem}
	\label{thm:analytic_intp_error}
	Let $\rho>1$. Suppose $f: \left[ -1, 1\right] \rightarrow \CC$ can be analytically extended to $\mathcal{E}_{\rho}$, and is bounded on $\mathcal{E}_{\rho}$. Then 
	\begin{align*}
		\left \Vert f - P_{K}(f) \right \Vert_{C^{0}} \leq \frac{4\rho^{-K}}{\rho-1} \sup_{z \in \mathcal{E}_{\rho}} \left \vert f (z) \right \vert,
	\end{align*}
	for any $K \in \NN$. 
\end{theorem}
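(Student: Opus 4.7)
The plan is to prove Theorem~\ref{thm:analytic_intp_error} in the classical way, by combining the rapid decay of the Chebyshev coefficients of an analytic $f$ with an aliasing argument for the interpolant $P_K(f)$.

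First, I would set up the Chebyshev expansion. Denote by $T_n(x) = \cos(n \arccos x)$ the Chebyshev polynomials of the first kind, and write
\begin{equation*}
f(x) = \sum_{n=0}^{\infty} a_n T_n(x), \qquad a_n = \frac{2-\delta_{n,0}}{\pi} \int_{-1}^{1} \frac{f(x) T_n(x)}{\sqrt{1-x^2}}\,\d x.
\end{equation*}
The core estimate is the following coefficient bound: for $n \geq 1$,
\begin{equation*}
\vert a_n \vert \leq 2 M \rho^{-n}, \qquad M \bydef \sup_{z\in\mathcal{E}_\rho} \vert f(z)\vert.
\end{equation*}
To get this I would use the Joukowski change of variables $x = \tfrac{1}{2}(w + w^{-1})$, which maps the circle $\vert w\vert = \rho$ bijectively onto $\partial\mathcal{E}_\rho$ and turns $T_n(x)$ into $\tfrac{1}{2}(w^n + w^{-n})$. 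Substituting and deforming the contour from the unit circle to $\vert w\vert = \rho$ (which is legitimate since $f$ is analytic in the image of the annulus), the integral defining $a_n$ becomes a Laurent-type integral, and the standard $M$-$L$ estimate on $\vert w\vert = \rho$ yields $\vert a_n\vert \leq 2M\rho^{-n}$.

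Next I would handle the interpolant via \emph{aliasing}. Because $T_n(t^K_k) = \cos(\pi n k/K)$ depends on $n$ only modulo $2K$ and is symmetric under $n \mapsto -n$, for every integer $n$ there is a unique $n' \in \{0,1,\dots,K\}$ with $n \equiv \pm n' \pmod{2K}$, and $T_n$ and $T_{n'}$ agree on all Chebyshev nodes $t^K_0,\dots,t^K_K$. Since $P_K$ is the unique polynomial interpolant of degree $\leq K$, this gives $P_K T_n = T_{n'}$. In particular $\Vert P_K T_n\Vert_{C^0} \leq 1$ and therefore $\Vert T_n - P_K T_n\Vert_{C^0} \leq 2$ for all $n \geq K+1$.

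Finally, since $P_K$ reproduces polynomials of degree $\leq K$, the partial sum up to $n=K$ is fixed and
\begin{equation*}
f - P_K(f) = \sum_{n=K+1}^{\infty} a_n \bigl(T_n - P_K T_n\bigr),
\end{equation*}
where the series converges uniformly thanks to the geometric bound on $\vert a_n\vert$ (which also justifies exchanging sum and $P_K$, using continuity of $P_K$ on $C^0$ over any finite interpolation grid). The triangle inequality then gives
\begin{equation*}
\Vert f - P_K(f)\Vert_{C^0} \leq \sum_{n=K+1}^{\infty} \vert a_n\vert \cdot 2 \leq 4M \sum_{n=K+1}^{\infty} \rho^{-n} = \frac{4M \rho^{-K-1}}{1-\rho^{-1}} = \frac{4\rho^{-K}}{\rho-1}\sup_{z\in\mathcal{E}_\rho}\vert f(z)\vert,
\end{equation*}
which is the claimed estimate. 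The main obstacle is the coefficient bound step: one needs to check carefully that the contour can be moved to $\vert w\vert = \rho$ (using boundedness rather than continuity on the closed ellipse) and to track the factor of $2$ correctly; everything else is a routine assembly.
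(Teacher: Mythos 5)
Your proof is correct and is essentially the argument of the reference the paper cites for this result (\cite[Theorem 8.2]{ApproximationTheory}, whose proof combines the geometric decay $\vert a_n\vert \leq 2M\rho^{-n}$ of the Chebyshev coefficients with the aliasing identity $P_K T_n = T_{n'}$ to bound $\Vert T_n - P_K T_n\Vert_{C^0}\leq 2$ and sum the tail). The paper itself gives no proof beyond that citation, so there is nothing to compare beyond noting that your route is the standard one.
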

\noindent The proof can be found in~\cite[Theorem $8.2$]{ApproximationTheory}.

In practice, we compute and represent interpolation polynomials using the Chebyshev basis $\left( T_{k} \right)_{k \in \NN_{0}}$.
\begin{definition}
	\label{def:Tk}
	The Chebyshev polynomials $T_{k} : \left[-1,1\right] \rightarrow \RR$ are defined by 
	the relation $T_{k} \left( \cos \theta \right) = \cos \left( k \theta \right)$, where $k \in \NN_{0}$ and
	$\theta \in \left[0,\pi\right]$. 
\end{definition}
\begin{remark}
	In the literature the polynomials $\left( T_{k} \right)_{k \in \NN_{0}}$ are often referred to as the Chebyshev polynomials of the first kind. The $K$-th order Chebyshev points $\left( t^{K}_{k} \right)_{k=0}^{K}$ 	are the points in $\left[-1,1\right]$ at which $T_{K}$ attains its extrema. Furthermore, note that $T_{k} \left( t^{K}_{j} \right) = \cos ( \frac{ \pi jk}{K} )$ for $k,j \in \NN_{0}$. 
\end{remark}
The polynomials $\left( T_{k} \right)_{k=0}^{K}$
constitute a basis for the space of $K$-th order polynomials $\mathbb{P}_{K}$. Hence, any polynomial $P$ of degree at most $K$, and in particular any interpolation polynomial $P_K(f)$, can be uniquely written as
\begin{align}
\label{eq:cheb_exp}
P = P_0 + 2 \sum_{k=1}^K P_k T_k,
\end{align}
for some Chebyshev coefficients  $P_0,\ldots,P_K \in\CC$.

For any given polynomial written in the Chebyshev basis, we can easily control both its $C^0$ norm and its supremum on a Bernstein ellipse $\mathcal{E}_\rho$, in terms of its Chebyshev coefficients.
\begin{lemma}
\label{lem:upperbounds_norms}
Let $K \in \NN_{\geq 0}$ and $P = P_0 + 2 \sum_{k=1}^K P_k T_k$, then
	\begin{align}
		\label{eq:C0_norm}
		\left \Vert P \right \Vert_{C^{0}} \leq 
		\left \vert P_{0} \right \vert + 2 \sum_{k=1}^{K} \left \vert P_{k} \right \vert,
	\end{align}
	and
	\begin{align}
	\label{eq:Crho_norm}
		\sup_{z\in\mathcal{E}_\rho} \left\vert P(z)\right\vert  \leq 
		\left \vert P_{0} \right \vert + \sum_{k=1}^{K} \left \vert P_{k} \right \vert \left(\rho^k+\rho^{-k}\right). 
		\end{align}
\end{lemma}
\begin{proof}
Any element $z$ of $\mathcal{E}_\rho$ can be written $z=\frac{\omega+\omega^{-1}}{2}$ for some $\omega\in\CC$ such that $1\leq \vert\omega\vert\leq \rho$, and we have the identity
$T_k\bigl( \frac{\omega+\omega^{-1}}{2}\bigr) = \frac{\omega^k+\omega^{-k}}{2}$,
which holds because both sides are analytic and coincide for $\omega$ on the unit circle. Hence
$\sup_{z\in\mathcal{E}_\rho} \left\vert T_k(z)\right\vert = \frac{\rho^k+\rho^{-k}}{2}$,
which yields~\eqref{eq:Crho_norm}.
\end{proof}
\begin{remark}
	Note that \eqref{eq:cheb_exp} is, up to the coordinate transformation $x=\cos\theta$, a Fourier cosine series.
	This is the motivation for using the factor $2$ in front of the coefficients $(P_{k})_{k=1}^{K}$. 
	In particular, with this convention the Chebyshev coefficients of the product of two Chebyshev expansions can be computed directly (i.e., without a rescaling factor) via the discrete convolution. 
\end{remark}

\subsection{A fixed point Theorem}
\label{sec:NewtonKantorovich}

Let $(X^m,\|\cdot\|_{X^m})_{m=1}^M$ be Banach spaces, $X=\Pi_{m=1}^M X^m$ the product space, and $\pi^m:X\to X^m$ the projections onto the components. Let $\rstar = (\rstar^m)_{m=1}^M \in \RR_{>0}^M$ and $\bar{x} \in X$. For any $r\in \RR_{>0}^M$, we define $\bbox(\bar{x},r) = \{ x \in X :  \left\Vert\pi^{m}(x-\bar{x}) \right\Vert_{X^m} \leq r^m \text{ for } 1\leq m\leq M\}$. We consider a map $T \in C^1(\bbox(\bar{x},r),X)$. For $r,\rstar \in \RR_{>0}^M$ we say that $r \leq \rstar$ if $r^m \leq \rstar^m$ for all $1\leq m\leq M$. Finally, we denote partial Fr\'echet derivatives by $D_i$. 

The following statement, based on the Banach fixed point Theorem, provides explicit conditions under which $T$ has a unique fixed point in $\bbox(\bar{x},r)$ (for some explicit $r$). Many similar versions of this theorem have been used in the last decades for computer-assisted proofs, see e.g.~\cite{MR2728184,MR2338393,Gom19,NakPluWat19,Ois95,BerLes15,Yamamoto}. The one we use here, with possibly different radii in each component, originates from~\cite{Ber17}. 

\begin{theorem}
\label{thm:NewtonKantorovich}
Assume that $Y^m \geq 0$, $Z^m_i \geq 0$, $W^m_{ij} \geq 0$ for $1 \leq  i,j,m \leq M$ satisfy
\begin{alignat}{2}
	\left\Vert\pi^m (T(\bar{x})-\bar{x}) \right\Vert_{X^m} &\leq Y^m, \label{e:def_Y}\\
	\left\Vert\pi^m D_i T(\bar{x}) \right\Vert_{B(X^i,X^m)} &\leq Z^m_i, \label{e:def_Z}\\
	\left\Vert\pi^m (D_i T(x)- D_i T(\bar{x})) \right\Vert_{B(X^i,X^m)} &\leq \sum_{j=1}^M W^m_{ij} \left\Vert \pi^j(x-\bar{x}) \right\Vert_{X^j}
	&\quad&\text{for all } x\in \bbox(\bar{x},\rstar). \label{e:def_W}
\end{alignat}
If $r,\eta \in \RR_{>0}^M$ with $r \leq \rstar$ satisfy
\begin{alignat}{1}
	Y^m + \sum_{i=1}^M Z^m_{i} r^i  
	+ \frac{1}{2}\sum_{i,j=1}^M W^m_{ij} r^i r^j  & \leq r^m 
	\label{e:inequalities1} \\
	 \sum_{i=1}^M Z^m_{i} \eta^i
	 + \sum_{i,j=1}^M W^m_{ij} \eta^i r^j  & < \eta^m
	 \label{e:inequalities2} 
\end{alignat}
for $1\leq m\leq M$,
then $T$ has a unique fixed point in $\bbox(\bar{x},r)$.
\end{theorem}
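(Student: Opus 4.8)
The plan is to apply the Banach fixed point theorem to $T$ restricted to the closed set $\bbox(\bar x,r)$, which is a complete metric space when equipped with the metric induced by any of the equivalent norms on $X$ (in particular the weighted norm $\|x\|_\eta = \max_m \eta^{-m}\|\pi^m x\|_{X^m}$ for the parameter $\eta$ appearing in~\eqref{e:inequalities2}). Two things must be verified: that $T$ maps $\bbox(\bar x,r)$ into itself, and that $T$ is a contraction on $\bbox(\bar x,r)$ in the $\eta$-weighted norm. Both follow from a mean-value/Taylor estimate together with the defining inequalities for $Y^m$, $Z^m_i$, $W^m_{ij}$, and the smallness conditions~\eqref{e:inequalities1} and~\eqref{e:inequalities2}.

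For the self-mapping property, fix $x \in \bbox(\bar x,r)$ and estimate $\pi^m(T(x)-\bar x)$ componentwise. I would write $T(x)-\bar x = \bigl(T(\bar x)-\bar x\bigr) + \bigl(T(x)-T(\bar x)\bigr)$ and expand the second term via the fundamental theorem of calculus along the segment from $\bar x$ to $x$:
\begin{align*}
T(x)-T(\bar x) = \int_0^1 DT\bigl(\bar x + s(x-\bar x)\bigr)(x-\bar x)\,\d s = \sum_{i=1}^M \int_0^1 D_i T\bigl(\bar x + s(x-\bar x)\bigr)\bigl(\pi^i(x-\bar x)\bigr)\,\d s .
\end{align*}
Inserting and subtracting $D_i T(\bar x)$ inside the integral, applying $\pi^m$, taking norms, and using~\eqref{e:def_Y}, \eqref{e:def_Z}, \eqref{e:def_W} (the latter with $\bar x + s(x-\bar x) \in \bbox(\bar x,\rstar)$, which holds because $r \le \rstar$ and the box is convex), together with $\|\pi^i(x-\bar x)\|_{X^i} \le r^i$, yields
\begin{align*}
\left\Vert \pi^m(T(x)-\bar x) \right\Vert_{X^m} \le Y^m + \sum_{i=1}^M Z^m_i r^i + \frac12 \sum_{i,j=1}^M W^m_{ij} r^i r^j ,
\end{align*}
where the factor $\tfrac12$ comes from $\int_0^1 s\,\d s$. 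By~\eqref{e:inequalities1} this is $\le r^m$, so $T(x)\in\bbox(\bar x,r)$.

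For the contraction property, take $x,y \in \bbox(\bar x,r)$ and similarly write $T(x)-T(y) = \sum_i \int_0^1 D_i T\bigl(y+s(x-y)\bigr)\bigl(\pi^i(x-y)\bigr)\,\d s$, again insert and subtract $D_i T(\bar x)$, and apply~\eqref{e:def_Z} and~\eqref{e:def_W}; here the points $y+s(x-y)$ again lie in $\bbox(\bar x,\rstar)$ by convexity, and $\|\pi^j(y+s(x-y)-\bar x)\|_{X^j} \le r^j$. This gives
\begin{align*}
\left\Vert \pi^m(T(x)-T(y)) \right\Vert_{X^m} \le \sum_{i=1}^M \Bigl( Z^m_i + \sum_{j=1}^M W^m_{ij} r^j \Bigr) \left\Vert \pi^i(x-y) \right\Vert_{X^i} \le \sum_{i=1}^M \Bigl( Z^m_i + \sum_{j=1}^M W^m_{ij} r^j \Bigr) \eta^i \, \|x-y\|_\eta .
\end{align*}
Dividing by $\eta^m$ and taking the maximum over $m$, condition~\eqref{e:inequalities2} gives a contraction constant $\kappa = \max_m \eta^{-m}\sum_i (Z^m_i + \sum_j W^m_{ij}r^j)\eta^i < 1$. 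The Banach fixed point theorem then delivers a unique fixed point of $T$ in $\bbox(\bar x,r)$, completing the proof. The only genuinely delicate point is the bookkeeping in the multi-component estimates — making sure the interpolation of $D_iT(\bar x)$ is done correctly so that $Z^m_i$ contributes linearly and $W^m_{ij}$ quadratically in $r$, and that the weighted-norm contraction constant is exactly the left-hand side of~\eqref{e:inequalities2}; there is no analytic obstacle beyond this.
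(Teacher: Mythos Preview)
Your proof is correct and follows essentially the same approach as the paper: a mean-value expansion to show $T$ maps $\bbox(\bar x,r)$ into itself via~\eqref{e:inequalities1}, and a bound on $DT$ in the $\eta$-weighted max norm to obtain the contraction via~\eqref{e:inequalities2}. The paper's version is terser (it bounds $\|DT(x)\|_{B(X,X)}$ directly rather than writing out the Lipschitz estimate for $T(x)-T(y)$), but the content is identical.
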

\begin{proof}
Writing
\begin{align*}
T(x) - \bar{x} = T(\bar{x})-\bar{x} + DT(\bar{x})(x-\bar{x}) + \int_0^1 \left[DT\left(\bar{x} + s (x-\bar{x})\right) - DT\left(\bar{x}\right) \right](x-\bar{x})\, \d s,
\end{align*}
the inequalities~\eqref{e:inequalities1} imply that $T$ maps $\bbox(\bar{x},r)$ into itself. Then, writing
\begin{align*}
DT(x) = DT(\bar{x}) + \left[DT\left(x\right) - DT\left(\bar{x}\right) \right],
\end{align*}
and considering the weighted maximum norm
\begin{equation}
\label{e:def_norm_eta}
  \left\Vert x \right\Vert_X = \max_{1\leq m \leq M} \frac{\left\Vert\pi^m x \right\Vert_{X^m}}{\eta^m}, 
\end{equation}
we get, for all $x$ in $\bbox(\bar{x},\rstar)$,
\begin{align*}
\left\Vert D T(x) \right\Vert_{B(X,X)} \leq \max_{1\leq m\leq M} \frac{1}{\eta^m} \sum_{i=1}^M \biggl( Z^m_{i}  + \sum_{j=1}^M W^m_{ij}  r^j \biggr) \eta^i, 
\end{align*}
and \eqref{e:inequalities2} yields that $T$ is a contraction on 
$\bbox(\bar{x},r) \subset X$ (which is a closed set for $\left\Vert \cdot \right\Vert_X$).
\end{proof}

\begin{remark}
If $M=1$, we recover a more \emph{classical} form of this statement, since $\eta$ can be factored out in~\eqref{e:inequalities2} and therefore plays no role. In that case, $\left\Vert \cdot \right\Vert_X = \left\Vert \cdot \right\Vert_{X^1}$, and the set $\bbox(\bar{x},r)$ is simply a ball for this norm.

There are various ways one could get similar but arguably simpler versions of Theorem~\ref{thm:NewtonKantorovich}. For example, by imposing $\eta = r$, i.e. replacing the conditions~\eqref{e:inequalities1} and~\eqref{e:inequalities2} by
\begin{alignat}{1}
	Y^m + \sum_{i=1}^M Z^m_{i} r^i  
	+ \frac{1}{2}\sum_{i,j=1}^M W^m_{ij} r^i r^j  & \leq r^m  ,
	\label{e:inequalities1bis} \\
	 \sum_{i=1}^M Z^m_{i} r^i
	 + \sum_{i,j=1}^M W^m_{ij} r^i r^j  & < r^m ,
	 \label{e:inequalities2bis} 
\end{alignat}
for $1\leq m\leq M$, or by simply asking that
\begin{align}
\label{e:inequalitiesno12} 
Y^m + \sum_{i=1}^M Z^m_{i} r^i  
	+ \sum_{i,j=1}^M W^m_{ij} r^i r^j  & \leq r^m ,
\end{align}
for $1\leq m\leq M$, which then implies~\eqref{e:inequalities1bis} and~\eqref{e:inequalities2bis}. The downside of these simplifications is that they lead to conditions on the constants $Y^m$, $Z^m_i$ and $W^m_{i,j}$ that are more stringent than~\eqref{e:inequalities1}-\eqref{e:inequalities2}. 

For many computer-assisted proofs, most of the effort goes into actually getting as sharp as possible constants $Y^m$, $Z^m_i$ and $W^m_{i,j}$ satisfying~\eqref{e:def_Y}-\eqref{e:def_W}, and this work is no exception. Once these constants are obtained, we feel that it is somewhat wasteful to use simpler but more stringent forms of Theorem~\ref{thm:NewtonKantorovich}. When the proof is not ``tight'', it is likely that~\eqref{e:inequalities1}-\eqref{e:inequalities2}, \eqref{e:inequalities1bis}-\eqref{e:inequalities2bis} and~\eqref{e:inequalitiesno12} can all be satisfied. However, for the examples presented in Theorem~\ref{th:Fisher}, Theorem~\ref{th:SH_less_precise} and Theorem~\ref{th:OK}, for which we did successfully verify~\eqref{e:inequalities1}-\eqref{e:inequalities2}, there is no $r$ such that either~\eqref{e:inequalitiesno12} or even~\eqref{e:inequalities1bis}-\eqref{e:inequalities2bis} holds, which means the proof would have failed if we had tried to use these simplified version of Theorem~\ref{thm:NewtonKantorovich}.
\end{remark}

In the sequel, we use this theorem with $M=1$ in the case where the time interval $[0,2\tau]$ is not split, and with $M>1$ when using domain decomposition. In Section~\ref{sec:functional_setup}, we introduce in full detail the setup in which we apply this theorem to validate solutions of~\eqref{eq:PDE}. The remainder of the paper is then devoted to the derivation of (computable) bounds $Y^m$, $Z^m_i$ and $W^m_{ij}$ satisfying~\eqref{e:def_Y},~\eqref{e:def_Z} and~\eqref{e:def_W} for this problem.

\section{Functional analytic setup}
\label{sec:functional_setup}

In this section we construct a fixed point map whose fixed points correspond to solutions of \eqref{eq:PDE}. Here we consider a \emph{single} time domain. The generalization to domain decomposition is presented in Section~\ref{sec:domaindecomposition}.
First, we 
recast \eqref{eq:PDE} into an infinite set of coupled ODEs on the sequence space $\ell^{1}_{\nu}$ by using a Fourier transformation
in the spatial variable. We then reformulate the system of ODEs as an equivalent zero finding problem on $\mathcal{X}_{\nu}$ by using the variation of 
constants formula. Next, we perform a finite dimensional reduction 
by approximating a finite number of (time varying) Fourier modes with the aid of Chebyshev interpolation. This reduction is
used to set up a Newton-like map $T$ based at an approximate (numerically computed) zero, which is the one on which Theorem~\ref{thm:NewtonKantorovich} will be applied.

\subsection{An equivalent zero-finding problem}
\label{sec:zeroproblem}
In this section we set up a zero finding problem for \eqref{eq:PDE} by using the variation of constants formula. 
First, rewrite \eqref{eq:PDE} as 
\begin{align}
\label{eq:PDE_with_L}
	\dfrac{ \partial u}{ \partial t} -\L u  &=  (-1)^{R+1} \dfrac{ \partial^{2R} u}{ \partial x^{2R}} + \displaystyle\sum_{j=0}^{2R-1} \dfrac{\partial^{j} g^{(j)} (u)}{\partial x^j} -\L u,
\end{align}
where $\L$ is the linear operator which is going to be the generator of the semigroup used in the Duhamel formula. The choice of $\L$ is actually crucial in our approach: we want $\L$ to make the linearization of the right-hand side of~\eqref{eq:PDE_with_L} around the approximation solution as a small as possible, but we also want to keep $\L$ relatively simple, so that we can easily get explicit estimates on the associated semigroup. The precise definition of $\L$ is given in Section~\ref{sec:defL}.

Next, let $\left( u_{n} \left( t \right) \right)_{n \in \ZZ}$ and $\left( f_{n} \right)_{n \in \ZZ}$ denote the Fourier coefficients of $u\left(t, \cdot \right)$ and $f$, respectively, that is
\begin{align*}
u(t,x) = \sum_{n=-\infty}^{\infty} u_{n}(t) e^{inx}, \quad
f(x) = \sum_{n=-\infty}^{\infty} f_{n} e^{inx},
\end{align*}
for all $t \in [0,2\tau]$ and $x \in \left[0,2\pi\right]$. Slightly abusing the notations, we also use $( g^{(j)}_{n}(u) \left( t \right) )_{n \in \ZZ}$ to denote the Fourier coefficients of $g^{(j)}(u)\left(t, \cdot \right)$, i.e.
\begin{align*}
	g^{(j)} (u)(t,x) = 
	\sum_{n=-\infty}^{\infty} g^{(j)}_{n}(u)(t) e^{inx},
\end{align*}
for all $j=0,\ldots,2R-1$. 

Since the approximation of the solutions will be made using Chebyshev interpolation in time, we rescale the time domain $[0,2\tau]$ to $[-1,1]$, on which the theory of Chebyshev approximations is developed (see Section \ref{sec:cheb_interp}). 
Substitution of the above Fourier expansions into~\eqref{eq:PDE_with_L} yields an infinite dimensional system of ODEs on $[-1,1]$ for 
the Fourier coefficients $\left(u_{n}\right)_{n \in \ZZ}$:
\begin{align}	
	\label{eq:semiflow}
	\begin{cases}
		\dfrac{ d u }{ d t }(t) - \tau \L u(t)  = \tau \biggl((-1)^{R+1} \D^{2R} u(t) + \displaystyle\sum_{j=0}^{2R-1} \D^{j} g^{(j)} (u(t)) -\L u(t) \biggr), & t \in [-1,1], \\[2ex]
		u \left( -1 \right) = f, 
	\end{cases}
\end{align}
where $\D$ is the Fourier transform of $\dfrac{\partial}{\partial x}$ :
\begin{equation*}
\left(\D u\right)_n \bydef  in u_n,
\end{equation*}
and where we abuse notation and denote the Fourier transform of $\L$ still by $\L$.

Finally, integration of \eqref{eq:semiflow} with the aid of variation of constants yields the following map.
\begin{definition}
	\label{Fisher:zeroMap}
	The zero finding map $F: \mathcal{X}_{\nu} \rightarrow \mathcal{X}_{\nu}$ for \eqref{eq:PDE} is defined by 
	\begin{align*}
		 F \left(u\right)\left(t\right) & \bydef 
		e^{ \tau(t+1) \L}f + \tau \int_{-1}^{t} e^{ \tau(t-s)\L } \gamma(u(s))
		 \mbox{d} s - u ( t ),
	\end{align*}
	where, for any $v\in\ell^1_\nu$,
	\begin{equation}\label{eq:defgamma}
		\gamma(v) \bydef (-1)^{R+1} \D^{2R} v + \displaystyle\sum_{j=0}^{2R-1} \D^{j} g^{(j)} (v) -\L v .
	\end{equation}	
\end{definition}
\begin{remark}
While we have not defined $\L$ precisely yet (this will be done in Section~\ref{sec:defL}), we are assuming that $\L$ generates a $C^0$ semi-group, with smoothing properties so that $F$ is well defined, even if 
$\gamma$ does not necessarily map $\ell^{1}_{\nu}$ to itself.
\end{remark}

\subsection{Finite dimensional reduction}
\label{sec:finitereduction}
In this section we introduce a finite dimensional reduction of $F$. 
To accomplish this we will need to truncate the phase space 
$\mathcal{X}_{\nu}$ and to discretize time. 

\begin{definition}[Truncation of phase space]
	Let $N \in \NN$ be a truncation parameter. 
	The projection $\Pi_{N} : \mathcal{X}_{\nu} \rightarrow \mathcal{X}_{\nu}$ is defined by
	\begin{align*}
		\left( \Pi_{N} ( u ) \right)_{n} \bydef  
		\begin{cases}
			u_{n}, &  0 \leq \vert n\vert \leq N, \\
			0, & \vert n\vert > N. 
		\end{cases}
	\end{align*} 
	Furthermore, we set $\Pi_{\infty} \bydef  I - \Pi_{N}$, where $I$ is the identity on $\mathcal{X}_{\nu}$. We also introduce the subspaces $\X^N_\nu=\Pi_N\left(\X_\nu\right)$ and $\X^\infty_\nu=\Pi_\infty\left(\X_\nu\right)$, for which we have
\begin{equation*}
\X_\nu = \X^N_\nu \oplus \X^\infty_\nu.
\end{equation*}
\end{definition}
\begin{remark}
	Henceforth we shall identify $\Pi_{N} \left( u \right)$ with the vector of functions 
	\begin{align*}
		\begin{bmatrix}
			u_{-N} \\
			\vdots \\
			u_{N}
		\end{bmatrix} \in C \left( \left[-1,1\right], \CC^{2N+1} \right).
	\end{align*}
\end{remark}

\begin{definition}[Time discretization]
\label{def:PimN}
Let $K \in \NN$. 
The Fourier-Chebyshev projection $\Pi_{KN} : \mathcal{X}_{\nu} \rightarrow C \left( \left[-1,1\right], \CC^{2N+1} \right)$ is defined by 
\begin{align*}
	\Pi_{KN} ( u ) \bydef  
	\begin{bmatrix}
		P_{K} ( u_{-N} ) \\
		\vdots \\
		P_{K} ( u_{N} )
	\end{bmatrix},
\end{align*}
where $P_{K} : C\left([-1,1],\CC\right) \rightarrow C\left([-1,1],\CC\right)$ is the operator which sends a continuous function to its Chebyshev interpolant
(see Section \ref{sec:cheb_interp}). We sometimes abuse notation by applying $P_K$ to a vector of functions, meaning we apply $P_K$ to each component, e.g. $\Pi_{KN}(u) = P_K \Pi_N(u)$. Furthermore, we set $\Pi_{\infty N} \bydef  \Pi_{N} - \Pi_{KN}$. We note that $\Pi_{\infty N} (u) $ vanishes at $t=\pm 1$.
\end{definition}

The truncation of phase space and discretization of time give rise to the decomposition 
\begin{align}
\label{eq:Xnu_decompo}
	\mathcal{X}_{\nu} = \mathcal{X}^{KN}_{\nu} \oplus \mathcal{X}^{\infty N}_{\nu} \oplus \mathcal{X}^{\infty}_{\nu},
\end{align}
where 
\begin{align*}
	\mathcal{X}^{KN}_{\nu} \bydef  \Pi_{KN} \left( \mathcal{X}_{\nu} \right), \quad 
	\mathcal{X}^{\infty N}_{\nu} \bydef  \Pi_{\infty N} \left( \mathcal{X}_{\nu} \right), \quad 
	\mathcal{X}^{\infty}_{\nu} \bydef  \Pi_{\infty} \left( \mathcal{X}_{\nu} \right).
\end{align*}
We equip $\mathcal{X}^{KN}_{\nu}$ with the norm 
\begin{align}
	\label{eq:norm_Xmn}
	\left \Vert u \right \Vert_{\mathcal{X}^{KN}_{\nu}} &\bydef  
	\Biggl\Vert
		\biggl[ \left \vert u_{n0} \right \vert + 2 \sum_{k=1}^{K} \left \vert u_{nk} \right \vert \biggr]_{n=-N}^{N}
	\Biggr\Vert_{\ell^1_\nu}
	=
	\sum_{\vert n\vert \leq N} \biggl(\left \vert u_{n0} \right \vert + 2 \sum_{k=1}^{K} \left \vert u_{nk} \right \vert\biggr)\nu^{\vert n\vert}.
\end{align}
There are several reasons for choosing this particular
norm over the more obvious supremum norm.
First of all, the norm in \eqref{eq:norm_Xmn} is numerically easy to compute, whereas the
computation of a supremum norm is relatively complicated. Furthermore, with this norm it is easy to compute
operator norms which amounts to computing weighted $\ell^{1}$-norms of finite dimensional matrices. 
This is to be contrasted with the use of a supremum norm, where the analysis of linear operators is much more complicated. 
Lastly, it follows from \eqref{eq:C0_norm} that the norm in \eqref{eq:norm_Xmn} is stronger than the supremum norm, i.e.,  
\begin{align*}
	\sup_{t \in [-1,1]} \left \Vert u(t) \right \Vert_{\ell^1_\nu}
	 \leq
\left \Vert \Vert u \Vert_{C^0} \right \Vert_{\ell^1_\nu}	
	\leq
	\left \Vert u \right \Vert_{\mathcal{X}^{KN}_{\nu}}, \quad \text{for all } u \in \mathcal{X}^{KN}_{\nu},
\end{align*}
thereby allowing one to relate the two norms in a straightforward manner. Here and throughout the paper, when we apply the $C^0$-norm to an element of $\X_\nu$, it should be understood as applying the $C^0$-norm to each Fourier mode.

The subspaces $\mathcal{X}^{\infty N}_{\nu}$ and $\mathcal{X}^{\infty}_{\nu}$ 
are both endowed with the $\left\Vert \cdot\right\Vert_{\ell^1_{\nu}\left(C^0\right)} = \left\Vert \left\Vert \cdot \right\Vert_{C^0} \right\Vert_{\ell^1_\nu}$ norm, i.e., 
\begin{align*}
	 \left\Vert u \right\Vert_{\ell^1_{\nu}\left(C^0\right)} \bydef  \sum_{n\in\ZZ} \left\Vert u_n \right\Vert_{C^0} \nu^{\vert n\vert}.
\end{align*}
Furthermore, the full space $\mathcal{X}_{\nu}$ is equipped with the norm 
\begin{align}
	\label{eq:normXnu}
	\left \Vert u \right \Vert_{\mathcal{X}_{\nu}} \bydef  
	\left \Vert \Pi_{KN} u \right \Vert_{\mathcal{X}^{KN}_{\nu}}+  
		\epsilon_{\infty N}^{-1} \left \Vert \Pi_{\infty N} u \right \Vert_{\mathcal{X}^{\infty N}_{\nu}} +
		\epsilon_{\infty}^{-1} \left \Vert \Pi_{\infty }u \right \Vert_{\mathcal{X}^{\infty}_{\nu}}
\end{align}
where $\epsilon_{\infty N}, \epsilon_{\infty}>0$ are weights whose purpose is to provide some control over the truncation errors in phase space and the interpolation errors in time. The choice of these weights is different for each example in Section~\ref{sec:examples} (the specific values can be found in the code). Over the whole space, the norm $\left\Vert\cdot\right\Vert_{\X_\nu}$ still controls the $C^0$ norm:
\begin{lemma}
\label{lem:norms}
Let $\vartheta_\epsilon =  \max(1,\epsilon_{\infty N}, \epsilon_{\infty})$. Then,
\begin{equation*}
\left\Vert u \right\Vert_{\ell^1_{\nu}\left(C^0\right)} \leq \vartheta_\epsilon
	\left \Vert u \right \Vert_{\mathcal{X}_{\nu}}, \quad \text{for all } u \in \mathcal{X}_{\nu}.
\end{equation*}
\end{lemma}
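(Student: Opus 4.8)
The plan is to reduce everything to the triangle inequality applied to the direct sum decomposition $\mathcal{X}_{\nu} = \mathcal{X}^{KN}_{\nu} \oplus \mathcal{X}^{\infty N}_{\nu} \oplus \mathcal{X}^{\infty}_{\nu}$, and then to invoke, on the finite-dimensional block, the comparison $\left\Vert \Pi_{KN} u \right\Vert_{\ell^1_\nu(C^0)} \leq \left\Vert \Pi_{KN} u \right\Vert_{\mathcal{X}^{KN}_{\nu}}$ recorded just above the statement, which is itself a consequence of~\eqref{eq:C0_norm} applied mode by mode.

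First I would write $u = \Pi_{KN} u + \Pi_{\infty N} u + \Pi_{\infty} u$ and check that $\left\Vert \cdot \right\Vert_{\ell^1_\nu(C^0)}$ is subadditive with respect to this particular splitting. This is immediate once one observes that $\Pi_{\infty}$ acts only on the Fourier modes with $\vert n\vert > N$, while $\Pi_{KN}$ and $\Pi_{\infty N}$ act on the complementary modes $\vert n \vert \leq N$ and sum to $\Pi_N$ there; hence for each $n$ one has either $u_n = (\Pi_\infty u)_n$, or $u_n = (\Pi_{KN} u)_n + (\Pi_{\infty N} u)_n = P_K(u_n) + (u_n - P_K u_n)$, so that $\left\Vert u_n \right\Vert_{C^0}$ is bounded by the sum of the $C^0$ norms of the corresponding pieces. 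Multiplying by $\nu^{\vert n\vert}$ and summing over $n\in\ZZ$ gives
\begin{equation*}
\left\Vert u \right\Vert_{\ell^1_\nu(C^0)} \leq \left\Vert \Pi_{KN} u \right\Vert_{\ell^1_\nu(C^0)} + \left\Vert \Pi_{\infty N} u \right\Vert_{\ell^1_\nu(C^0)} + \left\Vert \Pi_{\infty} u \right\Vert_{\ell^1_\nu(C^0)}.
\end{equation*}

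Then I would estimate the three terms separately: the first by $\left\Vert \Pi_{KN} u \right\Vert_{\mathcal{X}^{KN}_{\nu}}$ as noted above, and the other two being, by the very definition of the norms on $\mathcal{X}^{\infty N}_{\nu}$ and $\mathcal{X}^{\infty}_{\nu}$, equal to $\left\Vert \Pi_{\infty N} u \right\Vert_{\mathcal{X}^{\infty N}_{\nu}}$ and $\left\Vert \Pi_{\infty} u \right\Vert_{\mathcal{X}^{\infty}_{\nu}}$ respectively. Inserting the factors $\epsilon_{\infty N} \epsilon_{\infty N}^{-1}$ and $\epsilon_{\infty} \epsilon_{\infty}^{-1}$ in front of the last two terms, bounding $1$, $\epsilon_{\infty N}$ and $\epsilon_{\infty}$ all by $\vartheta_\epsilon$, and recognizing the definition~\eqref{eq:normXnu} of $\left\Vert \cdot \right\Vert_{\mathcal{X}_\nu}$ then yields the claim. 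I do not expect any genuine obstacle here; the only mildly delicate point is the subadditivity in the displayed inequality, and even that is routine once the disjoint-support structure of the three projections is made explicit.
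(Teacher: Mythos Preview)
Your proposal is correct. The paper states this lemma without proof, and your argument is precisely the natural one: split according to the three projections, use~\eqref{eq:C0_norm} on the $\mathcal{X}^{KN}_{\nu}$ block, recognize that the other two blocks already carry the $\ell^1_\nu(C^0)$ norm by definition, and then pull out the common factor $\vartheta_\epsilon$ to recover~\eqref{eq:normXnu}.
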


Finally, we define a finite dimensional reduction of $F$.
\begin{definition}[Finite dimensional reduction of $F$]
	The finite dimensional reduction $F_{KN}: \mathcal{X}^{KN}_{\nu} \rightarrow \mathcal{X}^{KN}_{\nu}$ of $F$
	is defined by 
$		F_{KN} \bydef  \Pi_{KN} \circ F \vert_{ \mathcal{X}^{KN}_{\nu} } $.
\end{definition}

\subsection{A posteriori analysis}
\label{sec:Newtonlike}
In this section we construct a Newton-like map for $F$ by using the finite dimensional 
reduction~$F_{KN}$. To this end, suppose we have computed the following: 
\begin{itemize}
	\item [$(i)$] An approximate zero $\bu$ in $\mathcal{X}^{KN}_{\nu} \simeq\CC^{(2N+1)\left(K+1\right)}$ of $F_{KN}$. 
	\item[$(ii)$] The derivative $DF_{KN} \left( \bu \right)$.
	\item [$(iii)$] An approximate \textit{injective} inverse $A_{KN}$ of $DF_{KN} \left( \bu \right)$.
\end{itemize}
\begin{remark}
	One can check that $A_{KN}$ is injective by verifying that the bound in~\eqref{eq:Z0} is strictly smaller than $1$. In fact, our computer-assisted proof can only be successful when this
	inequality is satisfied, cf.~\eqref{e:inequalities2}. Therefore, if the computer-assisted
	proof is successful, we may a posteriori conclude that $A_{KN}$ is injective without any further ado.  
\end{remark}

We will use the finite dimensional data to construct an approximate inverse of $DF \left( \bu \right)$. 
We anticipate that   
\begin{align*}
	\Pi_{\infty N}DF \left( \bu \right) \approx - \Pi_{\infty N} \qquad\text{and}\qquad
	\Pi_{\infty} DF \left( \bu\right) \approx - \Pi_{\infty}
\end{align*}
in a small neighborhood of $\bu$ provided $K$ and $N$ are sufficiently large. These observations motivate the following 
definitions: 
\begin{definition}[Approximation of $DF \left( \bu \right)$]
\label{def:approx_DF}
	The approximate derivative $\widehat{DF}: \mathcal{X}_{\nu} \rightarrow \mathcal{X}_{\nu}$ of $F$ at $\bu$ is defined by 
	\begin{align*}
		\widehat{DF} \bydef  DF_{KN} \left( \bu \right) \oplus \left( -\Pi_{\infty N} \right) \oplus \left( -\Pi_{\infty}\right). 
	\end{align*}
\end{definition}

\begin{definition}[Approximate inverse of $DF \left( \bu \right)$]
The approximate inverse $A: \mathcal{X}_{\nu} \rightarrow \mathcal{X}_{\nu}$  of $DF \left( \bu \right)$ is defined by 
\begin{align*}
	A \bydef  A_{KN} \oplus \left( -\Pi_{\infty N} \right) \oplus \left( -\Pi_{\infty} \right). 
\end{align*}
\end{definition}

Next, we define a Newton-like operator $T$ for $F$ based at $\bu$:
\begin{definition}[Newton-like operator for $F$]
The Newton-like operator $T: \mathcal{X}_{\nu} \rightarrow \mathcal{X}_{\nu}$ for $F$ based at $\bu$ is defined by 
\begin{align*}
	T \bydef  I - AF.
\end{align*}
\end{definition}

The idea is to seek fixed points of $T$ in a small neighborhood of $\bu$. To be more precise, let $B_{r,\epsilon}(0)$
denote the \emph{closed} ball of radius $r>0$ centered at $0$ in $\mathcal{X}_{\nu}$, i.e.,  
\begin{align}
\label{eq:def_ball}
	B_{r,\epsilon}(0) = 
	\left \{ h \in \mathcal{X}_{\nu} : 
	\left \Vert \Pi_{KN} \left( h \right) \right \Vert_{\mathcal{X}^{KN}_{\nu}} + \epsilon_{\infty N}^{-1}
	\left \Vert \Pi_{\infty N} \left( h \right) \right \Vert_{\mathcal{X}^{\infty N}_{\nu}} + \epsilon_{\infty}^{-1}
	\left \Vert \Pi_{\infty} \left( h \right) \right \Vert_{\mathcal{X}^{\infty}_{\nu}} \leq r
\right \}.
\end{align}
We shall prove the existence of a fixed point $u^*$ of $T$ in $B_{r,\epsilon} \left( \bu \right) = \bu + B_{r,\epsilon}(0)$, where $r>0$ is an unknown radius to be determined, by using Theorem~\ref{thm:NewtonKantorovich}, with $M=1$ and $X = \X_\nu$.

\begin{remark}
This fixed point $u^*$ of $T$ corresponds to a zero of $F$, since $A$ is injective. We have thus found the solution of the initial value problem~\eqref{eq:PDE}.
In view of Lemma~\ref{lem:norms}, the error bound on 
the distance between the solution $u^*$ and its approximation $\bu$ is controlled by 
\[
	\max_{\substack{ t\in[0,2\tau] , x \in [0,2\pi] }} |u^*(t,x)- \bu(t,x)| 
	\leq \|u^*- \bu\|_{\ell^1_{\nu}(C^0)} \leq \vartheta_{\epsilon} r.
\]

More precisely, the solution obtained via the computer assisted argument is a mild solution, continuous in time with values in $\ell^1_\nu$, and it is well known that parabolic problems of the form~\eqref{eq:PDE} have a unique mild solution which is in fact a classical solution~\cite{Lun12,Paz12}. However, the maximal existence time $t_{\text{max}}$ of this unique solution could in principle be smaller than~$2\tau$. Our proof additionally shows the maximal existence time $t_{\text{max}}$ is in fact larger than~$2\tau$, and provides more detailed information on the regularity of the solution (for $\nu >1$), namely that is it analytic in the space variable on a strip of width $\ln \nu$.
\end{remark}

\subsection{Construction of $\L$}
\label{sec:defL}

For reasons already outlined in the introduction, and that will be made more concrete later on, we want $\L$ to be a time \emph{independent}
approximation of 
\begin{align}\label{eq:linearization}
		h \mapsto (-1)^{(R+1)} \mathcal{D}^{2R}h + \sum_{j=0}^{2R-1} 
		\mathcal{D}^{j} \left( \left( g^{(j)} \right)'( \bar u(s) ) \ast h \right), \quad h \in \ell^{1}_{\nu},
\end{align}
where $\left( g^{(j)} \right)'$ simply denotes the derivative of the polynomial function $g^{(j)}$.
Although we choose $\L$ to be uniform in $s$, it does approximate the linear operator~\eqref{eq:linearization}, 
hence we have that, with $\gamma$ defined in \eqref{eq:defgamma},
\begin{equation*}
s\mapsto D\gamma(\bu(s)) \approx 0.
\end{equation*}
To construct $\L$, we start by introducing vectors $\bv^{(j)}\in\Pi^N\left(\ell^1_\nu\right)$, $j=0,\ldots,2R-1$, which in practice should be (constant in time) approximations of $s\mapsto \left(g^{(j)}\right)'(\bu)(s)$. Then, we define $\tL$ on $\X_{\nu} = \Pi_N \X_{\nu} \oplus \Pi_\infty \X_{\nu}$ by
\begin{equation*}
\left\{
\begin{aligned}
&\tL \Pi_N u \bydef   (-1)^{R+1} \D^{2R}\Pi_N u + \displaystyle\sum_{j=0}^{2R-1} \D^{j} \Pi_N\bigl(\bv^{(j)}\ast \Pi_N u\bigr) \\
&\left(\tL \Pi_\infty u\right)_n \bydef  \biggl(-n^{2R} + \displaystyle\sum_{j=0}^{2R-1} (in)^j \bv^{(j)}_0 \biggr) u_n \qquad \text{for all } \vert n\vert > N.
\end{aligned}
\right.
\end{equation*}
Notice that $\tL$ leaves $\X^N_{\nu}$ invariant, and that it does not depend on time, therefore its restriction to $\X^N_{\nu}$ can be represented as a finite dimensional matrix, that we denote by $\tL_N$.

Finally, we consider a diagonal matrix $\Lambda_N= \text{diag}\left(\lambda_{-N},\ldots,\lambda_{N}\right)$ and an invertible matrix $Q$ such that
\begin{equation*}
\tL_N \approx Q \Lambda_N Q^{-1},
\end{equation*}
and define
\begin{equation*}
\L_N \bydef Q \Lambda_N Q^{-1},
\end{equation*}
and $\L$ on $\X_{\nu} = \Pi_N \X_{\nu} \oplus \Pi_\infty \X_{\nu}$ by
\begin{equation*}
\left\{
\begin{aligned}
&\L \Pi_N u \bydef  \L_N\Pi_N u \\
&\left(\L \Pi_\infty u\right)_n \bydef  \biggl(-n^{2R} + \displaystyle\sum_{j=0}^{2R-1} (in)^j \bv^{(j)}_0 \biggr) u_n \quad \text{for all } \vert n\vert > N.
\end{aligned}
\right.
\end{equation*}


For further use, we denote, for all $\vert n\vert >N$,
\begin{equation}
\label{eq:eigenvalues}
\lambda_n \bydef -n^{2R} + \displaystyle\sum_{j=0}^{2R-1} (in)^j \bv^{(j)}_0,
\end{equation}
and 
\begin{equation*}
\R_N \bydef \L_N-\tL_N.
\end{equation*}
Notice that with this notation we can write
\begin{equation}
\label{eq:Dgamma}
\Pi_N D\gamma(\bu)u = \sum_{j=0}^{2R-1} \D^j \Pi_N\left[\left(\left(g^{(j)}\right)'(\bu) - \bv^{(j)}\right) \ast \Pi_N u + \left(g^{(j)}\right)'(\bu) \ast \Pi_\infty u \right] - \R_N \Pi_N u.
\end{equation}

Now that $A$ and $\L$ have been explicitly constructed, we are ready to derive the estimates needed to prove that $T$ has a locally unique fixed point around $\bu$.

\section{General estimates}
\label{sec:general_estimates}

We compile here a list of lemmas which will be used extensively in the computation of the $Y$, $Z$ and $W$ bounds needed for Theorem~\ref{thm:NewtonKantorovich}. These are essentially technical estimates, the reader more interested in an overview of the proof is encouraged to jump directly to Section~\ref{sec:bounds}, and to only refer to this section when needed. 

\subsection{An $\ell^1_\nu$ dual estimate}

\begin{lemma}
\label{lem:Upsilon}
For a given $(2N+1)\times (2N+1)$ matrix $B$ with complex entries, define $\Upsilon(B)\in \CC^{2N+1}$ by
\begin{equation*}
\Upsilon_n(B) = \max_{\vert m\vert \leq N} \frac{\vert B_{n,m}\vert }{\nu^{\vert m\vert}}, \quad \vert n\vert \leq N.
\end{equation*}
Then, for any $v\in \Pi_N\ell^1_\nu \cong \CC^{2N+1}$,
\begin{align*}
\left\vert Bv \right\vert \leq \Upsilon(B) \left\Vert v \right\Vert_{\ell^1_\nu},
\end{align*}
where the absolute values apply component-wise.
\end{lemma}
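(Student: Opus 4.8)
The plan is to verify the stated inequality $\left\vert Bv \right\vert \leq \Upsilon(B) \left\Vert v \right\Vert_{\ell^1_\nu}$ component by component, since both sides are vectors in $\RR^{2N+1}$ (or really the absolute values thereof) and the claimed ordering is component-wise. Fix an index $n$ with $\vert n\vert \leq N$. The $n$-th component of $Bv$ is $(Bv)_n = \sum_{\vert m\vert \leq N} B_{n,m} v_m$, so by the triangle inequality $\vert (Bv)_n\vert \leq \sum_{\vert m\vert \leq N} \vert B_{n,m}\vert\, \vert v_m\vert$.

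The key step is then to insert the weights $\nu^{\vert m\vert}$ artificially: write $\vert B_{n,m}\vert\, \vert v_m\vert = \frac{\vert B_{n,m}\vert}{\nu^{\vert m\vert}}\, \vert v_m\vert \nu^{\vert m\vert}$, and bound the first factor by its maximum over $m$, namely $\Upsilon_n(B)$. This gives
\begin{equation*}
\vert (Bv)_n\vert \leq \sum_{\vert m\vert \leq N} \Upsilon_n(B)\, \vert v_m\vert \nu^{\vert m\vert} = \Upsilon_n(B) \sum_{\vert m\vert \leq N} \vert v_m\vert \nu^{\vert m\vert} = \Upsilon_n(B) \left\Vert v\right\Vert_{\ell^1_\nu},
\end{equation*}
where the last equality uses that $v\in\Pi_N\ell^1_\nu$ is supported on modes $\vert m\vert\leq N$, so its $\ell^1_\nu$ norm is exactly the finite sum. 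Since this holds for every $n$ with $\vert n\vert\leq N$, we obtain the desired component-wise inequality.

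This is entirely a routine computation and I do not anticipate any real obstacle; the only thing worth being careful about is the identification $\Pi_N\ell^1_\nu \cong \CC^{2N+1}$ and making sure the $\ell^1_\nu$ norm of $v$ genuinely reduces to the finite weighted sum (which it does, by the support condition). One could also phrase the argument as recognizing $\Upsilon_n(B)$ as exactly the operator norm of the $n$-th row functional $v\mapsto (Bv)_n$ viewed as an element of the dual of $\ell^1_\nu$, which explains the lemma's title, but the direct estimate above suffices.
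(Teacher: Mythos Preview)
Your proof is correct; the paper in fact states this lemma without proof, treating it as elementary, and your component-wise triangle-inequality argument with the inserted weights $\nu^{\vert m\vert}$ is exactly the natural justification.
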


\subsection{Controlling convolution products}


\begin{lemma}
\label{lem:convo_with_h_split}
Let $a,b,h\in\X_{\nu}$. Define, for all $n$ in $\ZZ$,
\begin{equation*}
\Phi^\epsilon_n(a,b) \bydef  \max \biggl\{ \epsilon_{\infty N} \max_{\vert n-m\vert \leq N} \frac{\left\Vert a_m \right\Vert_{C^0}}{\nu^{\vert n-m\vert}},\ \epsilon_{\infty} \sup_{\vert n-m\vert > N} \frac{\left\Vert b_m \right\Vert_{C^0}}{\nu^{\vert n-m\vert}} \biggr\},
\end{equation*}
\begin{equation*}
\check\Phi^\epsilon_n(a,b) \bydef  \max \biggl\{ \max(1,\epsilon_{\infty N}) \max_{\vert n-m\vert \leq N} \frac{\left\Vert a_m \right\Vert_{C^0}}{\nu^{\vert n-m\vert}},\ \epsilon_{\infty} \sup_{\vert n-m\vert > N} \frac{\left\Vert b_m \right\Vert_{C^0}}{\nu^{\vert n-m\vert}} \biggr\},
\end{equation*}
and
\begin{align*}
\tilde\Phi^\epsilon(a,b) \bydef \max \biggl\{ &\max(1,\epsilon_{\infty N}) \max_{\vert m\vert\leq N} \sum_{\vert n\vert \leq N} \left\Vert a_{n-m} \right\Vert_{C^0} \nu^{\vert n\vert -\vert m\vert},\\ 
&\hspace*{3cm}\epsilon_{\infty}\sup_{\vert m\vert> N} \sum_{\vert n\vert \leq N} \left\Vert b_{n-m} \right\Vert_{C^0} \nu^{\vert n\vert- \vert m\vert} \biggr\}.
\end{align*}
Then, for all $n$ in $\ZZ$,
\begin{align*}
\left\Vert \left(a \ast \Pi_{\infty N} h + b \ast \Pi_{\infty} h \right)_n \right\Vert_{C^0} \leq \Phi^\epsilon_n(a,b) \left\Vert h \right\Vert_{\X_{\nu}},
\end{align*}
\begin{align*}
\left\Vert \left(a \ast \Pi_{N} h + b \ast \Pi_{\infty} h \right)_n \right\Vert_{C^0} \leq \check\Phi^\epsilon_n(a,b) \left\Vert h \right\Vert_{\X_{\nu}},
\end{align*}
and
\begin{align*}
\left\Vert \Pi_N \left[a \ast \Pi_N h + b \ast \Pi_{\infty} h \right] \right\Vert_{\ell^1_\nu(C^0)} \leq \tilde\Phi^\epsilon(a,b) \left\Vert h \right\Vert_{\X_{\nu}}.
\end{align*}
\end{lemma}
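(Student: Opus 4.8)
The plan is to prove each of the three inequalities by directly expanding the relevant convolution, splitting the sum over frequencies according to whether the index lies in the truncation range $\{|\cdot|\le N\}$ or not, and then applying the definition of the norm $\|\cdot\|_{\X_\nu}$ together with Lemma~\ref{lem:convo_with_h}. All three estimates have the same skeleton, so I would set up the bookkeeping once and reuse it.

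\medskip

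\emph{First inequality.} Fix $n\in\ZZ$. Writing out the convolution and using that $(\Pi_{\infty N}h)_m$ is supported on $|m|\le N$ while $(\Pi_\infty h)_m$ is supported on $|m|>N$, I get
\begin{align*}
\left\|\left(a\ast\Pi_{\infty N}h + b\ast\Pi_\infty h\right)_n\right\|_{C^0}
\le \sum_{|m|\le N}\|a_{n-m}\|_{C^0}\|(\Pi_{\infty N}h)_m\|_{C^0}
+ \sum_{|m|>N}\|b_{n-m}\|_{C^0}\|(\Pi_\infty h)_m\|_{C^0}.
\end{align*}
In the first sum I factor out $\max_{|n-m|\le N}\|a_{n-m}\|_{C^0}/\nu^{|n-m|}$ (the relevant shifts $n-m$ are arbitrary integers a priori, but the bound $\Phi^\epsilon_n$ only keeps the $|n-m|\le N$ part — here I would note that in fact one may restrict to $|n-m|\le 2N$, but the stated constant is an upper bound either way; more carefully, since the two supports force $|m|\le N$, the product structure of the target norm is what dictates the form), leaving $\sum_{|m|\le N}\|(\Pi_{\infty N}h)_m\|_{C^0}\nu^{|n-m|}$, and then bound $\nu^{|n-m|}\le\nu^{|n|}\nu^{|m|}$ using $\nu\ge1$ and the triangle inequality, so this piece is $\le \big(\max_{|n-m|\le N}\tfrac{\|a_{n-m}\|_{C^0}}{\nu^{|n-m|}}\big)\nu^{|n|}\,\|\Pi_{\infty N}h\|_{\ell^1_\nu(C^0)}$. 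Similarly the second sum is $\le\big(\sup_{|n-m|>N}\tfrac{\|b_{n-m}\|_{C^0}}{\nu^{|n-m|}}\big)\nu^{|n|}\|\Pi_\infty h\|_{\ell^1_\nu(C^0)}$. Recognizing that $\epsilon_{\infty N}^{-1}\|\Pi_{\infty N}h\|_{\X^{\infty N}_\nu}$ and $\epsilon_\infty^{-1}\|\Pi_\infty h\|_{\X^\infty_\nu}$ are both $\le\|h\|_{\X_\nu}$, I multiply and divide by the weights, take the max of the two coefficients, and arrive at $\Phi^\epsilon_n(a,b)\,\|h\|_{\X_\nu}$ (after absorbing the harmless $\nu^{|n|}$, which I should double-check is indeed meant to be absorbed into $\Phi^\epsilon_n$ — re-reading the definition, $\Phi^\epsilon_n$ does \emph{not} contain $\nu^{|n|}$, so in fact the cleanest route is to avoid introducing $\nu^{|n|}$ at all and instead keep $\nu^{|n-m|}$ paired with $\nu^{|m|}$ only when summing over $n$; for the per-$n$ estimate I simply bound $\sum_{|m|\le N}\|(\Pi_{\infty N}h)_m\|_{C^0}\nu^{|m|}\le\|\Pi_{\infty N}h\|_{\ell^1_\nu(C^0)}$ directly after factoring, i.e.\ the factored term is $\nu^{|n-m|}$ with $n-m$ ranging over $|n-m|\le N$, hence $\le\nu^N$ — but since the paper's $\Phi^\epsilon_n$ keeps the ratio form, the correct statement must be that the sum $\sum_{|m|\le N}\|a_{n-m}\|\|(\Pi_{\infty N}h)_m\|$ is rewritten as $\sum_m \tfrac{\|a_{n-m}\|}{\nu^{|n-m|}}\nu^{|n-m|}\|(\Pi_{\infty N}h)_m\|$ and one uses $\nu^{|n-m|}\|(\Pi_{\infty N}h)_m\|_{C^0}$ — hmm, this does not telescope to the plain $\ell^1_\nu(C^0)$ norm). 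I would therefore reconcile this carefully at the point of writing, most likely concluding that $\Phi^\epsilon_n$ as defined multiplies $\|h\|_{\X_\nu}$ precisely because the $\nu$-weight on $h$'s coefficients cancels the needed $\nu^{|n-m|}$ up to the $\nu^{|n|}$ which one checks is indeed implicitly carried (or the lemma is used only in contexts where this is summed against a further $\nu^{|n|}$).

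\medskip

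\emph{Second and third inequalities.} The second is proved exactly as the first, the only change being that $\Pi_N h$ replaces $\Pi_{\infty N}h$, so its coefficients for $|m|\le N$ now include $\|(\Pi_{KN}h)_m\|_{C^0}$ as well; since $\epsilon_{\infty N}^{-1}\|\Pi_{\infty N}h\|_{\X^{\infty N}_\nu}+\|\Pi_{KN}h\|_{\X^{KN}_\nu}\le\|h\|_{\X_\nu}$, the coefficient $\max(1,\epsilon_{\infty N})$ appears instead of $\epsilon_{\infty N}$, which is exactly $\check\Phi^\epsilon_n$. For the third inequality I additionally apply $\Pi_N$ and take the full $\ell^1_\nu(C^0)$ norm, i.e.\ I sum the per-$n$ bounds (for $|n|\le N$) against $\nu^{|n|}$; reorganizing the double sum by the new summation index $m$ and shift $n-m$, and using $\nu^{|n|-|m|}\le\nu^{|n-m|}$ gives the coefficient $\max_{|m|\le N}\sum_{|n|\le N}\|a_{n-m}\|_{C^0}\nu^{|n|-|m|}$ for the $\Pi_N h$ part and the analogous $\sup_{|m|>N}$ expression for the $\Pi_\infty h$ part, matching $\tilde\Phi^\epsilon(a,b)$ after the same weight manipulation.

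\medskip

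The main obstacle, as the parenthetical digressions above signal, is \emph{getting the $\nu$-bookkeeping exactly right} so that the stated constants $\Phi^\epsilon_n$, $\check\Phi^\epsilon_n$, $\tilde\Phi^\epsilon$ come out precisely as written — in particular tracking which $\nu$-powers attach to $a,b$ versus to the coefficients of $h$, and making sure the product-space structure of $\|\cdot\|_{\X_\nu}$ (the sum of three weighted pieces being $\le\|h\|_{\X_\nu}$, not the max) is used correctly so that the coefficients $\epsilon_{\infty N}$, $\epsilon_\infty$, $\max(1,\epsilon_{\infty N})$ land in the right slots. Everything else is a routine triangle inequality plus $\nu\ge1$ argument.
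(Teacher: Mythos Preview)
Your overall strategy is exactly the paper's: expand the convolution, split the sum according to the support of the projected $h$, pull out a sup of the $a$/$b$ coefficient divided by a $\nu$-weight, and recognize what remains as the $\ell^1_\nu(C^0)$ norm of the relevant projection of $h$, then use the weight structure of $\|\cdot\|_{\X_\nu}$.

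The only issue---and it is precisely the one you flag as the ``main obstacle''---is that your $\nu$-bookkeeping goes off the rails because you pair the $\nu$-weight with the wrong index. You write the convolution as $\sum_{|m|\le N}\|a_{n-m}\|_{C^0}\|(\Pi_{\infty N}h)_m\|_{C^0}$ and then try to factor out $\|a_{n-m}\|/\nu^{|n-m|}$. That leaves $\nu^{|n-m|}\|(\Pi_{\infty N}h)_m\|$, which does \emph{not} sum to $\|\Pi_{\infty N}h\|_{\ell^1_\nu(C^0)}$; this is the source of the phantom $\nu^{|n|}$ you then struggle with. The fix is simply to multiply and divide by $\nu^{|m|}$ instead (the index of~$h$!): factor out $\max_{|m|\le N}\|a_{n-m}\|_{C^0}/\nu^{|m|}$, and what remains is $\sum_{|m|\le N}\|(\Pi_{\infty N}h)_m\|_{C^0}\nu^{|m|}=\|\Pi_{\infty N}h\|_{\X^{\infty N}_\nu}$ on the nose, with no stray $\nu^{|n|}$. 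After relabelling the dummy variable ($m'=n-m$), the factored-out quantity is exactly $\max_{|n-m'|\le N}\|a_{m'}\|/\nu^{|n-m'|}$, matching the definition of $\Phi^\epsilon_n$. Equivalently---and this is how the paper does it---write the convolution the other way round, $\sum_m a_m h_{n-m}$, so the projection constraint reads $|n-m|\le N$ directly and one multiplies and divides by $\nu^{|n-m|}$. Once you see that the $\nu$-weight must be indexed by whatever $h$ is indexed by, all three estimates fall out in two lines each, exactly as you outline.
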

\begin{proof}
Let $h^{\infty N} = \Pi_{\infty N} h$ and $h^{\infty} = \Pi_{\infty} h$. For all $s\in[-1,1]$, by using~\eqref{eq:normXnu},
\begin{align*}
\left\vert \left(a \ast h^{\infty N} + b \ast h^{\infty}\right)_n(s) \right\vert & \leq  \sum_{m\in\ZZ} \left( \left\vert a_m(s) h^{\infty N}_{n-m}(s) \right\vert + \left\vert b_m(s) h^{\infty}_{n-m}(s) \right\vert \right) \\
&\leq \sum_{\vert n-m\vert \leq N}  \frac{\left\Vert a_m \right\Vert_{C^0}}{\nu^{\vert n-m\vert}} \left\Vert h^{\infty N}_{n-m} \right\Vert_{C^0} \nu^{\vert n-m\vert} \\
& \quad + \sum_{\vert n-m\vert > N}  \frac{\left\Vert b_m \right\Vert_{C^0}}{\nu^{\vert n-m\vert}} \left\Vert h^{\infty}_{n-m} \right\Vert_{C^0} \nu^{\vert n-m\vert}   \\
&\leq \max_{\vert n-m\vert \leq N} \frac{\left\Vert a_m \right\Vert_{C^0}}{\nu^{\vert n-m\vert}} \left \Vert \Pi_{\infty N} h \right \Vert_{\mathcal{X}^{\infty N}_{\nu}} + \sup_{\vert n-m\vert > N} \frac{\left\Vert b_m \right\Vert_{C^0}}{\nu^{\vert n-m\vert}} \left \Vert \Pi_{\infty }h \right \Vert_{\mathcal{X}^{\infty}_{\nu}} \\
&\leq \Phi^\epsilon_n(a,b) \left\Vert h \right\Vert_{\X_{\nu}}. 
\end{align*}
The second estimate is obtained in the same way, using additionally that 
\begin{align*}
\sum_{\vert m\vert \leq N} \left\Vert h_{m} \right\Vert_{C^0} \nu^{\vert m\vert} &\leq \left \Vert \Pi_{KN} h \right \Vert_{\mathcal{X}^{KN}_{\nu}} + \left \Vert \Pi_{\infty N} h \right \Vert_{\mathcal{X}^{\infty N}_{\nu}} \\
& \leq \max(1,\epsilon_{\infty N} ) \left(\left \Vert \Pi_{KN} h \right \Vert_{\mathcal{X}^{KN}_{\nu}} + \epsilon_{\infty N}^{-1} \left \Vert \Pi_{\infty N} h \right \Vert_{\mathcal{X}^{\infty N}_{\nu}} \right).
\end{align*}
For the final estimate, introducing $h^{N} = \Pi_{N} h$, we proceed in a similar fashion:
\begin{align*}
\left\Vert \Pi_N \left[a \ast h^{N} + b \ast h^{\infty} \right] \right\Vert_{\ell^1_\nu(C^0)} & \leq  \sum_{\vert n\vert \leq N} \sum_{m\in\ZZ} \left( \left\Vert a_{n-m} \right\Vert_{C^0} \left\Vert h^{N}_{m} \right\Vert_{C^0} + \left\Vert b_{n-m} \right\Vert_{C^0} \left\Vert h^{\infty}_{m} \right\Vert_{C^0} \right) \nu^{\vert n\vert} \\
& \leq  \sum_{\vert m\vert\leq N} \left\Vert h_{m} \right\Vert_{C^0} \nu^{\vert m\vert} \sum_{\vert n\vert \leq N} \left\Vert a_{n-m} \right\Vert_{C^0} \nu^{\vert n\vert -\vert m\vert} \\
&\qquad + \sum_{\vert m\vert> N} \left\Vert h_{m} \right\Vert_{C^0} \nu^{\vert m\vert} \sum_{\vert n\vert \leq N} \left\Vert b_{n-m} \right\Vert_{C^0} \nu^{\vert n\vert -\vert m\vert} \\
& \leq  \left( \left\Vert \Pi_{KN} h \right\Vert_{\X^{KN}_\nu} + \epsilon_{\infty N}^{-1}\left\Vert \Pi_{\infty N} h \right\Vert_{\X^{\infty N}_\nu} \right) \\
&\qquad \times \max(1,\epsilon_{\infty N})\max_{\vert m\vert\leq N} \sum_{\vert n\vert \leq N} \left\Vert a_{n-m} \right\Vert_{C^0} \nu^{\vert n\vert -\vert m\vert} \\
&\qquad\qquad + \epsilon_{\infty}^{-1}\left\Vert \Pi_{\infty} h \right\Vert_{\X^{\infty}_\nu}  \, \epsilon_{\infty}\sup_{\vert m\vert> N} \sum_{\vert n\vert \leq N} \left\Vert b_{n-m} \right\Vert_{C^0} \nu^{\vert n\vert -\vert m\vert}. \qedhere
\end{align*}
\end{proof}

\begin{remark}
As soon as $b$ only has a finite number of non-zero Fourier modes, the suprema in $\Phi^\epsilon_n(a,b)$, $\check\Phi^\epsilon_n(a,b)$ and $\tilde\Phi^\epsilon(a,b)$ become maxima that can be computed.
\end{remark}

\begin{lemma}
	\label{lem:Theta}
Let $r\geq 0$ and $p$ be a polynomial, written in the canonical basis
\begin{equation*}
p(x) = \sum_{k=0}^{N_p} p_k x^k.
\end{equation*}
If we denote by $\vert p\vert$ the polynomial
\begin{equation*}
\vert p\vert (x) \bydef \sum_{k=0}^{N_p} \vert p_k\vert x^k,
\end{equation*}
then for all $\Vert v \Vert_{\mathcal{X}_{\nu}} \leq 1$ and  $\Vert h \Vert_{\mathcal{X}_{\nu}} \leq 1$ we have 
\begin{equation*}
\left \Vert  p \left( \bu  +  r v\right) \ast v \ast h \right \Vert_{\ell^1_\nu(C^0)} \leq  \vartheta_\epsilon^2  \, \vert p \vert \left(\left\Vert  \bu \right\Vert_{\ell^1_\nu(C^0)} +  \vartheta_\epsilon r \right).
\end{equation*}
\begin{proof} 
We use the Banach algebra properties of $\ell^1_\nu$ and $\ell^1_\nu(C^0)$ to get
\begin{align*}
\left \Vert  p \left( \bu  +  r v\right) \ast v \ast h \right \Vert_{\ell^1_\nu(C^0)} 
&\leq   \left \Vert  p \left( \bu  +  r v\right) \right \Vert_{\ell^1_\nu(C^0)} 
\Vert v \Vert_{\ell^1_\nu(C^0)} \Vert h\Vert_{\ell^1_\nu(C^0)} 
\\
&\leq  \vartheta_\epsilon^2  \sum_{k=0}^{N_p} \vert p_k\vert \left \Vert  \bigl( \left\Vert\bu\right\Vert_{C^0} +  r \left\Vert v\right\Vert_{C^0}\bigr)^k \right \Vert_{\ell^1_\nu} \\
&\leq  \vartheta_\epsilon^2  \sum_{k=0}^{N_p} \vert p_k\vert \left\Vert \bigl(  \left\Vert\bu\right\Vert_{C^0} +  r \left\Vert v\right  \Vert_{C^0} \bigr) \right \Vert_{\ell^1_\nu}^k \\
&\leq  \vartheta_\epsilon^2  \sum_{k=0}^{N_p} \vert p_k\vert \bigl(   \left\Vert\bu\right\Vert_{\ell^1_\nu(C^0)} +   \vartheta_\epsilon r \bigr)^k. \qedhere
\end{align*}
\end{proof}
\end{lemma}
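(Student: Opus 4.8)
The plan is to reduce everything to two ingredients already available: the Banach algebra structure behind $\ell^1_\nu(C^0)$ and the comparison $\left\Vert \cdot \right\Vert_{\ell^1_\nu(C^0)} \leq \vartheta_\epsilon \left\Vert \cdot \right\Vert_{\X_\nu}$ from Lemma~\ref{lem:norms}. First I would record that $\ell^1_\nu(C^0)$ is itself a Banach algebra for the (time-wise) discrete convolution: for $a,b \in \X_\nu$ one has, mode by mode, $\left\Vert (a\ast b)_n \right\Vert_{C^0} \leq \sum_{m} \left\Vert a_m \right\Vert_{C^0} \left\Vert b_{n-m} \right\Vert_{C^0}$, since $\left\Vert a_m(\cdot) b_{n-m}(\cdot) \right\Vert_{C^0} \leq \left\Vert a_m \right\Vert_{C^0}\left\Vert b_{n-m} \right\Vert_{C^0}$, and then summing against $\nu^{\vert n\vert}$ and invoking Lemma~\ref{prop:BanachAlgebra} gives $\left\Vert a\ast b \right\Vert_{\ell^1_\nu(C^0)} \leq \left\Vert a \right\Vert_{\ell^1_\nu(C^0)} \left\Vert b \right\Vert_{\ell^1_\nu(C^0)}$.

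With this in hand I would split the triple convolution product,
\begin{equation*}
\left\Vert p(\bu+rv)\ast v\ast h \right\Vert_{\ell^1_\nu(C^0)} \leq \left\Vert p(\bu+rv) \right\Vert_{\ell^1_\nu(C^0)}\,\left\Vert v \right\Vert_{\ell^1_\nu(C^0)}\,\left\Vert h \right\Vert_{\ell^1_\nu(C^0)},
\end{equation*}
and immediately absorb the last two factors: by Lemma~\ref{lem:norms}, $\left\Vert v \right\Vert_{\ell^1_\nu(C^0)} \leq \vartheta_\epsilon \left\Vert v \right\Vert_{\X_\nu} \leq \vartheta_\epsilon$ and likewise for $h$, which produces the prefactor $\vartheta_\epsilon^2$ appearing in the statement.

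It remains to estimate $\left\Vert p(\bu+rv) \right\Vert_{\ell^1_\nu(C^0)}$, where $p(\bu+rv)$ denotes (at each time) the Fourier sequence of $x\mapsto p\big((\bu+rv)(s,x)\big)$, i.e.\ $\sum_{k=0}^{N_p} p_k\,(\bu+rv)^{\ast k}$ with $\ast k$ the $k$-fold convolution power. By the triangle inequality and the Banach algebra property just established,
\begin{equation*}
\left\Vert p(\bu+rv) \right\Vert_{\ell^1_\nu(C^0)} \leq \sum_{k=0}^{N_p}\vert p_k\vert\,\left\Vert \bu+rv \right\Vert_{\ell^1_\nu(C^0)}^{\,k} \leq \sum_{k=0}^{N_p}\vert p_k\vert\,\left(\left\Vert \bu \right\Vert_{\ell^1_\nu(C^0)}+r\left\Vert v \right\Vert_{\ell^1_\nu(C^0)}\right)^{k},
\end{equation*}
and one last application of Lemma~\ref{lem:norms} to replace $r\left\Vert v \right\Vert_{\ell^1_\nu(C^0)}$ by $\vartheta_\epsilon r$ rewrites the right-hand side as $\vert p\vert\big(\left\Vert \bu \right\Vert_{\ell^1_\nu(C^0)}+\vartheta_\epsilon r\big)$. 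Combining with the $\vartheta_\epsilon^2$ prefactor yields the claim. There is no genuine obstacle here; the only point requiring a little care is the bookkeeping around applying the $C^0$ norm mode-wise before summing in $\ell^1_\nu$, which is exactly what the preliminary Banach algebra observation takes care of.
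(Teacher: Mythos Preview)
Your proof is correct and follows essentially the same approach as the paper: both use the Banach algebra structure of $\ell^1_\nu(C^0)$ to split the triple convolution, pick up the factor $\vartheta_\epsilon^2$ from Lemma~\ref{lem:norms} applied to $v$ and $h$, and then bound $\left\Vert p(\bu+rv)\right\Vert_{\ell^1_\nu(C^0)}$ term by term, using the algebra property once more together with a final application of Lemma~\ref{lem:norms}. The only cosmetic difference is that the paper routes the estimate on $p(\bu+rv)$ through the intermediate quantity $\left\Vert(\Vert\bu\Vert_{C^0}+r\Vert v\Vert_{C^0})^k\right\Vert_{\ell^1_\nu}$ before invoking the $\ell^1_\nu$ Banach algebra property, whereas you apply the $\ell^1_\nu(C^0)$ Banach algebra property directly; the two are equivalent.
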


\subsection{Interpolation errors}

\begin{lemma}
\label{lem:interp_exp}
Let $\lambda\in\CC$, $\tau>0$ and $\rho>1$,
	\begin{align*}
		\left \Vert \left( I - P_{K} \right) \left( t \mapsto e^{ \tau(t+1) \lambda} \right) \right \Vert_{C^{0}} &\leq
		\frac{4 \rho^{-K}}{\rho -1} 
		\exp\left( \tau \Bigl(\Re(\lambda) + \frac{1}{2}\sqrt{\Re(\lambda)^2(\rho+\rho^{-1})^2 + \Im(\lambda)^2 (\rho-\rho^{-1})^2} \Bigr)\right),
	\end{align*}
where $\Re(\lambda)$ and $\Im(\lambda)$ denote the real part and the imaginary part of $\lambda$, respectively.
\end{lemma}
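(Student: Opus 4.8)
The plan is to recognize that $f(t) \bydef e^{\tau(t+1)\lambda}$ is an entire function, hence it extends analytically to, and is bounded on, every Bernstein ellipse $\mathcal{E}_{\rho}$ with $\rho>1$. Theorem~\ref{thm:analytic_intp_error} then applies verbatim and gives
\[
	\left\Vert (I-P_K)\left( t\mapsto e^{\tau(t+1)\lambda}\right)\right\Vert_{C^0} \leq \frac{4\rho^{-K}}{\rho-1} \sup_{z\in\mathcal{E}_\rho}\left\vert e^{\tau(z+1)\lambda}\right\vert ,
\]
so the whole task reduces to estimating $\sup_{z\in\mathcal{E}_\rho}\left\vert e^{\tau(z+1)\lambda}\right\vert$.

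For this I would write $\left\vert e^{\tau(z+1)\lambda}\right\vert = \exp\left(\tau\,\Re((z+1)\lambda)\right) = \exp\left(\tau\bigl(\Re(\lambda) + \Re(z)\Re(\lambda) - \Im(z)\Im(\lambda)\bigr)\right)$. The exponent is an affine (in particular harmonic) function of $z$, so its supremum over the bounded open set $\mathcal{E}_\rho$ equals its maximum over the boundary $\partial\mathcal{E}_\rho$ by the maximum principle (and the supremum over the open ellipse coincides with that over its closure by continuity). Using the parametrization recalled in the proof of Lemma~\ref{lem:upperbounds_norms}, a point of $\partial\mathcal{E}_\rho$ is $z = \tfrac12(\omega+\omega^{-1})$ with $\vert\omega\vert=\rho$, i.e. $z = \tfrac12(\rho e^{i\theta}+\rho^{-1}e^{-i\theta})$, whence $\Re(z) = \tfrac{\rho+\rho^{-1}}{2}\cos\theta$ and $\Im(z) = \tfrac{\rho-\rho^{-1}}{2}\sin\theta$ for $\theta\in[0,2\pi)$.

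It then remains to maximize $A\cos\theta + B\sin\theta$ over $\theta$, with $A = \tfrac{\rho+\rho^{-1}}{2}\Re(\lambda)$ and $B = -\tfrac{\rho-\rho^{-1}}{2}\Im(\lambda)$; its maximum is $\sqrt{A^2+B^2} = \tfrac12\sqrt{(\rho+\rho^{-1})^2\Re(\lambda)^2 + (\rho-\rho^{-1})^2\Im(\lambda)^2}$. Substituting this back into the exponent and then into the Theorem~\ref{thm:analytic_intp_error} bound yields exactly the claimed inequality. No step is genuinely delicate here: the only points that deserve a line of justification are the passage from the open ellipse to its boundary (handled by the maximum principle applied to the harmonic exponent) and the routine bookkeeping of real and imaginary parts — and one could even skip the explicit trigonometric optimization by bounding $A\cos\theta + B\sin\theta \leq \sqrt{A^2+B^2}$ directly via Cauchy--Schwarz.
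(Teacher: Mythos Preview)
Your proposal is correct and follows essentially the same approach as the paper: apply Theorem~\ref{thm:analytic_intp_error} and then compute $\sup_{z\in\mathcal{E}_\rho}\Re(\lambda(z+1))$ via the Joukowski-type parametrization of the ellipse and the elementary bound $A\cos\theta+B\sin\theta\leq\sqrt{A^2+B^2}$. The only cosmetic difference is that the paper parametrizes the full ellipse by $z=\tfrac12(re^{i\theta}+r^{-1}e^{-i\theta})$ with $r\in[1,\rho]$ and observes the supremum in $r$ is attained at $r=\rho$, whereas you invoke the maximum principle for the harmonic exponent to reduce directly to the boundary; both arrive at the same computation.
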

\begin{proof}
From Theorem~\ref{thm:analytic_intp_error} we get that
\begin{align*}
\left \Vert \left( I - P_{K} \right) \left( t \mapsto e^{ \tau(t+1) \lambda} \right) \right \Vert_{C^{0}} &\leq \frac{ 4 \rho^{-K} }{ \rho -1 } \sup_{z \in \mathcal{E}_{\rho}} e^{\tau\Re\left(\lambda(z+1)\right)}.
\end{align*}
Any $z\in\mathcal{E}_{\rho}$ can be written $z=\frac{1}{2}\left(re^{i\theta}+r^{-1}e^{-i\theta}\right)$, for some $(r,\theta)\in[1,\rho]\times[0,2\pi]$, and we then have
\begin{align*}
\Re\left(\lambda\left( z+1 \right)\right) &= \Re(\lambda)\left(1+\frac{1}{2}(r+r^{-1})\cos\theta\right) - \Im(\lambda)\frac{1}{2}(r-r^{-1})\sin\theta \\
&= \Re(\lambda) + \frac{1}{2}\sqrt{\Re(\lambda)^2(r+r^{-1})^2 + \Im(\lambda)^2 (r-r^{-1})^2}\cos(\theta-\theta_{r,\lambda}),
\end{align*}
for some $\theta_{r,\lambda}\in[0,2\pi]$, hence
	\begin{equation*}
		 \sup_{z \in \mathcal{E}_{\rho}} \Re\left(\lambda(z+1)\right) =  \Re(\lambda) + \frac{1}{2}\sqrt{\Re(\lambda)^2(\rho+\rho^{-1})^2 + \Im(\lambda)^2 (\rho-\rho^{-1})^2}. \hfill  \qedhere 
	\end{equation*}
\end{proof}

\begin{remark}
For a given $\lambda$, the quality of the above estimate depends on the choice of $\rho$, therefore in practice we approximately optimize over $\rho$.
\end{remark}

\begin{lemma}
\label{lem:interpolation_error_N}
Let $\varphi_N\in C^0\left([-1,1],\CC^{2N+1}\right)$, seen as an element of $\Pi_N\X_\nu$, $j\in\NN$, $\tau>0$, $\sigma_{K,0}>0$ satisfying Definition~\ref{def:interp_cste}, and consider the $(2N+1)\times (2N+1)$ diagonal matrix 
\begin{equation}
\label{eq:DN}
D_N(t) \bydef \Re(\Lambda_N)^{-1}\left(e^{\tau(1+t)\Re(\Lambda_N)}-I_{2N+1}\right) = \tau \int_{-1}^{t} \left\vert e^{\tau(t-s)\Lambda_N}\right\vert \mbox{d} s,
\end{equation}
where $\Re$ and $\vert\cdot\vert$ applied to matrices must be understood component-wise. Let $\D_N = \Pi_N \D \Pi_N$. Then 
\begin{align*}
&\left\Vert (I-P_K)\left(t\mapsto \tau \int_{-1}^t e^{\tau(t-s)\L_N}\D^j\varphi_N(s) \d s \right) \right\Vert_{\mathcal{X}^{\infty N}_{\nu}}  \\
&\qquad \leq \tau\sigma_{K,0}\left(\left\Vert \vert \D_N^j \vert +\vert Q\vert \vert \Lambda_N \vert D_N(1) \vert Q^{-1}\vert\vert \D_N^j \vert\right\Vert_{B(\ell^1_\nu,\ell^1_\nu)} \right) \left\Vert  \varphi_N \right\Vert_{\ell^1_\nu\left(C^0\right)} .
\end{align*}
\end{lemma}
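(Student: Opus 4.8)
The plan is to reduce the estimate to an application of the $C^l$ interpolation-error bound of Definition~\ref{def:interp_cste} with $l=0$, which requires controlling the $C^0$ norm of the first time-derivative of the function
\begin{equation*}
\psi(t) \bydef \tau \int_{-1}^t e^{\tau(t-s)\L_N}\D^j\varphi_N(s)\,\d s.
\end{equation*}
First I would differentiate under the integral sign: since the integrand at $s=t$ contributes $\tau\D^j\varphi_N(t)$ and differentiating the exponential inside the integral brings down a factor $\tau\L_N$, one gets $\psi'(t) = \tau\D^j\varphi_N(t) + \tau\L_N\,\psi(t)$. Applying $\sigma_{K,0}$ componentwise (after identifying $\X^{\infty N}_\nu$ with the interpolation error of each Fourier mode, which vanishes at $t=\pm1$ as recorded after Definition~\ref{def:PimN}) then gives
\begin{equation*}
\left\Vert (I-P_K)\psi \right\Vert_{\X^{\infty N}_\nu} \leq \sigma_{K,0}\left\Vert \psi' \right\Vert_{\ell^1_\nu(C^0)} \leq \sigma_{K,0}\left( \tau\left\Vert \D^j\varphi_N \right\Vert_{\ell^1_\nu(C^0)} + \tau\left\Vert \L_N\psi \right\Vert_{\ell^1_\nu(C^0)} \right).
\end{equation*}

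Next I would bound $\left\Vert \L_N\psi(t)\right\Vert$ uniformly in $t$. Writing $\L_N = Q\Lambda_N Q^{-1}$, we have $\L_N\psi(t) = \tau Q\Lambda_N\int_{-1}^t e^{\tau(t-s)\Lambda_N}Q^{-1}\D^j\varphi_N(s)\,\d s$, and taking absolute values componentwise and pulling the $\ell^1_\nu(C^0)$ norm of $\varphi_N$ out via the triangle inequality and the elementary bound $|e^{z}|=e^{\Re z}$, the matrix $\int_{-1}^t |e^{\tau(t-s)\Lambda_N}|\,\d s$ is exactly $D_N(t)$ as defined in~\eqref{eq:DN}, which is entrywise nondecreasing in $t$ and hence bounded by $D_N(1)$. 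This yields $\left\Vert \L_N\psi \right\Vert_{\ell^1_\nu(C^0)} \leq \left\Vert |Q||\Lambda_N|D_N(1)|Q^{-1}||\D^j| \right\Vert_{B(\ell^1_\nu,\ell^1_\nu)} \left\Vert \varphi_N \right\Vert_{\ell^1_\nu(C^0)}$, where the operator norm on $\ell^1_\nu$ of a nonnegative matrix acting on finitely many modes is the usual weighted column-sum expression. Combining with the $\tau\left\Vert\D^j\varphi_N\right\Vert$ term, bounded by $\left\Vert |\D^j| \right\Vert_{B(\ell^1_\nu,\ell^1_\nu)}\left\Vert\varphi_N\right\Vert_{\ell^1_\nu(C^0)}$, and factoring out $\tau\sigma_{K,0}$ gives the claimed inequality.

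The main obstacle I anticipate is being careful about two bookkeeping points rather than any deep estimate: (i) justifying that applying $P_K$ componentwise and using $\sigma_{K,0}$ on each Fourier mode assembles correctly into the $\X^{\infty N}_\nu$ norm — this works because $\Pi_{\infty N}\psi$ is the stacked vector of modewise interpolation errors and the $\X^{\infty N}_\nu$ norm is the $\ell^1_\nu$ norm of the stacked $C^0$ norms; and (ii) controlling the componentwise absolute values through the similarity transform $Q\Lambda_N Q^{-1}$, where one cannot commute $|\cdot|$ past $Q$ for free but only use $|Qx|\leq |Q||x|$ and $|e^{\tau(t-s)\Lambda_N}Q^{-1}y|\leq |e^{\tau(t-s)\Lambda_N}||Q^{-1}||y|$ since $\Lambda_N$ is diagonal — this is exactly why the bound is stated with the chain $|Q||\Lambda_N|D_N(1)|Q^{-1}||\D^j|$ rather than something sharper. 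The monotonicity $D_N(t)\leq D_N(1)$ componentwise follows since each diagonal entry is $\Re(\lambda_n)^{-1}(e^{\tau(1+t)\Re(\lambda_n)}-1)$, which is increasing in $t$ regardless of the sign of $\Re(\lambda_n)$ (interpreting the $\Re(\lambda_n)=0$ entry as its limit $\tau(1+t)$).
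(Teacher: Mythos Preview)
Your proposal is correct and follows essentially the same approach as the paper: differentiate $\psi$ under the integral sign to obtain $\psi'(t)=\tau\D^j\varphi_N(t)+\tau\L_N\psi(t)$, apply the $\sigma_{K,0}$ interpolation bound componentwise, diagonalize $\L_N=Q\Lambda_N Q^{-1}$, and estimate $\tau\int_{-1}^t|e^{\tau(t-s)\Lambda_N}|\,\d s$ by $D_N(1)$ using the monotonicity of $D_N(t)$ in $t$. Your discussion of the two bookkeeping points (componentwise assembly of the $\X^{\infty N}_\nu$ norm and the handling of $|\cdot|$ through $Q$) is accurate and in fact slightly more explicit than the paper's presentation.
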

\begin{proof}
Using Definition~\ref{def:interp_cste}, we get
\begin{align*}
&\left\Vert (I-P_K)\left(t\mapsto \tau \int_{-1}^t e^{\tau(t-s)\L_N}\D^j\varphi_N(s) \d s \right) \right\Vert_{\mathcal{X}^{\infty N}_{\nu}}  \\
&\qquad = \left\Vert \left\Vert  (I-P_K)\left(t\mapsto \tau \int_{-1}^t e^{\tau(t-s)\L_N}\D^j\varphi_N(s) \d s \right) \right\Vert_{C^0} \right\Vert_{\ell^1_\nu} \\
&\qquad \leq \tau\sigma_{K,0}\left\Vert \left\Vert  \frac{d}{dt}\left(t\mapsto \int_{-1}^t e^{\tau(t-s)\L_N}\D^j\varphi_N(s) \d s \right) \right\Vert_{C^0} \right\Vert_{\ell^1_\nu} \\
&\qquad = \tau\sigma_{K,0}\left\Vert \left\Vert \D^j\varphi_N(t) + \tau\L_N \int_{-1}^t e^{\tau(t-s)\L_N}\D^j\varphi_N(s) \d s  \right\Vert_{C^0} \right\Vert_{\ell^1_\nu} \\
&\qquad = \tau\sigma_{K,0}\left\Vert \left\Vert \D^j\varphi_N(t) + \tau Q \Lambda_N \int_{-1}^t e^{\tau(t-s)\Lambda_N} Q^{-1} \D^j\varphi_N(s) \d s  \right\Vert_{C^0} \right\Vert_{\ell^1_\nu} \\
&\qquad \leq \tau\sigma_{K,0}\left\Vert \vert \D_N^j\vert \left\Vert  \varphi_N \right\Vert_{C^0} + \vert Q\vert \vert\Lambda_N\vert D_N(1) \vert Q^{-1}\vert \vert \D_N^j\vert \left\Vert  \varphi_N \right\Vert_{C^0} \right\Vert_{\ell^1_\nu}. \qedhere
\end{align*}
\end{proof}

\begin{lemma}
\label{lem:interpolation_error_Cinfty}
Let $\varphi:[-1,1]\to\CC$ be an infinitely differentiable function, $\lambda\in\CC$, $\tau>0$, $q\in\NN$ and
\begin{equation*}
	\tilde C \bydef \sigma_{K,q} \left(\left \vert \tau\lambda \right \vert^{q+1}
			\frac{ e^{2\tau \Re(\lambda)} - 1}{\Re(\lambda)} \left \Vert \varphi \right \Vert_{C^{0}}  + 
			\tau\sum_{i=0}^{q} \left \vert \tau\lambda \right \vert^{i} \left \Vert \varphi^{(q-i)} \right \Vert_{C^{0}} \right),
	\end{equation*}
	where $\varphi^{(i)}$ is the $i$-th derivative of $\varphi$ and $\sigma_{K,q}$ satisfies Definition~\ref{def:interp_cste}. Then
	\begin{align*}
\left\Vert (I-P_{K}) \left(t \mapsto \tau \int_{-1}^{t} e^{ \tau(t-s)\lambda} \varphi(s) \mbox{d} s\right)\right\Vert_{C^0} \leq \tilde C.
	\end{align*}
Assume further that $\varphi$ can be analytically extended onto $\mathcal{E}_\rho$ for some $\rho>1$, and let
\begin{align*}
\zeta(x) \bydef
\left\{
	\begin{aligned}
	&\dfrac{ e^{x} -1 }{ x}  \qquad & x\neq 0,\\
	&1 \qquad & x = 0,
	\end{aligned}
\right.
\end{align*}
and
\begin{align*}
\check C \bydef \frac{2\tau(\rho+\rho^{-1}+2)}{\rho^K(\rho-1)} \zeta\left(\tau\left(\Re(\lambda) + \frac{1}{2}\sqrt{\Re(\lambda)^2(\rho+\rho^{-1})^2 + \Im(\lambda)^2 (\rho-\rho^{-1})^2}\right)\right) \sup_{z\in\mathcal{E}_\rho} \left\vert \varphi(z)\right\vert.
\end{align*}
Then 
\begin{align*}
\left\Vert (I-P_{K}) \left(t \mapsto \tau \int_{-1}^{t} e^{ \tau(t-s)\lambda} \varphi(s) \mbox{d} s\right)\right\Vert_{C^0} \leq \check C.
	\end{align*}
\end{lemma}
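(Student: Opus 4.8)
The plan is to estimate the interpolation error of the function
\[
\psi(t) \bydef \tau \int_{-1}^{t} e^{\tau(t-s)\lambda}\varphi(s)\,\d s
\]
by combining Definition~\ref{def:interp_cste} (for the first bound, involving $\sigma_{K,q}$) and Theorem~\ref{thm:analytic_intp_error} (for the second, analytic bound involving $\check C$), with Lemma~\ref{lem:interp_exp} providing the template for how to bound $e^{\tau(t-s)\lambda}$ on a Bernstein ellipse. The central observation is that $\psi$ solves the linear ODE $\psi' = \lambda\psi + \tau\varphi$ with $\psi(-1)=0$, so that $\psi^{(k)}$ can be expressed via repeated differentiation; this structure is what drives both the derivative bound needed for the first estimate and the analytic extension needed for the second.

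\medskip

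\textbf{First estimate (via $\sigma_{K,q}$).} I would first compute the derivatives of $\psi$. From $\psi' = \lambda\psi + \tau\varphi$ one gets by induction
\[
\psi^{(k+1)} = \lambda^k \psi' + \tau\sum_{i=0}^{k-1}\lambda^i \varphi^{(k-1-i)} = \lambda^{k+1}\psi + \tau\sum_{i=0}^{k}\lambda^i\varphi^{(k-i)}.
\]
Applying this with $k=q$ gives $\psi^{(q+1)} = \lambda^{q+1}\psi + \tau\sum_{i=0}^q\lambda^i\varphi^{(q-i)}$. Now estimate $\Vert\psi\Vert_{C^0}$ directly from the integral representation: $\vert\psi(t)\vert \leq \tau\Vert\varphi\Vert_{C^0}\int_{-1}^t e^{\tau(t-s)\Re(\lambda)}\,\d s \leq \tau\Vert\varphi\Vert_{C^0}\int_{-1}^1 e^{\tau(1+r)\Re(\lambda)}\,\d r$, and evaluating the elementary integral of the exponential over a length-$2$ interval yields the factor $\frac{e^{2\tau\Re(\lambda)}-1}{\Re(\lambda)}$ (being careful to absorb the one $\tau$ from $\d s = \tau\,\d r$ appropriately so that the powers of $\tau$ match the claimed $\vert\tau\lambda\vert^{q+1}$ and $\tau\vert\tau\lambda\vert^i$ weights). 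Combining the triangle inequality on $\psi^{(q+1)}$ with $\Vert f - P_K(f)\Vert_{C^0}\leq \sigma_{K,q}\Vert f^{(q+1)}\Vert_{C^0}$ from Definition~\ref{def:interp_cste} gives $\tilde C$.

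\medskip

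\textbf{Second estimate (via analyticity).} Here I would argue that $\psi$ extends analytically to $\mathcal{E}_\rho$ whenever $\varphi$ does, since $\psi(z) = \tau\int_{-1}^z e^{\tau(z-s)\lambda}\varphi(s)\,\d s$ is a well-defined holomorphic function on the (simply connected, star-shaped with respect to $-1$) domain $\mathcal{E}_\rho$. To bound $\sup_{z\in\mathcal{E}_\rho}\vert\psi(z)\vert$, parametrize the path from $-1$ to $z$ by the straight segment; writing $z = \frac12(re^{i\theta}+r^{-1}e^{-i\theta})$ as in Lemma~\ref{lem:interp_exp}, one has $\Re(\lambda(z+1)) \leq \Re(\lambda) + \frac12\sqrt{\Re(\lambda)^2(\rho+\rho^{-1})^2+\Im(\lambda)^2(\rho-\rho^{-1})^2} \bydef \mu$, and more generally along the segment $\Re(\lambda(z-s))$ is controlled by $\mu$-type quantities, so that $\vert\psi(z)\vert \leq \tau\,\mathrm{length}\cdot e^{\mu}\sup_{\mathcal{E}_\rho}\vert\varphi\vert$ up to a geometric constant. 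The factor $\zeta(\mu) = \frac{e^\mu-1}{\mu}$ arises exactly from integrating $e^{\tau(z-s)\lambda}$ along the segment (the $\tau$ in front of $\zeta$'s argument coming from the time rescaling), and the factor $\tau(\rho+\rho^{-1}+2)$ bounds the length of the integration path inside $\mathcal{E}_\rho$ plus the shift. Then apply Theorem~\ref{thm:analytic_intp_error} to $\psi$, which contributes the prefactor $\frac{4\rho^{-K}}{\rho-1}$; matching constants (the $4$ against the $2$ in $\check C$) requires tracking the geometric bounds carefully.

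\medskip

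\textbf{Main obstacle.} The routine part is the ODE/derivative bookkeeping for $\tilde C$; the delicate part is the analytic estimate, specifically getting the precise geometric constant $2\tau(\rho+\rho^{-1}+2)$ and the correct argument of $\zeta$. This requires a careful bound on $\sup_{z\in\mathcal{E}_\rho}\left\vert\int_{-1}^z e^{\tau(z-s)\lambda}\varphi(s)\,\d s\right\vert$: one must choose a good contour (the straight segment from $-1$ to $z$ stays inside the convex set $\overline{\mathcal{E}_\rho}$), bound its length by the diameter-type quantity $\frac12(\rho+\rho^{-1})+1$ times $2$, and uniformly bound $\Re(\lambda(z-s))$ along it using the same trigonometric identity as in Lemma~\ref{lem:interp_exp}. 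Reconciling all the factors of $\tau$, $2$, and $\rho+\rho^{-1}+2$ so that they land exactly as stated is where the real care is needed; everything else follows from the two interpolation-error results already established.
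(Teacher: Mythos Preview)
Your approach matches the paper's almost exactly. For $\tilde C$ the paper also bounds $\|\psi^{(q+1)}\|_{C^0}$ and uses $\|\psi\|_{C^0}\le \|\varphi\|_{C^0}\frac{e^{2\tau\Re(\lambda)}-1}{\Re(\lambda)}$; for $\check C$ it parametrizes the straight segment $[-1,z]$ by $\gamma(\tilde s)=-1+\tilde s(z+1)$, bounds the path length via $|z+1|\le \tfrac{\rho+\rho^{-1}}{2}+1$, evaluates $\int_0^1 e^{\tau\Re((z+1)\lambda)(1-\tilde s)}\,\d\tilde s$ explicitly to produce the $\zeta$ factor, then uses that $\zeta$ is increasing together with the formula for $\sup_{z\in\mathcal E_\rho}\Re((z+1)\lambda)$ from Lemma~\ref{lem:interp_exp}; the $4$ from Theorem~\ref{thm:analytic_intp_error} times $\tfrac{\rho+\rho^{-1}}{2}+1$ gives the stated $2(\rho+\rho^{-1}+2)$. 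One small slip: your ODE should read $\psi'=\tau\lambda\,\psi+\tau\varphi$ (differentiating $e^{\tau(t-s)\lambda}$ in $t$ brings down $\tau\lambda$, not $\lambda$), so the induction yields $\psi^{(q+1)}=(\tau\lambda)^{q+1}\psi+\tau\sum_{i=0}^q(\tau\lambda)^i\varphi^{(q-i)}$ directly --- the $\tau$'s in $|\tau\lambda|^i$ are not coming from a change of variable as you suggest.
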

\begin{proof}
The first estimate is a direct consequence of the interpolation error estimate described in Definition~\ref{def:interp_cste}, and of the fact that
\begin{align*}
&\left\Vert \frac{d^{q+1}}{dt^{q+1}} \left(t \mapsto \tau \int_{-1}^{t} e^{ \tau(t-s)\lambda } \varphi(s) \mbox{d} s\right) \right\Vert_{C^0}  \\
&\qquad\qquad\qquad \leq  \left( \left \vert \tau\lambda \right \vert^{q+1}
\frac{e^{2\tau\Re(\lambda)} - 1}{\Re(\lambda)}  \left \Vert \varphi \right \Vert_{C^{0}}  + \tau\sum_{i=0}^{q} \left\vert \tau\lambda \right \vert^{i} \left \Vert \varphi^{(q-i)} \right \Vert_{C^{0}} \right) .
\end{align*}
The second estimate follows from Theorem~\ref{thm:analytic_intp_error}. More precisely, for any $z\in\mathcal{E}_\rho$ we consider $\gamma(\tilde{s})=-1+\tilde{s}(z+1)$, $\tilde{s} \in[0,1]$, so that $\gamma$ parametrizes the segment $[-1,z]$, and we estimate
\begin{align*}
	 	\tau \left \vert \int_{\gamma} e^{\tau(z-y) \lambda} \varphi(y) \mbox{d}y \right \vert 
		&\leq
		\tau
		\left(\sup_{z \in \mathcal{E}_{\rho}} \vert \varphi(z) \vert \right)
		 \left( \frac{ \rho + \rho^{-1} }{2} + 1 \right)
		\int_{0}^{1} \left \vert e^{\tau(z+1)(1-s) \lambda}  \right \vert \text{d}s
		\\[2ex] &= 
		\left(\sup_{z \in \mathcal{E}_{\rho}} \vert \varphi(z) \vert  \right)
		\left( \frac{ \rho + \rho^{-1} }{2} + 1 \right)
		\left( \frac{ e^{\tau \Re( (z+1) \lambda) } - 1}{ \Re( (z+1) \lambda)} \right).
	 \end{align*} 
Since the map $\zeta$ is increasing on $\RR$,
	 \begin{align*}
	 	\sup_{z \in \mathcal{E}_{\rho} } \tau \left \vert \int_{\gamma} e^{\tau(z-y) \lambda} \varphi(y) \mbox{d}y \right \vert 
		\leq 
		\left(\sup_{z \in \mathcal{E}_{\rho}} \vert \varphi(z) \vert \right)
		\left( \frac{ \rho + \rho^{-1} }{2} + 1 \right) 
		\tau \zeta \left ( \tau\sup_{z \in \mathcal{E}_{\rho}} \Re( (z+1) \lambda)  \right), 
	 \end{align*}
	 where, as already established in the proof of Lemma~\ref{lem:interp_exp},
	 \begin{align*}
		\sup_{z \in \mathcal{E}_{\rho}} \Re( (z+1) \lambda) = 
	 	 \Re(\lambda) + \frac{1}{2} \sqrt{ \Re(\lambda)^{2} ( \rho + \rho^{-1} )^{2} + \Im(\lambda)^{2}( \rho - \rho^{-1} )^{2} }.
	 \end{align*}
From Theorem 2.7, we get
\begin{align*}
\left\Vert (I-P_{K}) \left(t \mapsto \tau \int_{-1}^{t} e^{ \tau(t-s)\lambda} \varphi(s) \mbox{d} s\right)\right\Vert_{C^0} \leq \frac{4}{\rho^K(\rho-1)} \sup_{z \in \mathcal{E}_{\rho} } \tau \left \vert \int_{\gamma} e^{\tau(z-y) \lambda} \varphi(y) \mbox{d}y \right \vert,
	\end{align*}
	which finishes the proof.
\end{proof}

\begin{remark}
\label{rem:interp_error}
Assume we need to control an interpolation error of the form $(I-P_K)g$, where the function $g$ is known, and one of the above lemmas is applicable. Rather than directly using the appropriate lemma, one may get a sharper estimate by first splitting the error as follows
\begin{align*}
\left\Vert (I-P_K)g \right\Vert_{C^0} \leq \left\Vert (P_{K_0}-P_K)g \right\Vert_{C^0} + \left\Vert (I-P_{K_0})g \right\Vert_{C^0},
\end{align*}
where $K_0$ is larger than $K$. The rational behind this splitting is that, by taking $K_0$ large enough, $P_{K_0}g$ becomes an extremely accurate approximation of $g$, and the first term $\left\Vert (P_{K_0}-P_K)g \right\Vert_{C^0}$ is essentially the interpolation error one is interested in. The main difference is now that $(P_{K_0}-P_K)g$ is merely a polynomial, hence its $C^0$ norm can very easily be estimated via~\eqref{eq:C0_norm}, and we only have to use one of the above lemmas to control the second term $\left\Vert (I-P_{K_0})g \right\Vert_{C^0}$, which is expected to be much smaller since $K_0$ is larger than $K$.
\end{remark}

\subsection{Truncation errors in Fourier space}
 
\begin{lemma}[Truncation-error]
	\label{lem:tailEstimate}
	Define the linear operators $K^{(j)}_{\infty}: \mathcal{X}^{\infty}_{\nu} \rightarrow \mathcal{X}^{\infty}_{\nu}$, $j=0,\ldots,2R-1$, by 
	\begin{align}
		\label{eq:Kj_inf}
		\left[ K^{(j)}_{\infty} \left( u \right) \left( t \right) \right]_{n} \bydef 
		\begin{cases}
			0, & 0 \leq \vert n\vert \leq N, \\[2ex]
			\displaystyle
			\tau \int_{-1}^{t} e^{\tau\lambda_{n} \left( t - s\right) } (in)^j u_{n} \left( s \right) \normalfont \mbox{d}s, & \vert n\vert > N.
		\end{cases}
	\end{align}
Then $K^{(j)}_{\infty}$ is bounded and 
	\begin{align*}
		\left \Vert K^{(j)}_{\infty} \right \Vert_{ \mathcal{B} \left( \mathcal{X}^{\infty}_{\nu}, \mathcal{X}^{\infty}_{\nu} \right) } \leq \chi^{(j)}_N \bydef  \sup_{n> N} n^j
		\frac{ 1- e^{2\tau \Re(\lambda_{n})} }{ -\Re(\lambda_{n})}.
	\end{align*}
	\begin{proof}
		Let $u \in \mathcal{X}^{\infty}_{\nu}$. Then, using that $\Re(\lambda_n)=\Re(\lambda_{-n})$ for all $\vert n\vert >N$,
		\begin{align*}
			\left \Vert K^{(j)}_{\infty} \left( u \right) \right \Vert_{\X^\infty_\nu} &\leq 
			 \tau \sum_{n=N+1}^{\infty} 	\left(\left\Vert u_n \right\Vert_{C^0}+ \left\Vert u_{-n} \right\Vert_{C^0}\right) n^j	\left\Vert t\mapsto \int_{-1}^{t} e^{\tau \Re(\lambda_{n}) \left( t - s\right) } \mbox{d}s \right\Vert_{C^0}  \nu^{n}  \\
			&=  \sum_{n=N+1}^{\infty} 	\left(\left\Vert u_n \right\Vert_{C^0}+ \left\Vert u_{-n} \right\Vert_{C^0}\right) n^j	\frac{1-e^{2\tau\Re(\lambda_n)}}{-\Re(\lambda_n)}  \nu^{n}  \\
			&\leq \chi^{(j)}_N \left\Vert u \right\Vert_{\ell^1_\nu(C^0)}. \qedhere
		\end{align*} 
	\end{proof}
\end{lemma}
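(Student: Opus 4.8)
The plan is to estimate the operator norm of $K^{(j)}_\infty$ directly from the definition of the $\left\Vert\cdot\right\Vert_{\X^\infty_\nu} = \left\Vert\cdot\right\Vert_{\ell^1_\nu(C^0)}$ norm, mode by mode, reducing everything to the scalar integral $\tau\int_{-1}^t e^{\tau\Re(\lambda_n)(t-s)}\,\d s$. First, for a fixed $u\in\X^\infty_\nu$ and each $\vert n\vert>N$, I would bound $\left\vert\left[K^{(j)}_\infty(u)(t)\right]_n\right\vert$ pointwise in $t$: pulling the absolute value inside the integral and using $\vert e^{\tau\lambda_n(t-s)}\vert = e^{\tau\Re(\lambda_n)(t-s)}$ gives
\begin{align*}
\left\vert\left[K^{(j)}_\infty(u)(t)\right]_n\right\vert \leq \tau\, n^j \left\Vert u_n\right\Vert_{C^0}\int_{-1}^t e^{\tau\Re(\lambda_n)(t-s)}\,\d s.
\end{align*}
Since $\Re(\lambda_n) = -n^{2R} + \sum_{j=0}^{2R-1}(in)^j(\bv^{(j)})_0$ has negative real part for $\vert n\vert$ large (the $-n^{2R}$ term dominates), I would note that $\Re(\lambda_n)<0$ for all $\vert n\vert>N$ is exactly the standing assumption making everything well-defined (the generation of a $C^0$ semigroup on $\ell^1_\nu$); under this sign condition the integral is monotone in $t$ and maximized at $t=1$, giving $\int_{-1}^t e^{\tau\Re(\lambda_n)(t-s)}\,\d s \leq \frac{1-e^{2\tau\Re(\lambda_n)}}{-\Re(\lambda_n)}$.

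Next I would take the $C^0$ norm in $t$ of each mode — which by the above is just the $t=1$ value of the scalar bound — and then assemble the $\ell^1_\nu$ sum over $\vert n\vert>N$. Using $\Re(\lambda_n)=\Re(\lambda_{-n})$ (clear from~\eqref{eq:eigenvalues}, since the odd-in-$n$ contributions to $\lambda_n$ are purely imaginary) I would pair the $\pm n$ terms, factor out the supremum
\begin{align*}
\chi^{(j)}_N = \sup_{n>N} n^j\,\frac{1-e^{2\tau\Re(\lambda_n)}}{-\Re(\lambda_n)},
\end{align*}
and recognize the remaining sum $\sum_{n>N}(\left\Vert u_n\right\Vert_{C^0}+\left\Vert u_{-n}\right\Vert_{C^0})\nu^n$ as $\left\Vert u\right\Vert_{\ell^1_\nu(C^0)}$ (equivalently $\left\Vert u\right\Vert_{\X^\infty_\nu}$, since $u\in\X^\infty_\nu$ has no modes with $\vert n\vert\leq N$). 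This yields $\left\Vert K^{(j)}_\infty(u)\right\Vert_{\X^\infty_\nu}\leq \chi^{(j)}_N\left\Vert u\right\Vert_{\X^\infty_\nu}$, which is the claim; boundedness of the operator follows once one knows $\chi^{(j)}_N<\infty$.

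The only genuine point requiring care — the ``main obstacle,'' though it is mild — is verifying that $\chi^{(j)}_N$ is actually finite, i.e.\ that $n^j\frac{1-e^{2\tau\Re(\lambda_n)}}{-\Re(\lambda_n)}$ stays bounded as $n\to\infty$. Here one uses that $-\Re(\lambda_n)\sim n^{2R}$ while the numerator $1-e^{2\tau\Re(\lambda_n)}\to 1$, so the general term behaves like $n^{j-2R}$, which tends to $0$ since $j\leq 2R-1<2R$; hence the supremum over $n>N$ is attained and finite. I would also remark (as in the statement's implicit setup) that the sign condition $\Re(\lambda_n)<0$ for $\vert n\vert>N$ is precisely what one arranges when choosing $N$, and that if one wanted to be fully careful one could replace $-\Re(\lambda_n)$ in denominators by $\vert\Re(\lambda_n)\vert$ and use $\frac{1-e^{-a}}{a}\leq\min(1,1/a)$ for $a>0$ to make the bound manifestly finite; but since the lemma is stated under the running assumptions this is not strictly necessary.
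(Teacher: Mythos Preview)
Your proposal is correct and follows essentially the same route as the paper: bound each mode pointwise via $\vert e^{\tau\lambda_n(t-s)}\vert=e^{\tau\Re(\lambda_n)(t-s)}$, evaluate the resulting scalar integral (maximized at $t=1$) as $\frac{1-e^{2\tau\Re(\lambda_n)}}{-\Re(\lambda_n)}$, pair the $\pm n$ contributions using $\Re(\lambda_n)=\Re(\lambda_{-n})$, and factor out the supremum $\chi^{(j)}_N$. Your additional discussion of why $\chi^{(j)}_N<\infty$ matches the paper's remark that $\chi^{(j)}_N\sim N^{-2R+j}$; one tiny notational slip is writing $n^j$ rather than $\vert n\vert^j$ before pairing, but it is inconsequential.
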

\begin{remark}
Note that 
	$\chi_N^{(j)} \sim N^{-2R+j}$ as $N\to\infty$, by \eqref{eq:eigenvalues}. An easy way to get an explicit value for $\chi_N^{(j)}$ is given in Appendix~\ref{app:chi}.
\end{remark}


\section{Bounds}
\label{sec:bounds}

In this section, we derive computable bounds $Y$, $Z$ and $W$ satisfying assumptions~\eqref{e:def_Y}-\eqref{e:def_W} of Theorem~\ref{thm:NewtonKantorovich}, for the operator $T$, the space $X=\X_\nu$ and the approximate solution $\bar u$ introduced in Section~\ref{sec:functional_setup}. Since we use here the case $M=1$, we drop the indices $i,j$ and the exponent $m$ from $Y$, $Z$ and $W$.

\subsection{Y-bound}
\label{sec:Ybounds}

In this subsection, we deal with the residual bound $Y$ satisfying assumption~\eqref{e:def_Y} of Theorem~\ref{thm:NewtonKantorovich}. First, observe that 
\begin{align*}
	T (\bu) - \bu &= 
	- A F (\bu) \\[2ex] &= 
	\left( - A_{KN} F_{KN} (\bu) \right) 
	\oplus \Pi_{\infty N} F (\bu)  
	\oplus \Pi_{\infty} F (\bu). 
\end{align*}
We are going to estimate each of the three terms separately, i.e. we derive bounds $Y_{KN}$, $Y_{\infty N}$ and $Y_{\infty}$ such that
\begin{align*}
\left\Vert A_{KN} F_{KN} (\bu) \right\Vert_{\X^{KN}_\nu} \leq Y_{KN}, \quad
\left\Vert \Pi_{\infty N} F (\bu) \right\Vert_{\X^{\infty N}_\nu} \leq Y_{\infty N}, \quad
\left\Vert \Pi_{\infty} F (\bu) \right\Vert_{\X^{\infty}_\nu} \leq Y_{\infty},
\end{align*}
so that
\begin{align*}
Y \bydef Y_{KN} + \epsilon_{\infty N}^{-1} Y_{\infty N} + \epsilon_{\infty }^{-1} Y_{\infty},
\end{align*}
satisfies~\eqref{e:def_Y}.

\subsubsection{Finite dimensional projection}

A bound for the finite dimensional part can be obtained by simply evaluating 
\begin{align*}
	Y_{KN} \bydef  \left \Vert A_{KN} F_{KN} (\bu) \right \Vert_{\mathcal{X}^{KN}_{\nu}}.
\end{align*}
In order to make this bound computable, we need to be able to rigorously enclose each entry of $F_{KN} (\bu)$, that is, to get enclosure for the vectors
\begin{align*}
\Pi_N F(\bu)(t_k) =  e^{ \tau(t_k+1) \L}\Pi_N f + \tau \Pi_N\left(\int_{-1}^{t_k} e^{ \tau(t_k-s)\L } 
		\gamma(\bu(s)) \mbox{d} s\right) - \bu \left( t_k \right),
\end{align*}
for all $0\leq k \leq K$. 
\begin{remark}
\label{rem:coeffs2values}
There are at least two convenient ways of representing (each component of) an element $\bu\in \X^{KN}_\nu$, namely via its values at the Chebyshev nodes, or via its coefficients in the Chebyshev basis. The Chebyshev coefficients representation simplifies all the norm computations, whereas the representation with the values at the nodes is the one we naturally end up with when computing the projection of $\Pi_NF(\bu)$ onto $\X^{KN}_\nu$. In practice, we therefore chose to implement $F_{KN}$ as going from Chebyshev coefficients to values at Chebyshev nodes. 
\end{remark}
Recalling that $\L$ leaves both $\mathcal{X}^{N}_{\nu}$ and $\mathcal{X}^{\infty}_{\nu}$ invariant, we have
\begin{align*}
 \Pi_N F(\bu)(t_k) =  e^{ \tau(t_k+1) \L_N}\Pi_N f + \tau \int_{-1}^{t_k} e^{ \tau(t_k-s)\L_N } \Pi_N \gamma(\bu(s)) \mbox{d} s - \bu \left( t_k \right).
\end{align*}
Therefore, we need to be able to rigorously enclose integrals of the form 
\begin{equation*}
\int_{-1}^{t_k} e^{ \tau(t_k-s)\L_N } 
		\varphi(s) \mbox{d} s,
\end{equation*}
where $\varphi(s)\in\CC^{2N-1}$ and each component of $\varphi$ is a polynomial in $s$. Here we use the fact that
\begin{equation*}
\L_N = Q \Lambda_N Q^{-1},
\end{equation*}
where $\Lambda_N= \text{diag}\left(\lambda_{-N},\ldots,\lambda_{N}\right)$, to write
\begin{equation*}
\int_{-1}^{t_k} e^{ \tau(t_k-s)\L_N } 
		\varphi(s) \mbox{d} s = Q \int_{-1}^{t_k} e^{ \tau(t_k-s)\Lambda_N } 
		Q^{-1}\varphi(s) \mbox{d} s,
\end{equation*}
where $Q^{-1}\varphi$ is another vector of polynomials, and thus the whole estimate boils down to rigorously enclosing integrals of the form
\begin{equation*}
\int_{-1}^{t_k} e^{ \tau(t_k-s)\lambda_n } 
		\varphi(s) \mbox{d} s,
\end{equation*}
where $\varphi$ is a scalar polynomial. We explain in detail in Appendix~\ref{app:quadrature} how this can be done.

\begin{remark}
We expect to get $Y_{KN}$ small provided $\bu$ is an accurate approximate solution of the (truncated) problem. In practice, if the obtained value of $Y_{KN}$ is not small enough for~\eqref{e:inequalities1} to hold, we should try to increase $K$ (or to split the time interval, see Section~\ref{sec:domaindecomposition}).
\end{remark}

\subsubsection{Interpolation error estimates}
\label{sec:Ybounds_inftyN}

We now turn our attention to the computation of the $Y_{\infty N}$ bound. We need to estimate
\begin{align*}
\left\Vert \Pi_{\infty N}  \left(T (\bu) - \bu\right) \right\Vert_{\mathcal{X}^{\infty N}_{\nu}} &= \left\Vert \Pi_{\infty N} F(\bu) \right\Vert_{\mathcal{X}^{\infty N}_{\nu}} \\
&= \left\Vert \left\Vert \Pi_N F(\bu) - \Pi_{KN} F(\bu) \right\Vert_{C^0}  \right\Vert_{\ell^1_\nu},
\end{align*}
i.e., to control the interpolation error for $\Pi_N F(\bu)$. Since $\Pi_N F(\bu)$ is a quantity that we can compute explicitly, we apply the idea presented in Remark~\ref{rem:interp_error}, i.e. we consider $K_0\geq K$ and estimate
\begin{align*}
& \left\Vert  \Pi_N F(\bu) - \Pi_{KN} F(\bu)  \right\Vert_{\ell^1_\nu(C^0)} \\
& \qquad\qquad \leq  \left\Vert \Pi_N F(\bu) - \Pi_{K_0N} F(\bu) \right\Vert_{\ell^1_\nu(C^0)} +  \left\Vert \Pi_{K_0N} F(\bu) - \Pi_{KN} F(\bu) \right\Vert_{\ell^1_\nu(C^0)}. 
\end{align*}
We then estimate $\left\Vert \Pi_{K_0N} F(\bu) - \Pi_{KN} F(\bu) \right\Vert_{\ell^1_\nu(C^0)}$ using~\eqref{eq:C0_norm}, and $\left\Vert \Pi_N F(\bu) - \Pi_{K_0N} F(\bu) \right\Vert_{\ell^1_\nu(C^0)}$ by combining Lemma~\ref{lem:interp_exp} and Lemma~\ref{lem:interpolation_error_Cinfty} to obtain the following statement.
\begin{proposition}
	\label{prop:Y_infN}
	Let $K_0 \geq K$,
	\begin{equation*}
	\tilde f \bydef Q^{-1}\Pi_N f,\quad \tilde{c}^{Y} (s) \bydef Q^{-1}
		\Pi_N \gamma(\bu(s)).
	\end{equation*}
	 For each $n\in\ZZ$ such that $\vert n\vert \leq N$, consider also $0 \leq q_n \leq K_0$, $\rho_n,\check\rho_n>1$,
	\begin{align*}
	\Omega_n = \frac{4 \rho_n^{-K_0}}{\rho_n -1} 
		\exp\left( \tau \left(\Re(\lambda_n) + \frac{1}{2}\sqrt{\Re(\lambda_n)^2(\rho_n+\rho_n^{-1})^2 + \Im(\lambda_n)^2 (\rho_n-\rho_n^{-1})^2} \right)\right) \vert \tilde f_n\vert,
	\end{align*}	
	\begin{equation*}
	\tilde C_n = \sigma_{K_0,q_n} \left(\left \vert \tau\lambda_n \right \vert^{q_n+1}
			\frac{ e^{2\tau \Re(\lambda_n)} - 1}{\Re(\lambda_n)} \left \Vert \tilde{c}^{Y}_{n} \right \Vert_{C^{0}}  + 
			\tau\sum_{i=0}^{q_n} \left \vert \tau\lambda_n \right \vert^{i} \left \Vert \left(\tilde{c}^{Y}_{n}\right)^{(q_n-i)} \right \Vert_{C^{0}} \right),
	\end{equation*}
	and
	\begin{align*}
	&\check C_n = \\
	&\quad\frac{2\tau(\check\rho_n+ \check\rho_n^{-1}+2)}{\check\rho_n^{K_0}(\check\rho_n-1)} \exp\left(\tau\left(\Re(\lambda_n) + \frac{1}{2}\sqrt{\Re(\lambda_n)^2(\check\rho_n+\check\rho_n^{-1})^2 + \Im(\lambda_n)^2 (\check\rho_n-\check\rho_n^{-1})^2}\right)\right) \sup_{z\in\mathcal{E}_{\check\rho_n}} \left\vert \tilde{c}^{Y}_{n}\right\vert.
	\end{align*}
Then
	\begin{align*}
	 & \left\Vert \Pi_{N} F(\bu) - \Pi_{K_0N} F(\bu) \right\Vert_{\ell^1_\nu(C^0)} \leq \left\Vert \vert Q \vert \left(\Omega + \min\left(\tilde C,\check C\right)\right) \right\Vert_{\ell^1_\nu},
	\end{align*}
	where $\Omega = \left(\Omega_n\right)_{-N\leq n\leq N}$, $\tilde C = \left(\tilde C_n\right)_{-N\leq n\leq N}$, $\check C = \left(\check C_n\right)_{-N\leq n\leq N}$, and $\min\left(\tilde C,\check C\right)$ has to be understood component-wise.
\end{proposition}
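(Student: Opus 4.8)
The plan is to reduce the vector‑valued interpolation error to the scalar estimates of Lemma~\ref{lem:interp_exp} and Lemma~\ref{lem:interpolation_error_Cinfty}, applied one Fourier mode at a time after diagonalizing $\L_N$. The first step is to simplify what actually needs to be interpolated. Since $\bu\in\X^{KN}_\nu$ is, componentwise, a polynomial in time of degree at most $K\leq K_0$, we have $(I-P_{K_0})\bu=0$, so the term $-\bu(t)$ in $\Pi_NF(\bu)$ contributes nothing to $\Pi_NF(\bu)-\Pi_{K_0N}F(\bu)=(I-P_{K_0})\Pi_NF(\bu)$. Using that $\L$ leaves $\X^N_\nu$ invariant, we are left with
\[
\Pi_NF(\bu)-\Pi_{K_0N}F(\bu)=(I-P_{K_0})\Bigl(t\mapsto e^{\tau(t+1)\L_N}\Pi_Nf+\tau\int_{-1}^{t}e^{\tau(t-s)\L_N}\Pi_N\gamma(\bu(s))\,\d s\Bigr).
\]

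Next I would diagonalize: writing $\L_N=Q\Lambda_NQ^{-1}$ and using that $P_{K_0}$ acts componentwise and therefore commutes with the constant matrix $Q$, the right‑hand side equals $Q\,(I-P_{K_0})\bigl(t\mapsto e^{\tau(t+1)\Lambda_N}\tilde f+\tau\int_{-1}^{t}e^{\tau(t-s)\Lambda_N}\tilde c^Y(s)\,\d s\bigr)$ with $\tilde f$ and $\tilde c^Y$ as in the statement. Because $\Lambda_N$ is diagonal this decouples mode by mode, the $n$-th component being $(I-P_{K_0})\bigl(t\mapsto e^{\tau(t+1)\lambda_n}\tilde f_n+\tau\int_{-1}^{t}e^{\tau(t-s)\lambda_n}\tilde c^Y_n(s)\,\d s\bigr)$. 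I would then bound the two summands separately. For the first, Lemma~\ref{lem:interp_exp} with $K=K_0$ and free parameter $\rho_n>1$ gives directly $\bigl\Vert(I-P_{K_0})(t\mapsto e^{\tau(t+1)\lambda_n}\tilde f_n)\bigr\Vert_{C^0}\leq\Omega_n$. For the Duhamel integral, note that $\tilde c^Y_n$ is a polynomial in $s$ (as each $g^{(j)}$ is polynomial and $\bu(s)$ is a polynomial with finitely many Fourier modes), hence entire, so both halves of Lemma~\ref{lem:interpolation_error_Cinfty} apply for any $q_n\in\{0,\dots,K_0\}$ and $\check\rho_n>1$: the first gives $\tilde C_n$ verbatim, and the second gives a bound identical to $\check C_n$ except with $\zeta\bigl(\tau(\Re\lambda_n+\tfrac12\sqrt{\cdots})\bigr)$ in place of $\exp\bigl(\cdots\bigr)$, which I would then weaken using $\zeta(x)\leq e^x$ for $x\geq0$ — legitimate here since $\sqrt{\Re(\lambda_n)^2(\check\rho_n+\check\rho_n^{-1})^2+\Im(\lambda_n)^2(\check\rho_n-\check\rho_n^{-1})^2}\geq2|\Re\lambda_n|$ forces the argument of $\zeta$ to be nonnegative. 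Taking the minimum, the $n$-th component has $C^0$ norm at most $\Omega_n+\min(\tilde C_n,\check C_n)$.

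Finally I would reassemble. Denoting by $w$ the vector‑valued function $(I-P_{K_0})(\cdots)$ and by $\Vert w\Vert_{C^0}$ the vector of its componentwise $C^0$ norms, the pointwise inequality $\Vert(Qw)_n\Vert_{C^0}\leq\sum_{|m|\leq N}|Q_{n,m}|\,\Vert w_m\Vert_{C^0}=(|Q|\,\Vert w\Vert_{C^0})_n$ and the monotonicity of $\Vert\cdot\Vert_{\ell^1_\nu}$ give $\Vert Qw\Vert_{\ell^1_\nu(C^0)}\leq\bigl\Vert\,|Q|\,\Vert w\Vert_{C^0}\bigr\Vert_{\ell^1_\nu}\leq\bigl\Vert\,|Q|\,(\Omega+\min(\tilde C,\check C))\bigr\Vert_{\ell^1_\nu}$, which is the claimed bound. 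I do not expect a genuine obstacle here: the only conceptual points are that $\bu$ drops out of the interpolation error and that $Q$ passes through $P_{K_0}$, and for the implementation one needs rigorous enclosures of $\sup_{z\in\mathcal{E}_{\check\rho_n}}|\tilde c^Y_n(z)|$ and of $\Vert(\tilde c^Y_n)^{(k)}\Vert_{C^0}$, both available from Lemma~\ref{lem:upperbounds_norms} since $\tilde c^Y_n$ is an explicit polynomial. If anything is delicate it is the practical choice of the free parameters $K_0$, $\rho_n$, $\check\rho_n$ and $q_n$, which is an optimization rather than a mathematical difficulty.
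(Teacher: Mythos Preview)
Your proof is correct and follows essentially the same route as the paper: drop the polynomial $\bu$, diagonalize via $Q$, apply Lemma~\ref{lem:interp_exp} and Lemma~\ref{lem:interpolation_error_Cinfty} componentwise, then reassemble through $|Q|$. You are in fact slightly more careful than the paper in explicitly justifying the passage from the $\zeta$ appearing in Lemma~\ref{lem:interpolation_error_Cinfty} to the $\exp$ appearing in the definition of $\check C_n$, via $\zeta(x)\leq e^x$ for $x\geq 0$ together with the observation that the argument is nonnegative.
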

 
	\begin{proof}
		First note that 
		\begin{align*}
			& \Pi_N F(\bu) - \Pi_{K_0N} F(\bu) \nonumber\\
			 &=
			\left(I - P_{K_0}\right) \left( t \mapsto e^{ \tau(t+1) \L_N}\Pi_N f + \tau \int_{-1}^{t} e^{ \tau(t-s)\L_N } 
		 \Pi_N\gamma(\bu(s)) \mbox{d} s \right)  \nonumber\\
		&= Q\left(I - P_{K_0}\right) \left( t \mapsto e^{ \tau(t+1) \Lambda_N} \right) \tilde f  + 
		Q\left(I - P_{K_0}\right) \left( t \mapsto \tau \int_{-1}^{t} e^{ \tau(t-s)\Lambda_N } \tilde{c}^{Y}(s) \mbox{d} s \right). 
		\end{align*}
Therefore,
\begin{align}
		\label{eq:Y_infN}
			&\left\vert \Pi_N F(\bu) - \Pi_{K_0N} F(\bu) \right\vert \nonumber\\
		&\leq  \vert Q \vert \left( \left\vert \left(I - P_{K_0}\right) \left( t \mapsto e^{ \tau(t+1) \Lambda_N}\right) \right\vert \left\vert \tilde f \right\vert + \left\vert\left(I - P_{K_0}\right) \left( t \mapsto \tau \int_{-1}^{t} e^{ \tau(t-s)\Lambda_N } \tilde{c}^{Y}(s) \mbox{d} s \right)\right\vert\right),
		\end{align}
where the absolute values have to be understood component-wise. We then apply, component-wise, Lemma~\ref{lem:interp_exp} and Lemma~\ref{lem:interpolation_error_Cinfty} to the first and the second term respectively.
	\end{proof}	
\begin{remark}
	For each component, it is not clear a priori which choice of $q_n$ leads to the smallest $\tilde C_n$, so in practice we just compute $\tilde C_n$ for all $q_n$ between $1$ and $K_0$ and then keep the smallest result. Similarly, we approximately optimize for each $\rho_n$ and $\check\rho_n$. Finally, in practice we replace the $C^0$ norms and the supremum over Bernstein ellipses by the easy-to-evaluate upper-bounds provided in Lemma~\ref{lem:upperbounds_norms}.
\end{remark}

\subsubsection{Truncation in phase space}

Next, we compute a bound for the residual associated to the truncation in phase space. First note that
\begin{equation*}
 \Pi_{\infty} \left(T(\bu)-\bu\right) = \Pi_{\infty} F(\bu).
 \end{equation*} 
Therefore, for each $\vert n\vert > N$ we have
\begin{align*}
F_n (\bu)(t) = e^{\tau\lambda_n(t+1)} f_{n} + \tau \displaystyle \int_{-1}^{t} e^{\lambda_n \tau\left( t - s \right) } \gamma_n(\bu(s)) \mbox{d} s,
\end{align*}
which we estimate by
\begin{align*}
\left\Vert F_n\left(\bu\right) \right\Vert_{C^0} &\leq e^{2\tau\Re(\lambda_n)^+}\vert f_n\vert +  \left\Vert \gamma_{n} (\bu)\right\Vert_{C^0} \frac{e^{2\tau\Re(\lambda_n)}-1}{\Re(\lambda_n)},
\end{align*}
where $\Re(\lambda_n)^+\bydef  \max(\Re(\lambda_n),0)$. We then get the $Y_\infty$ bound by taking the $\ell^1_\nu$ norm of the right-hand side. Notice that $\gamma (\bu)$ only has finitely many non-zero coefficients, hence this $\ell^1_\nu$ norm amounts to a finite computation. Assuming we have a bound on the $\ell^1_\nu$ norm of the tail of $f$, this contributes to the $\X^{\infty}_\nu$ bound on the tail of $F$ after estimating $e^{2\tau\Re(\lambda_n)^+}$ uniformly for $|n|>N$.

\subsection{Z-bounds}
\label{sec:Zbounds}

In this subsection, we now focus on the bound $Z$ satisfying assumption~\eqref{e:def_Z} of Theorem~\ref{thm:NewtonKantorovich}.
In order to estimate the norm of $DT(\bu)$, we take an arbitrary $h$ in $B_{1,\epsilon}\left(0\right)$ (see~\eqref{eq:def_ball}), and we again derive three separate estimates, namely $Z_{KN}$, $Z_{\infty N}$ and $Z_{\infty}$ such that
\begin{align*}
\left\Vert \Pi_{K N} DT(\bu)h \right\Vert_{\X^{KN}_\nu} \leq Z_{KN}, \quad
\left\Vert \Pi_{\infty N} DT(\bu)h \right\Vert_{\X^{\infty N}_\nu} \leq Z_{\infty N}, \quad
\left\Vert \Pi_{\infty} DT(\bu)h \right\Vert_{\X^{\infty}_\nu} \leq Z_{\infty},
\end{align*}
so that
\begin{align}
\label{eq:Z_3parts}
Z \bydef Z_{KN} + \epsilon_{\infty N}^{-1} Z_{\infty N} + \epsilon_{\infty }^{-1} Z_{\infty},
\end{align}
satisfies~\eqref{e:def_Z}. For each of these bounds, it will be helpful, recalling the Definition~\ref{def:approx_DF} of $\widehat{DF}$, to rewrite $DT(\bu)$ as 
\begin{align*}
	DT \left( \bu\right)h  &= 
	\left( I - A \widehat{DF} \right) - A \left(DF \left( \bu \right) - \widehat{DF} \right).
\end{align*}

\begin{remark}
From~\eqref{e:inequalities1} (with $M=1$), we see that we must have $Z<1$ in order to apply Theorem~\ref{thm:NewtonKantorovich}. When deriving the estimates $Z_{KN}$, $Z_{\infty N}$ and $Z_{\infty}$, we are going to see how the choices we made when defining $\L$ are instrumental in achieving that goal. A careful choice of the weights $\epsilon_{\infty N}$ and $\epsilon_{\infty}$ is also helpful in order to make $Z$ small. However, unlike what~\eqref{eq:Z_3parts} might suggest, we cannot simply take  these weights arbitrarily large, as they will also influence the bounds $Z_{KN}$, $Z_{\infty N}$ and $Z_{\infty}$ themselves (as well as the forthcoming bound $W$). Since this dependency is somewhat intricate and difficult to study a priori, in practice we have to experiment with different values of these weights in order to select suitable ones.
\end{remark}

\subsubsection{Finite dimensional projection}

\label{sec:Zbounds_KN}

By definition of $A$ and $\widehat{DF}$, 
\begin{align}\label{eq:Z0}
		\left \Vert \Pi_{KN} \left( I - A \widehat{DF} \right)h \right \Vert_{\mathcal{X}^{KN}_{\nu}} &\leq
		\left \Vert I_{KN} - A_{KN}DF_{KN} \left( \bu \right) \right \Vert_{
		\mathcal{B} \left( \mathcal{X}^{KN}_{\nu},\mathcal{X}^{KN}_{\nu} \right)},
	\end{align}
which is just a finite computation. In order to obtain the bound $Z_{KN}$, the main task is therefore to enclose
\begin{equation*}
 \Pi_{KN} \left(DF(\bu)-\widehat{DF}\right) h =\Pi_{KN} DF(\bu) \left(\Pi_{\infty N} + \Pi_\infty\right)(h),
\end{equation*}
for all $h \in B_{1,\epsilon}(0)$. We then have to multiply the result by $\vert A_{KN}\vert$ in order to get a bound on 
\begin{equation*}
\left\Vert \Pi_{KN} A \left( DF \left( \bu \right) - \widehat{DF} \right)h \right\Vert_{\mathcal{X}^{KN}_{\nu}}.
\end{equation*}
%
We compute
\begin{align*}
\Pi_{N} \left[ DF(\bu) \left(\Pi_{\infty N} + \Pi_\infty\right)(h)\right] (t_k) 
&= \tau\int_{-1}^{t_k} e^{\tau(t_k-s)\L_N}\Pi_N D\gamma(\bu(s))\left(\Pi_{\infty N} + \Pi_\infty\right)(h)(s)\d s  \\
&= \tau Q \int_{-1}^{t_k} e^{\tau(t_k-s)\Lambda_N}Q^{-1} \Pi_N D\gamma(\bu(s))\left(\Pi_{\infty N} + \Pi_\infty\right)(h)(s) \mbox{d} s,
\end{align*}
where, according to~\eqref{eq:Dgamma},
\begin{align*}
&\Pi_N D\gamma(\bu(s))\left(\Pi_{\infty N} + \Pi_\infty\right)(h)(s) = \\
&\qquad \sum_{j=0}^{2R-1} \D^j \Pi_N\left[\left(\left(g^{(j)}\right)'(\bu(s)) - \bv^{(j)}\right) \ast \Pi_{\infty N}h(s) + \left(g^{(j)}\right)'(\bu(s)) \ast \Pi_{\infty}h(s) \right] - \R_N \Pi_{\infty N} h(s).
\end{align*}
By Lemma~\ref{lem:convo_with_h_split} we have that
\begin{align*}
&\left\Vert \Pi_N\left[\left(\left(g^{(j)}\right)'(\bu(s)) - \bv^{(j)}\right) \ast \Pi_{\infty N}h(s) + \left(g^{(j)}\right)'(\bu(s)) \ast \Pi_{\infty}h(s) \right] \right\Vert_{C^0} \\ &\qquad\qquad\qquad\qquad\qquad\qquad\qquad\qquad\qquad \leq  \Pi_N\Phi^\epsilon\left(\left(g^{(j)}\right)'(\bu) - \bv^{(j)},\left(g^{(j)}\right)'(\bu) \right),
\end{align*}
and therefore, recalling that $D_N(t)$ is defined in~\eqref{eq:DN}, and using Lemma~\ref{lem:Upsilon}, we end up with
\begin{align}
\label{eq:Z1_KN}
&\left\vert \Pi_{N} \left[ DF(\bu) \left(\Pi_{\infty N} + \Pi_\infty\right)(h)\right] (t_k) \right\vert \nonumber\\
&\qquad \leq \tau\vert Q \vert  \int_{-1}^{t_k} e^{\tau(t_k-s)\Re(\Lambda_N)} \mbox{d} s \nonumber\\
&\qquad\qquad \times \left(\vert Q^{-1}\vert \sum_{j=0}^{2R-1}\vert \D^j\vert  \Pi_N\Phi^\epsilon\left(\left(g^{(j)}\right)'(\bu) - \bv^{(j)}, \left(g^{(j)}\right)'(\bu)\right) + \left\vert Q^{-1}\R_N \right\vert \left\Vert \Pi_{\infty N} h \right\Vert_{C^0} \right) \nonumber\\
&\qquad \leq \vert Q \vert\, D_N(t_k)  \left(\vert Q^{-1} \vert \sum_{j=0}^{2R-1} \vert \D^j\vert \Pi_N\Phi^\epsilon\left(\left(g^{(j)}\right)'(\bu) - \bv^{(j)}, \left(g^{(j)}\right)'(\bu)\right)  + \epsilon_{\infty N} \Upsilon\left(Q^{-1}\R_N \right) \right).
\end{align}

\begin{remark}
A careful inspection of this bound reveals why we can hope it to be small, so that $Z_{KN} <1$. First, the last term $\epsilon_{\infty N} \Upsilon\left(Q^{-1}\R_N \right)$ should be negligible, as $\R_N$ should be very small: it simply accounts for the fact that we do not necessarily diagonalize $\tL_N$ exactly when defining $\L$, see Section~\ref{sec:defL}. More importantly, the term $\Pi_N\Phi^\epsilon\left(\left(g^{(j)}\right)'(\bu) - \bv^{(j)}, \left(g^{(j)}\right)'(\bu)\right)$ is expected to be small because we take $\bv^{(j)}$ as a constant-in-time approximation of $\left(g^{(j)}\right)'(\bu)$. Notice that, if we had taken $\L$ to be simply $(-1)^{R+1} \D^{2R}$, then there would be no $-\bv$ terms, and the only way of making $\Pi_N\Phi^\epsilon\left(\left(g^{(j)}\right)'(\bu), \left(g^{(j)}\right)'(\bu)\right)$ small would have been to take $\epsilon_{\infty N}$ small, which would then be detrimental for the $\epsilon_{\infty N}^{-1}$ factor in $Z$, see~\eqref{eq:Z_3parts}. We also point out that this estimate will benefit from the domain decomposition procedure (presented in details in Section~\ref{sec:domaindecomposition}): by splitting the time interval into smaller subintervals, and by choosing the $\bv^{(j)}$ piece-wise constant, we will be able to get a more accurate approximation of $\left(g^{(j)}\right)'(\bu)$. 
\end{remark}

\subsubsection{Interpolation error estimates}

Next, regarding the projection onto $\mathcal{X}^{\infty N}_{\nu}$ we first notice that 
\begin{align*}
\Pi_{\infty N} \left( I - A \widehat{DF} \right) =0.
\end{align*}
Therefore, we have to bound, for any $h \in B_{1,\epsilon}(0)$,
	\begin{align*}
		& \left \Vert \Pi_{\infty N} A \left( DF \left( \bu \right) - \widehat{DF} \right) h \right \Vert_{\mathcal{X}^{\infty N}_{\nu}} \nonumber\\
		& \qquad\qquad\qquad = \left \Vert \Pi_{\infty N} \left( DF \left( \bu \right)h + h \right) \right \Vert_{\mathcal{X}^{\infty N}_{\nu}} \nonumber\\
		&  \qquad\qquad\qquad = \left\Vert \left\Vert \left(I-P_K\right) \left(t\mapsto \tau Q\int_{-1}^t e^{\tau(t-s)\Lambda_N} Q^{-1}  \Pi_N D\gamma(\bu(s))h(s) \mbox{d} s \right) \right\Vert_{C^0} \right\Vert_{\ell^1_{\nu}}, 
%
%
	\end{align*}
	which we do by using Lemma~\ref{lem:interpolation_error_N}, and then Lemma~\ref{lem:convo_with_h_split}, namely
\begin{align}
	\label{eq:Z1inftyN}
	& \left \Vert \Pi_{\infty N} A \left( DF \left( \bu \right) - \widehat{DF} \right) h \right \Vert_{\mathcal{X}^{\infty N}_{\nu}} \nonumber\\
	& \leq \tau\sigma_{K,0} \Bigg(\sum_{j=0}^{2R-1}\left\Vert\vert\D_N^j\vert + \vert Q\vert \vert \Lambda_N\vert\, D_N(1) \,\vert Q^{-1}\vert \vert\D_N^j\vert \right\Vert_{B(\ell^1_{\nu},\ell^1_{\nu})}  \nonumber \\
	&\qquad\qquad\quad \times \left\Vert \Pi_N \left[\left(\left(g^{(j)}\right)'(\bu) - \bv^{(j)}\right) \ast \Pi_N h + \left(g^{(j)}\right)'(\bu) \ast \Pi_\infty h \right] \right\Vert_{\ell^{1}_{\nu}(C^0)} \nonumber \\
	&\qquad\qquad\quad + \left\Vert I_N + \vert Q\vert \vert \Lambda_N\vert\, D_N(1) \,\vert Q^{-1}\vert \right\Vert_{B(\ell^1_{\nu},\ell^1_{\nu})} \left\Vert\R_N \Pi_N h \right\Vert_{\ell^1_\nu(C^0)}\Bigg)  \nonumber \\
	& \leq \tau\sigma_{K,0} \Bigg(\sum_{j=0}^{2R-1}\left\Vert\vert\D_N^j\vert + \vert Q\vert \vert \Lambda_N\vert\, D_N(1) \,\vert Q^{-1}\vert \vert\D^j_N\vert \right\Vert_{B(\ell^1_{\nu},\ell^1_{\nu})} \tilde\Phi^\epsilon\left(\left(g^{(j)}\right)'(\bu) - \bv^{(j)},\left(g^{(j)}\right)'(\bu)\right) \nonumber \\
	&\qquad\qquad\quad +  \max(1,\epsilon_{\infty N})\left\Vert I_N + \vert Q\vert \vert \Lambda_N\vert\, D_N(1) \,\vert Q^{-1}\vert \right\Vert_{B(\ell^1_{\nu},\ell^1_{\nu})} \left\Vert\R_N \right\Vert_{B(\ell^1_{\nu},\ell^1_{\nu})} \Bigg),
\end{align}
which gives us the $Z_{\infty N}$ estimate.

\begin{remark}
As was the case for $Z_{KN}$, we see here that the definition of $\L$ and the corresponding choice of $\bv$ helps to make $Z_{\infty N}$ smaller. Also, we see here the influence of $K$: the larger we take~$K$, the smaller the constant $\sigma_{K,0}$ becomes. In Section~\ref{sec:domaindecomposition}, we will introduce another option to make  $Z_{\infty N}$ smaller, namely to subdivide the time interval.
\end{remark}

\subsubsection{Truncation in phase space}
\label{sec:Zinf}

Again, we first notice that 
\begin{align*}
\Pi_{\infty} \left( I - A \widehat{DF} \right) =0.
\end{align*}
Therefore, our last remaining task regarding the $Z$ bound is to estimate, still for any $h \in B_{1,\epsilon}(0)$,
	\begin{align*}
		\left \Vert \Pi_{\infty} A \left( DF \left( \bu \right) - \widehat{DF} \right) h \right \Vert_{\mathcal{X}^\infty_{\nu}} =
		\left \Vert \Pi_{\infty} \left( DF \left( \bu \right)h + h \right) \right \Vert_{\mathcal{X}^\infty_{\nu}}.
	\end{align*}
Introducing $\tilde g^{(j)}\in\X_{\nu}$, $j=0,\ldots,2R-1$, defined by
\begin{equation*}
\tilde g^{(j)}_n(s) \bydef \left\{
\begin{aligned}
&\left(\left(g^{(j)}\right)'(\bu(s))-\bv^{(j)}\right)_0 \quad & n=0 \\
&\left(\left(g^{(j)}\right)'(\bu(s))\right)_n \quad & n\neq 0, \\
\end{aligned}
\right.
\end{equation*}
we have for all $\vert n\vert >N$ and $t\in [-1,1]$,
\begin{align*}
 ( DF \left( \bu \right)h + h)_n(t)  &= \tau  \int_{-1}^t e^{\tau(t-s)\lambda_n} \left(\sum_{j=0}^{2R-1}(in)^j \left(\tilde g^{(j)}(s)\ast h(s)\right)_n \right) \mbox{d} s   .
 \end{align*}
By Lemmas~\ref{lem:tailEstimate} and~\ref{lem:norms} and the Banach algebra property, we infer that
\begin{align*}
\left\Vert \Pi_{\infty} (DF \left( \bu \right)h + h) \right\Vert_{\X^\infty_{\nu}} 
&\leq  \sum_{j=0}^{2R-1}\chi_N^{(j)} \left\Vert \tilde g^{(j)} 
\ast 
h \right\Vert_{\ell^1_\nu(C^0)}
\leq  \vartheta_\epsilon  \sum_{j=0}^{2R-1}\chi_N^{(j)} \left\Vert \left\Vert \tilde g^{(j)} \right\Vert_{C^0}\right\Vert_{\ell^1_\nu},
\end{align*}
which gives us the $Z_\infty$ bound.

\begin{remark}
Looking at the constants $\chi_N^{(j)}$ (see Lemma~\ref{lem:tailEstimate}), we see that $Z_\infty$ can be made as small as we want by taking $N$ large enough.
\end{remark}

\subsection{W-bound}
\label{sec:Wbounds}

In this subsection, we finally derive a computable bound $W$ satisfying assumption~\eqref{e:def_W} of Theorem~\ref{thm:NewtonKantorovich}.
In order to estimate the norm of $DT(u) - DT(\bu)$, for any $u$ in $B_{\rstar,\epsilon}(\bu)$, we take arbitrary $h$ and $v$ in $B_{1,\epsilon}\left(0\right)$, and we again derive three separate estimates, namely $W_{KN}$, $W_{\infty N}$ and $W_{\infty}$ such that
\begin{align*}
\left\Vert \Pi_{K N} \left(DT(\bu+\rstar v) - DT(\bu)\right)h \right\Vert_{\X^{KN}_\nu} & \leq W_{KN}\, \rstar\left\Vert v\right\Vert_{\X_\nu}, \\
\left\Vert \Pi_{\infty N} \left(DT(\bu+\rstar v) - DT(\bu)\right)h \right\Vert_{\X^{\infty N}_\nu} &  \leq W_{\infty N}\, \rstar\left\Vert v\right\Vert_{\X_\nu}, \\
\left\Vert \Pi_{\infty} \left(DT(\bu+\rstar v) - DT(\bu)\right)h \right\Vert_{\X^{\infty}_\nu} & \leq W_{\infty}\, \rstar\left\Vert v\right\Vert_{\X_\nu},
\end{align*}
so that
\begin{align*}
W \bydef W_{KN} + \epsilon_{\infty N}^{-1} W_{\infty N} + \epsilon_{\infty }^{-1} W_{\infty}
\end{align*}
satisfies~\eqref{e:def_W}.

We start from the fact that $DT(\bu+\rstar v) - DT(\bu) = -A \left(DF(\bu+\rstar v) - DF(\bu)\right)$, which will essentially be controlled by a bound on the second derivative $D^2F$. We compute 
\begin{align}
	& \left[ \left(DF \left( \bu + \rstar v \right) - DF \left( \bu \right) \right)h \right](t) \nonumber \\[1ex] 
	 & \qquad = 
	\tau \int_{-1}^{t} e^{\tau \left( t - s\right) \L } 
	\biggl( D\gamma \left( \bu\left(s\right) + \rstar v\left(s\right) \right) - D\gamma \left( \bu \left(s\right) \right) \biggr) h(s) \ \mbox{d}s
	\nonumber \\[1ex] 
	& \qquad = 	
	\tau\rstar \int_{-1}^{t} e^{\tau \left( t - s\right) \L} 
	 \int_{0}^{1} D^{2}\gamma \left( \bu \left(s\right) + \tilde{s} \rstar v\left(s\right) \right) 
		\left[ h\left(s\right), v\left(s\right) \right] \ \mbox{d} \tilde{s} \ \mbox{d}s
	\nonumber \\[1ex]
	& \qquad = 	
	\tau\rstar \int_{-1}^{t} e^{\tau \left( t - s\right) \L} \sum_{j=0}^{2R-1} \D^j
	 \int_{0}^{1}  \left(g^{(j)}\right)'' \! \left( \bu \left(s\right) + \tilde{s} \rstar v\left(s\right)\right) \ast v(s) \ast h(s)  \ \mbox{d} \tilde{s} \ \mbox{d}s .
	\label{eq:D2F}
\end{align}

\subsubsection{Finite dimensional projection}
\label{sec:Wbounds_KN}

Let us start with estimating the finite dimensional projection of~\eqref{eq:D2F}:
\begin{align}
\label{eq:Z2_KN}
&\left\vert \Pi_N \tau\rstar\left(\int_{-1}^{t_k} e^{\tau \left( t_k - s\right) \L} 
	 \sum_{j=0}^{2R-1} \D^j
	 \int_{0}^{1}  \left(g^{(j)}\right)'' \left( \bu \left(s\right) + \tilde{s} \rstar v\left(s\right)\right) \ast v(s) \ast h(s)  \ \mbox{d} \tilde{s} \ \mbox{d}s \right) \right\vert  \nonumber\\
	 &\quad \leq \rstar\sum_{j=0}^{2R-1}\tau\left\vert Q \int_{-1}^{t_k} e^{\tau \left( t_k - s\right) \Lambda_N} Q^{-1}\D^j
	 \int_{0}^{1} \Pi_N \left( \left(g^{(j)}\right)'' \left( \bu \left(s\right) + \tilde{s} \rstar v\left(s\right)\right) \ast v(s) \ast h(s)\right) \ \mbox{d} \tilde{s} \ \mbox{d}s \right\vert  \nonumber\\
	 &\quad \leq \rstar\sum_{j=0}^{2R-1} \vert Q\vert\, D_N(t_k) \,\vert Q^{-1}\vert\  \vert \D^j\vert \,
	  \Pi_{N} \left(
			 \left \vert \left( g^{(j)} \right)'' \right \vert \left( \Vert \bar u \Vert_{C^{0}} + \rstar \Vert v \Vert_{C^{0}} \right) 
			 \ast \Vert v \Vert_{C^{0}} \ast \Vert h \Vert_{C^{0}}
		\right),
\end{align}
where $\left \vert \left( g^{(j)} \right)'' \right \vert$ is a polynomial defined as in Lemma~\ref{lem:Theta}.
We then consider the operators $\Xi^{(j)} : \CC^{2N+1}\to\X^{KN}_{\nu} $ defined by
\begin{equation}\label{e:Xi}
\Xi^{(j)} \bydef
\begin{pmatrix}
\vert Q\vert\, D_N(t_0) \,\vert Q^{-1}\vert\  \vert \D^j\vert \\ 
\vdots \\ 
\vert Q\vert\, D_N(t_K) \,\vert Q^{-1}\vert\  \vert \D^j\vert
\end{pmatrix},
\end{equation}
and introduce
\begin{equation*}
B^{(j)} = \begin{pmatrix}
B^{(j)}_0 \\ \vdots \\ B^{(j)}_K
\end{pmatrix}
= \vert A_{KN} \vert \,   \Xi^{(j)}  
\end{equation*}
with $B^{(j)}_k : \CC^{2N+1} \to \CC^{2N+1}$.
\begin{remark}
As discussed in Remark~\ref{rem:coeffs2values}, $F_{KN}$ goes from Chebyshev coefficients to values at Chebyshev nodes, and therefore so does $DF_{KN}(\bar{u})$. Since $A_{KN}$ acts as an approximate inverse of $DF_{KN}(\bar{u})$, we construct it to go from values at Chebyshev nodes back to Chebyshev coefficients, which is why we get different Chebyshev coefficients $B^{(j)}_k$ in $B^{(j)}$, whereas we had values at Chebyshev nodes $t_k$ in $\Xi^{(j)}$.
\end{remark}
We now use Lemma~\ref{lem:norms} and Lemma~\ref{lem:Theta} to obtain
\begin{align*}
&\left\Vert \Pi_{KN} \left(A\left[ \left(DF \left( \bu + \rstar v \right) - DF \left( \bu \right) \right)h \right]\right)  \right\Vert_{\X^{KN}_\nu}  \\
&\leq \rstar\sum_{j=0}^{2R-1} \left\Vert B^{(j)} \Pi_N \left(
			 \left \vert \left( g^{(j)} \right)'' \right \vert \left( \Vert \bar u \Vert_{C^{0}} + \rstar \Vert v \Vert_{C^{0}} \right)  \ast \Vert v\Vert_{C^0} \ast \Vert h\Vert_{C^0}\right)\right\Vert_{\X^{KN}_\nu} \\
&\leq \rstar\sum_{j=0}^{2R-1} \left(\left\Vert B^{(j)}_0 \right\Vert_{B(\ell^1_\nu,\ell^1_\nu)} + 2\sum_{k=1}^K \left\Vert B^{(j)}_k \right\Vert_{B(\ell^1_\nu,\ell^1_\nu)}\right) \\
&\hspace*{3cm} \times \left\Vert \Pi_N \left( \left \vert \left( g^{(j)} \right)'' \right \vert \left( \Vert \bar u \Vert_{C^{0}} + \rstar \Vert v \Vert_{C^{0}} \right) \ast \Vert v\Vert_{C^0} \ast \Vert h\Vert_{C^0}\right) \right\Vert_{\ell^1_\nu} \\
&\leq \rstar\left\Vert v\right\Vert_{\X_\nu} \vartheta_\epsilon^2 \sum_{j=0}^{2R-1} \left(\left\Vert B^{(j)}_0 \right\Vert_{B(\ell^1_\nu,\ell^1_\nu)} + 2\sum_{k=1}^K \left\Vert B^{(j)}_k \right\Vert_{B(\ell^1_\nu,\ell^1_\nu)}\right) \left\vert \left(g^{(j)}\right)'' \right\vert \left( \left\Vert \bu\right\Vert_{\ell^1_\nu(C^0)} + \vartheta_\epsilon \rstar\right),
\end{align*}
which, after removing the $\rstar \Vert v \Vert_{\X_\nu}$ factor, gives the $W_{KN}$ bound.

\subsubsection{Interpolation error estimates}

Next, we estimate
\begin{equation*}
\left\Vert \Pi_{\infty N} A \left(DF \left( \bu + \rstar v \right) - DF \left( \bu \right) \right)h  \right\Vert_{\X^{\infty N}_\nu} = \left\Vert \Pi_{\infty N} \left(DF \left( \bu + \rstar v \right) - DF \left( \bu \right) \right)h  \right\Vert_{\X^{\infty N}_\nu},
\end{equation*}
by using Lemma~\ref{lem:interpolation_error_N} on~\eqref{eq:D2F}, and then once again Lemma~\ref{lem:norms} and Lemma~\ref{lem:Theta}. This yields
\begin{align*}
& \left\Vert \Pi_{\infty N} A \left(DF \left( \bu + \rstar v \right) - DF \left( \bu \right) \right)h  \right\Vert_{\X^{\infty N}_\nu} \\
& \leq \rstar\tau\sigma_{K,0}\sum_{j=0}^{2R-1}\left\Vert \vert \D^j\vert +\vert Q\vert \vert \Lambda_N \vert D_N(1) \vert Q^{-1}\vert \vert \D^j\vert \right\Vert_{B(\ell^1_\nu,\ell^1_\nu)} \\
&\qquad\qquad\qquad\quad \times \left\Vert \left\Vert  \Pi_N \int_{0}^{1}  \left(g^{(j)}\right)'' \left( \bu \left(t\right) + \tilde{s} r v\left(t\right)\right) \ast v(t) \ast h(t) \ \mbox{d} \tilde{s}  \right\Vert_{C^0} \right\Vert_{\ell^1_\nu} \\
& \leq \rstar\left\Vert v\right\Vert_{\X_\nu}\vartheta_\epsilon^2  \tau\sigma_{K,0}\sum_{j=0}^{2R-1}\left\Vert \vert \D^j\vert +\vert Q\vert \vert \Lambda_N \vert D_N(1) \vert Q^{-1}\vert \vert \D^j\vert \right\Vert_{B(\ell^1_\nu,\ell^1_\nu)}  \left\vert \left(g^{(j)}\right)'' \right\vert \left( \left\Vert \bu\right\Vert_{\ell^1_\nu(C^0)} + \vartheta_\epsilon \rstar\right),
\end{align*}
which, after removing the $\rstar \Vert v \Vert_{\X_\nu}$ factor, gives the $W_{\infty N}$ bound.

\subsubsection{Truncation in phase space}

Finally, for the tail part, we use once again Lemmas~\ref{lem:tailEstimate},~\ref{lem:norms} and~\ref{lem:Theta}, to get
\begin{align}
\label{eq:Z2infty}
\left\Vert \Pi_{\infty} A \left(DF \left( \bu + \rstar v \right) - DF \left( \bu \right) \right)h  \right\Vert_{\X^\infty_\nu} &= \left\Vert \Pi_{\infty} \left(DF \left( \bu + \rstar v \right) - DF \left( \bu \right) \right)h  \right\Vert_{\X^\infty_\nu} \nonumber\\
&\leq \rstar\left\Vert v\right\Vert_{\X_\nu} \vartheta_\epsilon^2  \sum_{j=0}^{2R-1}\chi^{(j)}_N \left\vert \left(g^{(j)}\right)'' \right\vert \left( \left\Vert \bu\right\Vert_{\ell^1_\nu(C^0)} + \vartheta_\epsilon \rstar\right),
\end{align}
which, after removing the $\rstar \Vert v\Vert_{\X_\nu}$ factor, gives the $W_{\infty}$ bound.

\section{Domain decomposition}
\label{sec:domaindecomposition}

In this section, we introduce a generalization of the setup introduced in Section~\ref{sec:functional_setup}, which is more efficient for handling longer integration times. Indeed, as one would expect, most of the estimates derived in Section~\ref{sec:bounds} get worse when the integration time $\tau$ increases. This happens in very explicit ways, as the $Y_{\infty N}$, $Z_{\infty N}$ and $W_{\infty N}$ estimates all contain a multiplicative factor $\tau$, but also in slightly more hidden but no less impactful manners, for instance in the $Z_{KN}$ and $Z_{\infty N}$ estimates, which depend on how well the functions $\left(g^{(j)}\right)'(\bu)$ are approximated by the time-independent $\bv^{(j)}$.

A natural option to compensate for these growing factors when $\tau$ increases would be to also increase the degree $K$ of Chebyshev interpolation used. Indeed, several of the above-mentioned bounds are proportional to the interpolation error constant $\sigma_{K,0}$, namely $Z_{\infty N}$ and $W_{\infty N}$, whereas $Y_{\infty N}$ behaves even more favorably when $K$ increases, as $C^l$ or even analytic interpolation error estimates can be used there (as opposed to the $C^1$ interpolation error estimate corresponding to $\sigma_{K,0}$). However, it should already be noted that increasing $K$ does not improve the factors related to the difference between $\left(g^{(j)}\right)'(\bu)$ and $\bv^{(j)}$ in the $Z$ estimates.

Another possibility is to split the time domain into multiple smaller subdomains, and to use several ``copies'' of the zero finding problem introduced previously, which are only weakly coupled via ``matching conditions'' at the boundary between successive subintervals. If we consider $M$ subintervals, assumed for the moment to have equal length to simplify the discussion, this essentially means replacing $\tau$ by $\frac{\tau}{M}$ in the estimates mentioned above, and several estimates then scale like $\frac{\tau}{M}\sigma_{K,0}$. Since $\sigma_{K,0}$ behaves roughly like $\frac{\ln(K)}{K}$ (see Theorem~\ref{thm:interp_error}), we already see that increasing $M$ is slightly more efficient than increasing $K$ for these estimates (see~\cite{BreLes18} for a more throughout discussion on this specific aspect). Besides, for the critical estimate $Z$, increasing $M$ instead of $K$ has another substantial benefit related to our set-up with the time independent $\L$. Indeed, we are now allowed to consider a different $\L$ on each (now smaller) subdomain.
In other words, we can approximate $\left(g^{(j)}\right)'(\bu)$ by $\bv^{(j)}$ that is piece-wise constant in time rather than constant over the whole time interval, resulting in a much smaller difference $\left(g^{(j)}\right)'(\bu)-\bv^{(j)}$, and therefore in a smaller $Z$ estimate. For the sake of simplifying the discussion, we completely ignored the impact of the matching conditions on the estimates here, but this will be discussed in detail in Section~\ref{sec:adaptedbounds}.

A key aspect that we wish to emphasize is that, thanks to these matching conditions, we still validate the entire solution at once. This means the domain decomposition setup can then be used to solve boundary value problems in time, for instance between local unstable and stable manifolds of objects of interest. This will be the subject of a future work.

An alternative approach for long time integration is time stepping: we also decompose the time domain, but then validate the solution on each subdomain successively, while rigorously propagating the errors/enclosures obtained at each step. While this direction is not the main focus of our work, the approach we developed can also handle time-stepping, which we describe in more detail in Appendix~\ref{app:timestepping}.

In this section, we describe the modifications required for the domain decomposition approach. One key aspect is that the approximate inverse $A$ should be modified judiciously. Once this is done, most of the computations presented in Section~\ref{sec:bounds} can be re-used, and we therefore mainly describe the new estimates that have to be incorporated.

\subsection{How to adapt the setup}
\label{sec:setup_DD}
 
We start again from~\eqref{eq:PDE_with_L}, for $t\in[0,2\tau]$, but instead of dealing with the entire time interval at once, we consider $M\in\NN_{\geq 2}$, and $\tau_1,\tau_2,\ldots,\tau_M>0$ such that $\sum_{m=1}^M \tau_m = \tau$. Splitting $t\in[0,2\tau]$ into $M$ subintervals (each of length $2\tau_m$), and then rescaling to $[-1,1]$ on each subinterval, the initial value problem \eqref{eq:semiflow} is then equivalent to 
\begin{align}	
	\label{eq:domaindecomp}
	\begin{cases}
		\dfrac{ d u^{(m)} }{ d t }(t) - \tau_m \L^{(m)} u^{(m)}(t)  = \tau_m \left((-1)^{R+1} \D^{2R} u^{(m)}(t) + \displaystyle\sum_{j=0}^{2R-1} \D^{j} g^{(j)} (u^{(m)})(t) -\L^{(m)} u^{(m)}(t) \right), \\[2ex]
		\hfill t \in [-1,1], \\[2ex]
		u^{(m)} \left( -1 \right) = u^{(m-1)}(1), 
	\end{cases}
\end{align}
for all $m=1,\ldots,M$, where $u^{(0)}(1) \bydef f$. A natural space in which to study this domain decomposition problem is $\X_\nu^M\bydef \left(\mathcal{X}_{\nu}\right)^M$, with the max-norm
\begin{align*}
\left\Vert u \right\Vert_{\X_\nu^M} \bydef \max_{1\leq m\leq M} \left\Vert u^{(m)} \right\Vert_{\X_\nu}.
\end{align*}
\begin{definition}
	\label{eq:zeroMap_domaindecomp}
	The zero finding map $F: \mathcal{X}_{\nu}^M \rightarrow \mathcal{X}_{\nu}^M$ for \eqref{eq:PDE} is defined by 
	\begin{align*}
		 F^{(m)} \left(u\right)\left(t\right) &\bydef
		e^{ \tau_m(t+1) \L^{(m)}}u^{(m-1)}(1) + \tau_m \int_{-1}^{t} e^{ \tau_m(t-s)\L^{(m)} }  \gamma^{(m)}(u^{(m)}(s))
		 \mbox{d} s - u^{(m)} \left( t \right),
	\end{align*}
for all $m=1,\ldots,M$,	where
	\begin{equation*}
		\gamma^{(m)}(u^{(m)}) = (-1)^{R+1} \D^{2R} u^{(m)} + \displaystyle\sum_{j=0}^{2R-1} \D^{j} g^{(j)} (u^{(m)}) -\L^{(m)} u^{(m)}.
	\end{equation*}	
Each $\L^{(m)}$ is defined, on its corresponding subinterval, as $\L$ was defined in Section~\ref{sec:defL}.
\end{definition}

One can now define a finite dimensional reduction of $F$ and a Newton-like operator $T$ in almost exactly 
the same way as in Section \ref{sec:Newtonlike}. For the sake of completeness, we present the essential details. 
To keep the coupling between the successive subdomains more easily tractable, we work with a single choice of $K$ and $N$ for all subdomains.

Before introducing the approximate inverse, let us also consider the linear operator $G: \left(\mathcal{X}_{\nu}\right)^M \rightarrow \left(\mathcal{X}_{\nu}\right)^M$ defined by 
\begin{equation*}
G^{(m)} \left(u\right)\left(t\right) \bydef  \left\{
\begin{aligned}
& - u^{(m)} \left( t \right) &\qquad m=1,\\
&e^{ (t+1)\tau_m\L^{(m)}}u^{(m-1)}(1) - u^{(m)} \left( t \right) &\qquad m=2,\ldots,M.
\end{aligned}
\right.
\end{equation*}
Notice that $G$ is invertible, and that
\begin{align*}
&\left( G^{-1}(u) \right)^{(m)}(t) =  \\
&\qquad\qquad \left\{
\begin{aligned}
& - u^{(m)} \left( t \right) &\qquad m=1,\\
& - u^{(m)}(t)-e^{(t+1)\tau_m\L^{(m)}}\sum_{l=1}^{m-1}\left(\prod_{j=l+1}^{m-1}e^{2\tau_j\L^{(j)}}\right)u^{(l)}(1) &\qquad m=2,\ldots,M.
\end{aligned}
\right.
\end{align*}
with the convention $\prod_{j=l+1}^{m-1}e^{2\tau_j\L^{(j)}} =I$ when $l=m-1$.
 
We extend the projections
	$\Pi_{KN}, \Pi_{\infty N}$ and $\Pi_{\infty}$ from the single domain 
	to the multiple domains setting in the natural way. This gives rise to the subspaces 
	\begin{align*}
		(\mathcal{X}^{KN}_{\nu})^{M} =  \Pi_{KN} \left( \mathcal{X}_{\nu}^{M} \right), \quad
		(\mathcal{X}^{\infty N}_{\nu})^{M} =  \Pi_{\infty N} \left( \mathcal{X}_{\nu}^{M} \right), \quad
		(\mathcal{X}^{\infty}_{\nu})^{M} =  \Pi_{\infty} \left( \mathcal{X}_{\nu}^{M} \right)
	\end{align*}
	and to the decomposition 
	\begin{align*}
		\mathcal{X}^{M}_{\nu} = (\mathcal{X}^{KN}_{\nu})^{M}  \oplus (\mathcal{X}^{\infty N}_{\nu})^{M} \oplus (\mathcal{X}^{\infty}_{\nu})^{M}. 
	\end{align*}
In order to introduce an appropriate approximate inverse, it will be convenient to use a block-decomposition of linear operators acting on $\X_\nu^M$, based on the above decomposition.

\begin{definition}[Approximation of $DF \left( \bu \right)$]
	Given $\bu$ in $\left(\X^{KN}_{\nu}\right)^M$, we introduce an approximate derivative $\widehat{DF}: \X^M_{\nu} \rightarrow \X^M_{\nu}$ of $F$ at $\bu$ defined by 
\begin{align*}
\widehat{DF} \bydef 
\renewcommand{\arraystretch}{3}
\left(
\begin{array}{c|c|c}
DF^M_{KN} \left( \bu \right) & 0 & 0 \\ \hline
\Pi_{\infty N} G \Pi_{KN} & -I^M_{\infty N} & 0 \\ \hline
0 & 0 & \Pi_{\infty} G \Pi_{\infty}
\end{array}
\right).
\end{align*}
\end{definition}

\begin{remark}
\label{rem:def_approx_inv}
Note that $\Pi_{KN} G \Pi_{\infty N} = 0$ and $\Pi_{\infty N} G \Pi_{\infty N} = -I^M_{\infty N}$, so that this approximate derivative $\widehat{DF}$ is simply obtained by always keeping the ``important'' part of $DF$, namely $G$, and by neglecting the others terms except on the finite dimensional subspace $\left(\X^{KN}_{\nu}\right)^M$. 

It is tempting be somewhat lazy, go one step further and also neglect the $\Pi_{\infty N} G \Pi_{KN}$ term, so that $\widehat{DF}$ and then its inverse $A$ become block-diagonal, which makes subsequent estimates easier. However, in practice we want the approximate derivative, and especially the approximate inverse of the derivative, to become more and more accurate when $N$ and $K$ increase, so that, at least in principle, we always get a contraction for $T$ by taking the finite dimensional projection large enough. If the $\Pi_{\infty N} G \Pi_{KN}$ term is not included, the consequence is that quantities like
\begin{equation*}
\left\Vert \left(I-P_K\right) \left(t\mapsto e^{(t+1)\tau_m\lambda_n^{(m)}} \right) \right\Vert_{C^0}
\end{equation*}
for all $\vert n\vert \leq N$  would have to be estimated in the new $Z^1_{\infty N}$ bound, and would have to be small enough for us to prove that $T$ is a contraction. Because these exponentials get stiffer and stiffer when $N$ increases, this would introduce a very undesirable dependency between $K$ and $N$, somewhat reminiscent of the CFL condition, where $K$ (the time discretization parameter) has to be large enough with respect to $N$ (the space discretization parameter).

Including the $\Pi_{\infty N} G \Pi_{KN}$ term in $\widehat{DF}$, and adapting $A$ accordingly, allows us to alleviate this limitation.
\end{remark}

\begin{definition}[Approximate inverse of $DF \left( \bu \right)$]
\label{def:A_n}
Given $\bu$ in $\left(\X^{KN}_{\nu}\right)^M$, we introduce an approximate inverse $A: \X^M_{\nu} \rightarrow \X^M_{\nu}$  of $DF \left( \bu \right)$, defined by 
\begin{align*}
A \bydef 
\renewcommand{\arraystretch}{3}
\left(
\begin{array}{c|c|c}
A_{KN} & 0 & 0 \\ \hline
\Pi_{\infty N} G \Pi_{KN} A_{KN} & -I^M_{\infty N} & 0 \\ \hline
0 & 0 & \Pi_{\infty} G^{-1} \Pi_{\infty}
\end{array}
\right),
\end{align*}
where $A_{KN}$ is a numerically computed approximate inverse of $DF^M_{KN}(\bu)$.
\end{definition}

Up to the fact that $A_{KN}$ is not the exact inverse of $DF^M_{KN}(\bu)$, $A$ is the inverse of $\widehat{DF}$. 
We can now again consider a Newton-like operator $T = I-AF$, this time mapping the product space $\X^M_\nu$ into itself, and try to validate a posteriori a numerically computed approximate solution $\bu$ by using Theorem~\ref{thm:NewtonKantorovich}.

On each copy of $\X_\nu$ in $\X_\nu^M$, we use the norm defined in~\eqref{eq:normXnu}, possibly with different weights $\epsilon_{\infty N}$ and $\epsilon_{\infty}$, which are thus denoted by $\epsilon_{\infty N}^{(m)}$ and $\epsilon_{\infty}^{(m)}$. For any $u\in\X^M_\nu$ and $r=(r^{(1)},\ldots,r^{(m)}) \in \RR^M_{>0}$, we define 
\begin{align*}
B^M_{r,\epsilon}(u) = \prod_{m=1}^M B_{r^{(m)},\epsilon^{(m)}}\left(u^{(m)}\right),
\end{align*}
which will play the role of $\bbox(u,r)$ when applying Theorem~\ref{thm:NewtonKantorovich}.

\subsection{Estimates on $G$ and $G^{-1}$}

We collect here basic estimates on $G$ and $G^{-1}$ that will be used several times in the next subsection, where we describe the extra terms appearing in the bounds $Y$, $Z$ and $W$ when domain decomposition is used.

\begin{lemma}
\label{lem:Pi_inftyNG}
Let $\bw \in\left(\X_\nu^{KN}\right)^M$. Then,
\begin{align*}
\left\Vert \Pi_{\infty N} G^{(1)} \left(\bw\right) \right\Vert_{\ell^1_\nu(C^0)} = 0
\end{align*}
and, for $m=2,\ldots,M$,
\begin{align*}
\left\Vert \Pi_{\infty N} G^{(m)} \left(\bw\right) \right\Vert_{\ell^1_\nu(C^0)} \leq  \left\Vert \vert Q\vert \left\Vert \left(I-P_K\right) \left(t\mapsto e^{(t+1)\tau_m\Lambda_N^{(m)}} \right) \right\Vert_{C^0} \vert Q^{-1}\vert\ \left\vert  \bw^{(m-1)}(1) \right\vert \right\Vert_{\ell^1_{\nu}} .
\end{align*}
\end{lemma}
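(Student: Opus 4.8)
The plan is to unwind the definitions of $G$ and of the relevant norms, and then apply the Chebyshev interpolation operator $P_K$ componentwise together with Lemma~\ref{lem:Upsilon}.

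First I would treat the $m=1$ case: by definition $G^{(1)}(\bw)(t) = -\bw^{(1)}(t)$, and since $\bw^{(1)}$ lies in $\X_\nu^{KN}$, each of its Fourier components is a polynomial of degree at most $K$, hence is fixed by $P_K$. Therefore $\Pi_{\infty N} G^{(1)}(\bw) = (\Pi_N - \Pi_{KN})\bigl(-\bw^{(1)}\bigr) = -(I-P_K)\Pi_N \bw^{(1)} = 0$, which gives the first identity.

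Next, for $m\geq 2$, by definition $G^{(m)}(\bw)(t) = e^{(t+1)\tau_m\L^{(m)}}\bw^{(m-1)}(1) - \bw^{(m)}(t)$. As above the second summand is killed by $\Pi_{\infty N}$, so
\[
\Pi_{\infty N} G^{(m)}(\bw) = (I-P_K)\Bigl(t\mapsto e^{(t+1)\tau_m\L^{(m)}}\bw^{(m-1)}(1)\Bigr)
\]
(after also noting $\Pi_N$ acts trivially here since $\bw^{(m-1)}(1)\in\Pi_N\ell^1_\nu$ and $\L^{(m)}$ leaves $\X_\nu^N$ invariant, so we may work with $\L_N^{(m)}$). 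Writing $\L_N^{(m)} = Q\Lambda_N^{(m)} Q^{-1}$ as in Section~\ref{sec:defL}, one factors
\[
e^{(t+1)\tau_m\L_N^{(m)}}\bw^{(m-1)}(1) = Q\, e^{(t+1)\tau_m\Lambda_N^{(m)}}\, Q^{-1}\bw^{(m-1)}(1),
\]
and since $Q$ and $Q^{-1}$ are constant matrices while $P_K$ acts componentwise in time, $(I-P_K)$ commutes past them:
\[
(I-P_K)\Bigl(t\mapsto e^{(t+1)\tau_m\L_N^{(m)}}\bw^{(m-1)}(1)\Bigr) = Q\,(I-P_K)\Bigl(t\mapsto e^{(t+1)\tau_m\Lambda_N^{(m)}}\Bigr)\, Q^{-1}\bw^{(m-1)}(1).
\]
Taking absolute values componentwise and then the $\ell^1_\nu(C^0)$ norm, the diagonal matrix $(I-P_K)(t\mapsto e^{(t+1)\tau_m\Lambda_N^{(m)}})$ contributes its $C^0$-norm entrywise, and pulling the $C^0$-norm through $Q$, $Q^{-1}$ and $\bw^{(m-1)}(1)$ via the triangle inequality (and the fact that $\bigl\|\,\cdot\,\bigr\|_{\ell^1_\nu(C^0)}$ of a product of a constant matrix against a $\X_\nu^{KN}$ element is bounded by the $\ell^1_\nu$ operator norm times the $\ell^1_\nu$ norm of the values) yields exactly the claimed bound. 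The main (minor) obstacle is bookkeeping the componentwise absolute value estimate through the change of basis by $Q$ — this is where one uses $\vert Q\vert$, $\vert Q^{-1}\vert$ and evaluates at $t=1$ inside the $C^0$-norm — but nothing here is deep; it is the same mechanism already used in Lemma~\ref{lem:interpolation_error_N} and Proposition~\ref{prop:Y_infN}.
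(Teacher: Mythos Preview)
Your proof is correct and follows essentially the same route as the paper: identify that $\Pi_{\infty N}$ kills the polynomial part $-\bw^{(m)}$, factor the remaining exponential through $Q\Lambda_N^{(m)}Q^{-1}$, commute the constant matrices past $(I-P_K)$, and take componentwise absolute values before applying the $\ell^1_\nu$ norm. The reference to Lemma~\ref{lem:Upsilon} in your opening plan is a red herring (it is never actually invoked, nor needed), and the parenthetical about ``the $\ell^1_\nu$ operator norm times the $\ell^1_\nu$ norm of the values'' slightly overstates what is used---the bound is kept componentwise throughout, exactly as in Proposition~\ref{prop:Y_infN}---but these are cosmetic points and do not affect correctness.
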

\begin{proof}
By definition of $G$, we have
\begin{equation*}
\Pi_{\infty N} G^{(m)} \left(\bw\right) = \left\{
\begin{aligned}
& 0 &\qquad m=1,\\
&\Pi_{\infty N} \left(t\mapsto e^{ (t+1)\tau_m\L^{(m)}}\bw^{(m-1)}(1) \right) &\qquad m=2,\ldots,M.
\end{aligned}
\right.
\end{equation*}
For all $m=2,\ldots,M$, we then estimate
\begin{align*}
&\left\Vert \left\Vert \left(I-P_K\right) \left(t\mapsto e^{(t+1)\tau_m\L_N^{(m)}}\bw^{(m-1)}(1) \right) \right\Vert_{C^0} \right\Vert_{\ell^1_{\nu}}  \\
&\qquad \leq \left\Vert \left\Vert \left(I-P_K\right) \left(t\mapsto e^{(t+1)\tau_m\L_N^{(m)}} \right) \right\Vert_{C^0}  \left\vert  \bw^{(m-1)}(1) \right\vert \right\Vert_{\ell^1_{\nu}} \\
&\qquad \leq \left\Vert \vert Q\vert \left\Vert \left(I-P_K\right) \left(t\mapsto e^{(t+1)\tau_m\Lambda_N^{(m)}} \right) \right\Vert_{C^0} \vert Q^{-1}\vert\ \left\vert  \bw^{(m-1)}(1) \right\vert \right\Vert_{\ell^1_{\nu}}.
\end{align*}
\end{proof}

While these interpolation errors for stiff exponentials were exactly the terms we were worried about when we discussed the definition of $\widehat{DF}$ and $A$ in Remark~\ref{rem:def_approx_inv}, they are less worrisome here because they get multiplied by $\left\vert  \bw^{(m-1)}(1) \right\vert$, which are not specified here but which will always be small when we apply Lemma~\ref{lem:Pi_inftyNG} in Section~\ref{sec:adaptedbounds} repeatedly.

\begin{remark}
\label{rem:Pi_inftyNG}
In order to make this estimate computable, we have to get estimates on
\begin{equation*}
\left\Vert \left(I-P_K\right) \left(t\mapsto e^{(t+1)\tau_m\lambda_n^{(m)}} \right) \right\Vert_{C^0}
\end{equation*}
for all $\vert n\vert \leq N$. This is accomplished using Lemma~\ref{lem:interp_exp} together with Remark~\ref{rem:interp_error}. Similarly to what we did in Section~\ref{sec:Ybounds_inftyN}, we estimate this error by splitting it into an almost exact part, using an interpolation polynomial of high degree $K_0$, and an interpolation error estimate:
\begin{align*}
&\left\Vert \left(I-P_K\right) \left(t\mapsto e^{(t+1)\tau_m\lambda_n^{(m)}} \right) \right\Vert_{C^0} \leq \\
&\qquad\qquad \left\Vert \left(P_{K_0}-P_K\right) \left(t\mapsto e^{(t+1)\tau_m\lambda_n^{(m)}} \right) \right\Vert_{C^0} + \left\Vert \left(I-P_{K_0}\right) \left(t\mapsto e^{(t+1)\tau_m\lambda_n^{(m)}} \right) \right\Vert_{C^0},
\end{align*}
where the first term can be easily estimated via~\eqref{eq:C0_norm}, and for the second one we use again Lemma~\ref{lem:interp_exp}, 
for some well chosen $\rho>1$ (in practice a different $\rho$ is taken for each $m$ and $n$).
\end{remark}

\begin{lemma}
\label{lem:GinvPi_infty}
Let $u\in\X_\nu^M$. For $m=1,\ldots,M$ and $\vert n\vert > N$,
\begin{align*}
\left\Vert \left(G^{-1}(u)\right)^{(m)}_n \right\Vert_{C^0} \leq  \left\Vert u_n^{(m)} \right\Vert_{C^0} + \sum_{l=1}^{m-1}   \exp\left(2\tau_m\Re\left(\lambda^{(m)}_n\right)^+ + 2\sum_{j=l+1}^{m-1}\tau_j\Re\left(\lambda_n^{(j)}\right)\right)\left\vert u^{(l)}_n(1) \right\vert,
\end{align*}
where we again use the convention that empty sums are equal to $0$. Moreover,
\begin{align*}
\left\Vert \Pi_\infty \left(G^{-1}  (u)\right)^{(m)} \right\Vert_{\ell^1_\nu(C^0)} \leq \left\Vert \Pi_\infty u^{(m)} \right\Vert_{\ell^1_\nu(C^0)} + \sum_{l=1}^{m-1} e^{\mu_{m,l}} \left\Vert \Pi_\infty u^{(l)} \right\Vert_{\ell^1_\nu(C^0)},
\end{align*}
where
\begin{equation*}
\mu_{m,l} \bydef  \sup_{n>N} \left(2\tau_m \Re\left(\lambda_n^{(m)}\right)^+ + 2\sum_{j=l+1}^{m-1}\tau_j\Re\left(\lambda_n^{(j)}\right)\right).
\end{equation*}
\end{lemma}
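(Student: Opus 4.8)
The plan is to work directly from the explicit formula for $G^{-1}$ given just before the lemma statement, component by component in the Fourier index $n$ for $\vert n\vert > N$, using the fact that in the tail the operators $\L^{(m)}$ act diagonally with eigenvalues $\lambda_n^{(m)}$ (see~\eqref{eq:eigenvalues}, with the coefficients $\bv^{(j)}$ now depending on $m$). Fix $m$ and $\vert n\vert > N$. From the displayed expression for $\left(G^{-1}(u)\right)^{(m)}(t)$, the $n$-th Fourier mode reads
\begin{align*}
\left(G^{-1}(u)\right)^{(m)}_n(t) = -u^{(m)}_n(t) - e^{(t+1)\tau_m\lambda_n^{(m)}}\sum_{l=1}^{m-1}\exp\Bigl(2\textstyle\sum_{j=l+1}^{m-1}\tau_j\lambda_n^{(j)}\Bigr) u^{(l)}_n(1),
\end{align*}
since the exponential of a diagonal operator is diagonal. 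First I would take the $C^0$ norm in $t\in[-1,1]$, use the triangle inequality, and bound each exponential factor in modulus: $\bigl\vert e^{(t+1)\tau_m\lambda_n^{(m)}}\bigr\vert = e^{(t+1)\tau_m\Re(\lambda_n^{(m)})} \le e^{2\tau_m\Re(\lambda_n^{(m)})^+}$ for $t\in[-1,1]$ (the exponent $(t+1)\tau_m\Re(\lambda_n^{(m)})$ is maximized at $t=1$ if $\Re(\lambda_n^{(m)})\ge 0$ and at $t=-1$ otherwise, giving $0$ in that case, hence the $(\cdot)^+$), and $\bigl\vert \exp(2\sum_{j=l+1}^{m-1}\tau_j\lambda_n^{(j)})\bigr\vert = \exp(2\sum_{j=l+1}^{m-1}\tau_j\Re(\lambda_n^{(j)}))$. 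Combining these yields exactly the claimed pointwise bound
\[
\left\Vert \left(G^{-1}(u)\right)^{(m)}_n \right\Vert_{C^0} \leq \left\Vert u_n^{(m)} \right\Vert_{C^0} + \sum_{l=1}^{m-1} \exp\Bigl(2\tau_m\Re(\lambda^{(m)}_n)^+ + 2\sum_{j=l+1}^{m-1}\tau_j\Re(\lambda_n^{(j)})\Bigr)\left\vert u^{(l)}_n(1) \right\vert.
\]

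For the second, $\ell^1_\nu(C^0)$, estimate, I would multiply the pointwise bound by $\nu^{\vert n\vert}$ and sum over $\vert n\vert > N$. The first term sums to $\left\Vert \Pi_\infty u^{(m)} \right\Vert_{\ell^1_\nu(C^0)}$ directly. For each term in the finite sum over $l$, I would bound the $n$-dependent exponential by its supremum over $n>N$, which (using $\Re(\lambda_n^{(j)}) = \Re(\lambda_{-n}^{(j)})$, so that it suffices to take the sup over $n>N$ rather than $\vert n\vert>N$) is precisely $e^{\mu_{m,l}}$ with $\mu_{m,l}$ as defined; pulling this constant out of the sum over $n$ leaves $\sum_{\vert n\vert>N}\vert u^{(l)}_n(1)\vert\nu^{\vert n\vert} \le \left\Vert \Pi_\infty u^{(l)} \right\Vert_{\ell^1_\nu(C^0)}$, since $\vert u^{(l)}_n(1)\vert \le \left\Vert u^{(l)}_n\right\Vert_{C^0}$. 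Summing over $l$ gives the stated bound. The convention that empty sums vanish handles the case $m=1$ (and the inner sum when $l=m-1$) with no special treatment.

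There is essentially no serious obstacle here: the lemma is a routine unpacking of the closed-form expression for $G^{-1}$ together with elementary modulus estimates on scalar exponentials and the Banach-algebra-style norm bookkeeping already standard in the paper. The only point requiring a little care is the appearance of the positive part $\Re(\lambda_n^{(m)})^+$ in the outermost exponential — it comes from maximizing $(t+1)\tau_m\Re(\lambda_n^{(m)})$ over $t\in[-1,1]$, and note it is \emph{not} present on the inner factors $\exp(2\tau_j\Re(\lambda_n^{(j)}))$ because there the full factor $2\tau_j$ (corresponding to evaluation at $t=1$, i.e. matching at the right endpoint of subinterval $j$) always appears, regardless of the sign of $\Re(\lambda_n^{(j)})$. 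Making the sup over $n>N$ rather than $\vert n\vert>N$ legitimate also uses the symmetry $\Re(\lambda_n^{(j)})=\Re(\lambda_{-n}^{(j)})$, which follows from~\eqref{eq:eigenvalues} since $-n^{2R}$ is even in $n$ and the odd-order terms $(in)^j(\bv^{(j)})_0$ with $j$ odd are purely imaginary.
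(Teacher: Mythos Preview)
Your proof is correct and follows exactly the approach the paper indicates: the paper's own proof is the single sentence ``follows directly from the definition of $G^{-1}$ and the triangle inequality,'' and your proposal simply makes that explicit by reading off the diagonal action of $\L^{(m)}$ on the tail, bounding $\vert e^{(t+1)\tau_m\lambda_n^{(m)}}\vert$ by $e^{2\tau_m\Re(\lambda_n^{(m)})^+}$, and then summing with the $\ell^1_\nu$ weights. The observation that $\Re(\lambda_n^{(j)})=\Re(\lambda_{-n}^{(j)})$ justifying the supremum over $n>N$ is exactly the same symmetry the paper already invokes in the proof of Lemma~\ref{lem:tailEstimate}.
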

\begin{proof}
The proof of Lemma~\ref{lem:GinvPi_infty} follows directly from the definition of $G^{-1}$ and the triangle inequality.
\end{proof}

\subsection{How to adapt the bounds}
\label{sec:adaptedbounds}

In this subsection, we derive computable bounds $Y$, $Z$ and $W$ satisfying assumptions~\eqref{e:def_Y}-\eqref{e:def_W} of Theorem~\ref{thm:NewtonKantorovich}, for the operator $T$, the space $X=\X_\nu^M$ and an approximate solution $\bar u$ in $\left(\X_\nu^{KN}\right)^M$. The dependencies in $F$ on the various subdomains are mostly uncoupled, except for the matching conditions at the boundaries, which means many of the estimations derived in Section~\ref{sec:bounds} can be re-used. Throughout this section we will comment sparsely on the bookkeeping involved to obtain the ``component'' bounds $Y^{(m)}$, $Z_i^{(m)}$ and $W^{(m)}_{ij}$, which are needed for using Theorem~\ref{thm:NewtonKantorovich} with $M>1$.

As was done in Section~\ref{sec:bounds}, each bound $Y^{(m)}$, $Z_i^{(m)}$ and $W^{(m)}_{ij}$ will be split into three parts corresponding to the decomposition~\eqref{eq:Xnu_decompo} of $\X_\nu$, i.e.
\begin{align*}
Y^{(m)} := Y_{KN}^{(m)} + \left(\epsilon^{(m)}_{\infty N}\right)^{-1} Y^{(m)}_{\infty N} + \left(\epsilon^{(m)}_{\infty }\right)^{-1} Y^{(m)}_{\infty}, \quad \text{for all } 1\leq m\leq M,
\end{align*}
and similarly for $Z^{(m)}_i$ and $W^{(m)}_{ij}$.

\begin{remark}
Similarly to what we did in Section~\ref{sec:bounds}, it will be convenient to consider arbitrary $h$ and $v$ in $B^M_{1,\epsilon}(0)$ in order to derive the bounds $Z^{(m)}_i$ and $W^{(m)}_{ij}$. That is we are going to look for $Z^{(m)}_i$ such that
\begin{align*}
\left\Vert\pi^{(m)} DT(\bu) h\right\Vert_{\X_\nu} \leq \sum_{i=1}^M Z^{(m)}_i \left\Vert h^{(i)} \right\Vert_{\X_\nu},
\end{align*}
and for $W^{(m)}_{ij}$ such that, writing an arbitrary $u\in B^M_{\rstar,\epsilon}(\bu)$ as $\bu+\rstar v$,
\begin{align*}
\left\Vert\pi^{(m)} \left(DT(\bu+\rstar v) - DT(\bu)\right) h\right\Vert_{\X_\nu} \leq \rstar\sum_{i=1}^M\sum_{j=1}^M W^{(m)}_{ij} \left\Vert h^{(i)} \right\Vert_{\X_\nu}  \left\Vert v^{(j)} \right\Vert_{\X_\nu}.
\end{align*}
\end{remark}

\subsubsection{Modifications for $Y_{KN}$}

Each entry of $F_{KN}(\bu)$ can still be computed (or more precisely enclosed), and so the generalization is straightforward:  we simply consider 
\begin{align*}
	Y^{(m)}_{KN} \bydef \left \Vert  \pi^{(m)}  \left(A_{KN} F_{KN} (\bu) \right)\right \Vert_{\mathcal{X}^{KN}_{\nu}},\quad m=1,\ldots,M,
\end{align*}
with the $F$ and $A_{KN}$ of Section~\ref{sec:setup_DD}.

\subsubsection{Modifications for $Y_{\infty N}$}
 
On each subdomain $1\leq m\leq M$, we have to estimate 
$\left\Vert \Pi_{\infty N} F^{(m)}(\bu) \right\Vert_{\X_\nu}$,
for which we can reproduce verbatim the analysis of Section~\ref{sec:Ybounds_inftyN}. However, we also get an additional term
\begin{equation*}
\Pi_{\infty N} G \Pi_{KN} A_{KN}F_{KN}(\bu)
\end{equation*}
to estimate. We therefore consider $\bw = A_{KN}F_{KN}(\bu) \in \left(\X_\nu^{KN}\right)^M$, and use Lemma~\ref{lem:Pi_inftyNG} together with Remark~\ref{rem:Pi_inftyNG} to get an upper bound for this quantity.

\subsubsection{Modifications for $Y_{\infty}$}

We have to bound
\begin{equation*}
\pi^{(m)}\Pi_\infty \left(T(\bu)-\bu\right) = \pi^{(m)}G^{-1} \left(\Pi_\infty F(\bu)\right),
\end{equation*}
and to that aim we use the first part of Lemma~\ref{lem:GinvPi_infty}, together with
\begin{align}
\label{eq:Fbu1}
\left\vert F^{(m)}_n\left(\bu\right) (1)\right\vert = \left\vert e^{2\tau_m\lambda_n^{(m)}}  \bu^{(m-1)}_n(1)  + \tau_m  \int_{-1}^1 e^{(1-s)\tau_m\lambda_n^{(m)}} \gamma_n^{(m)}(\bu(s)) \mbox{d} s \right\vert,
\end{align} 
and
\begin{align}
\label{eq:Fbu}
\left\Vert F^{(m)}_n\left(\bu\right) \right\Vert_{C^0} &\leq e^{2\tau_m\Re\left(\lambda_n^{(m)}\right)^+} \left\vert \bu^{(m-1)}_n(1) \right\vert  +  \left\Vert \gamma_{n}^{(m)} \left( \bu \right)\right\Vert_{C^0} \frac{e^{2\tau_m\Re(\lambda_n^{(m)})}-1}{\Re(\lambda_n^{(m)})},
\end{align}
for each $m=1,\ldots,M$ and $n>N$. Notice that, except maybe for $m=1$, $\bu^{(m-1)}_n=0$ because we only consider $\vert n\vert >N$, and that there are only finitely many $n$ for which $\gamma_{n}^{(m)} \left( \bu \right)$, and hence $F^{(m)}_n\left(\bu\right)$ , are non-zero. Therefore we can compute the associated $\ell^1_\nu$ norm, which gives us the bounds $Y^{(m)}_{\infty}$ for $1\leq m\leq M$.

\subsubsection{Modifications for $Z_{KN}$}
\label{sec:modZKN}

Up to using the new definition of $A$ and $\widehat{DF}$ from Section~\ref{sec:setup_DD}, there is no significant change to the $\Pi_{KN} (I - A \widehat{DF})$ part of that estimate, which remains a finite computation.
In view of~\eqref{e:def_Z} we just need to extract the different components $(Z_{KN})^{(m)}_{i}$, which is essentially a bookkeeping task. Let $A_{KN}^{(m,m')}$ be the block of the matrix $A_{KN}$ linking the $m'$-th to the $m$-th domain, and similarly for $DF^M_{KN}(\bar{u})^{(m,m')}$, then 
we need to compute 
$\| I_{KN} -\sum_{m'=1}^M A_{KN}^{(m,m')} DF^M_{KN}(\bar{u})^{(m',i)} \|_{B(\X_{\nu}^{KN},\X_{\nu}^{KN})}$ as the first part of the bound $(Z_{KN})^{(m)}_{i}$. 

Regarding the $\Pi_{KN} \left(DF(\bu)-\widehat{DF}\right) h$ part, we can reproduce verbatim the analysis of Section~\ref{sec:Zbounds_KN} on each subdomain separately. There are no new terms coming from the matching conditions, since 
$ \Pi_{\infty N} h^{(m)}(1) = 0$
for $1\leq m\leq M$.
When determining the contribution of this term to the bound $(Z_{KN})^{(m)}_{i}$,
the only coupling between the domains is through left-multiplication by $A_{KN}^{M}$.

\subsubsection{Modifications for $Z_{\infty N}$}

Regarding the $\Pi_{\infty N}\left(I - A \widehat{DF}\right)$ part of that estimate, we again get an extra term of the form $\Pi_{\infty N} G \left(\bw\right)$, this time with
\begin{align*}
\bw = \left(I^M_{KN} - A_{KN}^M DF^M_{KN}(\bu)\right)h_{KN},
\end{align*}
for $h \in B^M_{1,\epsilon}(0)$. We can then easily estimate each $\vert \bw^{(m)}(1)\vert$, 
and subsequently use Lemma~\ref{lem:Pi_inftyNG}.
The decomposition into components $(Z_{\infty N})^{(m)}_{i}$ is a bookkeeping exercise as already outlined in Section~\ref{sec:modZKN}.

Regarding the $\Pi_{\infty N} A \left(DF \left( \bu \right) - \widehat{DF} \right)$ part of that estimate,
we also pick up another term of the form $\Pi_{\infty N} G \left(\bw\right)$, this time with
\begin{equation*}
\bw = A_{KN}\Pi_{KN} \left(DF(\bu)-\widehat{DF}\right) h,
\end{equation*}
for $h \in B^M_{1,\epsilon}(0)$. The $\Pi_{KN} \left(DF(\bu)-\widehat{DF}\right) h$ part was already estimated in~\eqref{eq:Z1_KN}, since all the extra terms related to the coupling appearing in $DF(\bu)$ are also included in $\widehat{DF}$ and therefore cancel out. We then simply have to multiply this by $\vert A_{KN}\vert$, evaluate the value at 1, use Lemma~\ref{lem:Pi_inftyNG}, and do diligent bookkeeping to 
decompose the contributions to $(Z_{\infty N})^{(m)}_{i}$.

\subsubsection{Modifications for $Z_\infty$}
We have to estimate, for all $m=1,\ldots,M$,
\begin{equation*}
\left \Vert \pi^{(m)}\Pi_{\infty} A \left( DF \left( \bu \right) - \widehat{DF} \right) h \right \Vert_{\X^\infty_{\nu}} = \left \Vert \pi^{(m)}G^{-1} \left(\Pi_{\infty} w\right) \right\Vert_{\X^\infty_{\nu}}
\end{equation*}
where
\begin{align*}
w = DF(\bu)(h) - G(h),
\end{align*}
for $h \in B^M_{1,\epsilon}(0)$. For all $\vert n\vert >N$, and $1\leq m\leq M$, with the notation $\tilde g^{(m,j)}$ inherited from Section~\ref{sec:Zinf},
\begin{align*}
w^{(m)}_n(t) & = \tau_m  \int_{-1}^t e^{\tau_m(t-s)\lambda^{(m)}_n} \left(\sum_{j=0}^{2R-1}(in)^j \left(\tilde g^{(m,j)}(s)\ast h^{(m)}(s)\right)_n \right) \mbox{d} s,
\end{align*}
hence
\begin{align*}
\left\Vert \Pi_\infty w^{(m)} \right\Vert_{\ell^1_\nu(C^0)} \leq  \vartheta_\epsilon  \left\Vert h^{(m)} \right\Vert_{\X_\nu} \sum_{j=0}^{2R-1}\chi_N^{(m,j)} \left\Vert \tilde g^{(m,j)} \right\Vert_{\ell^1_\nu(C^0)},
\qquad\text{for } m=1,\ldots,M,
\end{align*}
which can be combined with the second part of Lemma~\ref{lem:GinvPi_infty} to obtain the  $(Z_{\infty})^{(m)}_{i}$ estimates.

\subsubsection{Modifications for $W_{KN}$}

As was the case for the $Y$ and the $Z$ bounds, there is nothing to change in the finite part of the $W$ bound, we can just re-use, component-wise, the bounds from Section~\ref{sec:Wbounds_KN}.

\subsubsection{Modifications for $W_{\infty N}$}

We get a final extra term of the form $\Pi_{\infty N} G \left(\bw\right)$, this time with
\begin{equation*}
\bw = A_{KN}\Pi_{KN} \left(DF(\bu+\rstar v)-DF(\bu)\right) h,
\end{equation*}
for $h,v \in B^M_{1,\epsilon}(0)$. The $\Pi_{KN} \left(DF(\bu+\rstar v)-DF(\bu)\right) h$ part was already estimated in~\eqref{eq:Z2_KN}. 
Starting back from there, and also generalizing the notation $\Xi^{(j)}$ from~\eqref{e:Xi} to multiple domains:
\[
\Xi^{(m,j)} \bydef
\begin{pmatrix}
\vert Q^{(m)}\vert\, D^{(m)}_N(t_0) \,\vert (Q^{(m)})^{-1}\vert\  \vert \D^j\vert \\ 
\vdots \\ 
\vert Q^{(m)}\vert\, D^{(m)}_N(t_K) \,\vert (Q^{(m)})^{-1}\vert\  \vert \D^j\vert 
\end{pmatrix},
\]
we get, for all $m=1,\ldots,M$,
\begin{align*}
& \left\vert \Pi_{KN} \left(DF^{(m)}(\bu+\rstar v)-DF^{(m)}(\bu)\right) h \right\vert \leq \\
&\qquad\qquad  \rstar\sum_{j=0}^{2R-1} \Xi^{(m,j)} \,
	  \Pi_N \left( \left \vert \left( g^{(j)} \right)'' \right \vert \left( \Vert \bar u^{(m)} \Vert_{C^{0}} + \rstar \Vert v^{(m)} \Vert_{C^{0}} \right) \ast \Vert v^{(m)}\Vert_{C^0} \ast \Vert h^{(m)}\Vert_{C^0}\right).
\end{align*}
We then define the ``evaluation at $t=1$'' operator $E_1 : \X^{KN}_{\nu} \to \CC^{2N+1}$ by $(E_1 u)_n = u_{n0}+2\sum_{k=1}^K u_{nk}$, and use Lemmas~\ref{lem:norms}, \ref{lem:Upsilon} and~\ref{lem:Theta} to obtain the $(W_{\infty N})^{(m)}_{ij}$ estimates.
\begin{align}\label{eq:verymessy}
  &|\bw^{(m)}(1)|  \leq  \nonumber\\
  &\quad  \rstar \vartheta_\epsilon^2 \sum_{m'=1}^{M} \Vert v^{(m')}\Vert_{\X_\nu} \Vert h^{(m')}\Vert_{\X_\nu} \sum_{j=0}^{2R-1} \Upsilon \left(
   | E_1 A^{(m,m')}_{KN} | \, \Xi^{(m',j)} \right) \,
   \left\vert \left(g^{(j)}\right)'' \right\vert \left( \left\Vert \bu^{(m')} \right\Vert_{\ell^1_\nu(C^0)} + \vartheta_\epsilon r^*\right) ,
\end{align}
where $A^{(m,m')}_{KN}$ is the block of the matrix $A_{KN}$ linking the $m'$-th to the $m$-th domain. Finally, we apply Lemma~\ref{lem:Pi_inftyNG} to obtain the $(W_{\infty N})^{(m)}_{ij}$ estimates. 

\subsubsection{Modifications for $W_{\infty}$} 

We have to estimate, for all $m=1,\ldots,M$,
\begin{equation*}
\left \Vert \pi^{(m)}\Pi_{\infty} A \left( DF \left( \bu +\rstar v \right) - DF \left( \bu\right) \right) h \right \Vert_{\X^\infty_{\nu}} = \left \Vert \pi^{(m)}G^{-1} \left( \Pi_{\infty} z \right) \right\Vert_{\X^\infty_{\nu}},
\end{equation*}
where
\begin{equation*}
z=\left( DF \left( \bu +\rstar v \right) - DF \left( \bu\right) \right) h.
\end{equation*}
Such a term was already estimated in~\eqref{eq:Z2infty}, and we obtain
\begin{equation*}
\left\Vert \Pi_{\infty} z^{(m)} \right\Vert_{\ell^1_\nu(C^0} \leq \vartheta_\epsilon^2 \rstar \left\Vert v^{(m)}\right\Vert_{\X_\nu}\left\Vert h^{(m)}\right\Vert_{\X_\nu} \sum_{j=0}^{2R-1}\chi^{(m,j)}_N  \left\vert \left(g^{(j)}\right)'' \right\vert \left( \left\Vert \bu^{(m)} \right\Vert_{\ell^1_\nu(C^0)} + \vartheta_\epsilon r^*\right),
\end{equation*}
for all $1\leq m\leq M$, which can be combined with the second part of Lemma~\ref{lem:GinvPi_infty} to obtain the $(W_{\infty})^{(m)}_{ij}$ estimates.

\section{Applications}
\label{sec:examples}

In this section, we present some applications of the validation procedure introduced in this paper, and discuss the obtained results. First, using the domain decomposition approach, we showcase the broad applicability of our method in Section~\ref{sec:ex_domain_decomposition}, by using it to rigorously integrate several PDEs of the form~\eqref{eq:PDE} having different properties and different dynamics. In Section~\ref{sec:ex_L}, we highlight the influence of one of the most significant aspects of this work, namely the solution-adapted choice of the operator $\L$ made in Section~\ref{sec:defL}, by reproducing some of the proofs from Section~\ref{sec:ex_domain_decomposition} but now with a trivial choice of $\L$, and by comparing the results. In Section~\ref{sec:ex_time_stepping}, we discuss the merits of time stepping as an alternative to domain decomposition.

\subsection{Different examples of equations}

\label{sec:ex_domain_decomposition}

We start with what is arguably the simplest nonlinear parabolic equation.
\begin{theorem}
\label{th:Fisher}
Consider the Fisher-KPP equation
	\begin{align}
		\label{eq:Fisher}
		\begin{cases}
			\dfrac{ \partial u }{ \partial t} = 
			\dfrac{ \partial^{2} u}{ \partial x^{2}} + u(1-u),
			 & (t,x) \in (0,\tend] \times [0,L], \\[2ex]
		\dfrac{ \partial^{j} u}{ \partial x^{j}}u(t,0)=\dfrac{ \partial^{j} u}{ \partial x^{j}}u(t,L),  & t \in [0,\tend], \ j=0,1, \\[2ex]			
			u(0,x) = f( x), & x \in [0,L],
	\end{cases}
	\end{align}
	with $L=4\pi$, $\tend=4$, and $f(x) = 0.5-0.5\sin\left(\frac{2\pi x}{L}\right)-\cos\left(\frac{4\pi x}{L}\right)+0.2\sin\left(\frac{8\pi x}{L}\right)$. Let $\bu=\bu(t,x)$ be the function represented in Figure~\ref{fig:Fisher}, and whose precise description in terms of Fourier-Chebyshev coefficients can be downloaded at~\cite{integratorcode}. Then, there exists a smooth solution $u$ of~\eqref{eq:Fisher} such that
\begin{align}
\label{eq:error_Fisher}
\sup_{t\in [0,\tend]} \sup_{x\in [0,L]} \vert u(t,x)-\bu(t,x) \vert \leq 5\times 10^{-2}.
\end{align}
\end{theorem}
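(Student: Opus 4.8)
The plan is to invoke the computer-assisted validation framework built up in Sections~\ref{sec:preliminaries}--\ref{sec:domaindecomposition}. First I would rescale space (to the domain $[0,2\pi]$) and time (to normalize the top-order coefficient), which casts~\eqref{eq:Fisher} in the form~\eqref{eq:PDE} with $R=1$, $g^{(1)}\equiv 0$, and $g^{(0)}(u)=c(u-u^2)$ for an explicit constant $c>0$ depending on $L$. Because $\tend=4$ is comparatively long and $u\equiv 0$ is an \emph{unstable} equilibrium --- so no crude, $\bu$-independent choice of $\L$ could make the Duhamel operator a contraction over the whole time window --- I would use the domain decomposition of Section~\ref{sec:domaindecomposition}: fix a number $M$ of subintervals, a subdivision $\tau_1,\dots,\tau_M$ with $\sum_m\tau_m=\tend/2$, a Chebyshev degree $K$, a Fourier truncation $N$, a decay rate $\nu>1$, and weights $\epsilon^{(m)}_{\infty N},\epsilon^{(m)}_\infty>0$.

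Next I would compute numerically an approximate solution $\bu\in(\X^{KN}_\nu)^M$ of the finite-dimensional zero-finding problem $F_{KN}(\bu)=0$ by a Newton iteration --- this $\bu$ being precisely the function depicted in Figure~\ref{fig:Fisher} --- together with $DF^M_{KN}(\bu)$ and a numerical approximate inverse $A_{KN}$. On each subdomain I would construct $\L^{(m)}$ as in Section~\ref{sec:defL}, taking the vectors $\bv^{(j)}$ to be constant-in-time approximations of $(g^{(j)})'(\bu)$ restricted to that subdomain, and fix an approximate diagonalization $\tL_N\approx Q\Lambda_N Q^{-1}$. Then I would assemble the operator $A$ as in Definition~\ref{def:A_n} and set $T=I-AF$ on $\X^M_\nu$, whose fixed points near $\bu$ are exactly the solutions of~\eqref{eq:domaindecomp}, hence of~\eqref{eq:Fisher}, since $A$ will be injective.

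The core of the argument is the rigorous evaluation, in interval arithmetic, of bounds $Y^{(m)}$, $Z^{(m)}_i$ and $W^{(m)}_{ij}$ satisfying~\eqref{e:def_Y}--\eqref{e:def_W}, by assembling the explicit formulas of Sections~\ref{sec:bounds} and~\ref{sec:adaptedbounds}, with the enclosures of the various integrals and interpolation errors delegated to Appendices~\ref{app:interp_err}--\ref{app:chi}. I would then check that there exist $r,\eta\in\RR^M_{>0}$ with $r\leq\rstar$ satisfying the inequalities~\eqref{e:inequalities1}--\eqref{e:inequalities2}, in practice by first solving for the smallest admissible $r$ and then for a suitable $\eta$. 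Theorem~\ref{thm:NewtonKantorovich} then yields a unique fixed point $u^*\in\bbox(\bu,r)$ of $T$; because~\eqref{e:inequalities2} forces $A$ to be injective, $u^*$ is a genuine zero of $F$, hence a solution of~\eqref{eq:Fisher}. Analyticity in $x$ follows from $u^*(t,\cdot)\in\ell^1_\nu$ with $\nu>1$, and a standard bootstrap in the variation-of-constants formula gives smoothness in $t$, so $u^*$ is a classical solution. Finally, Lemma~\ref{lem:norms} applied on each subdomain turns $\|u^*-\bu\|_{\X^M_\nu}\leq\max_m r^{(m)}$ into $\sup_{t\in[0,\tend]}\sup_{x\in[0,L]}|u^*(t,x)-\bu(t,x)|\leq\max_m\vartheta_{\epsilon^{(m)}}r^{(m)}$, and the numerical run of the code at~\cite{integratorcode} produces a right-hand side below $5\times 10^{-2}$, which is~\eqref{eq:error_Fisher}.

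The main obstacle is entirely quantitative: making every piece of the bounds --- above all the blocks $Z^{(m)}_i$, which govern whether the contraction condition~\eqref{e:inequalities2} can hold --- small enough that~\eqref{e:inequalities1}--\eqref{e:inequalities2} are simultaneously satisfiable, while keeping the resulting $r^{(m)}$ below the target error. This is precisely where the solution-adapted, time-independent-but-spatially-dependent choice of $\L$ and the subdivision of $[0,\tend]$ earn their keep, since the slow, unstable dynamics of Fisher-KPP over a window of length $4$ would overwhelm a naive Duhamel reformulation; tuning $M$, $K$, $N$, $\nu$ and the weights so that the verification goes through (and the error comes out below $5\times 10^{-2}$) is the practical crux, and is partly automated in the code.
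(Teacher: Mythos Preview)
Your proposal is correct and follows essentially the same approach as the paper: apply Theorem~\ref{thm:NewtonKantorovich} to the domain-decomposed map $F$ of Definition~\ref{eq:zeroMap_domaindecomp}, with the bounds of Sections~\ref{sec:bounds} and~\ref{sec:adaptedbounds} evaluated via the code at~\cite{integratorcode}, and then pass to the sup-norm via Lemma~\ref{lem:norms}. The paper additionally records the specific parameter choices $K=2$, $N=14$, $M=25$ (with the remaining data in \texttt{ivpdataFisher1.m}), which you leave implicit but correctly identify as the practical crux.
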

\begin{figure}[h!]
\centering
\includegraphics[scale=0.5]{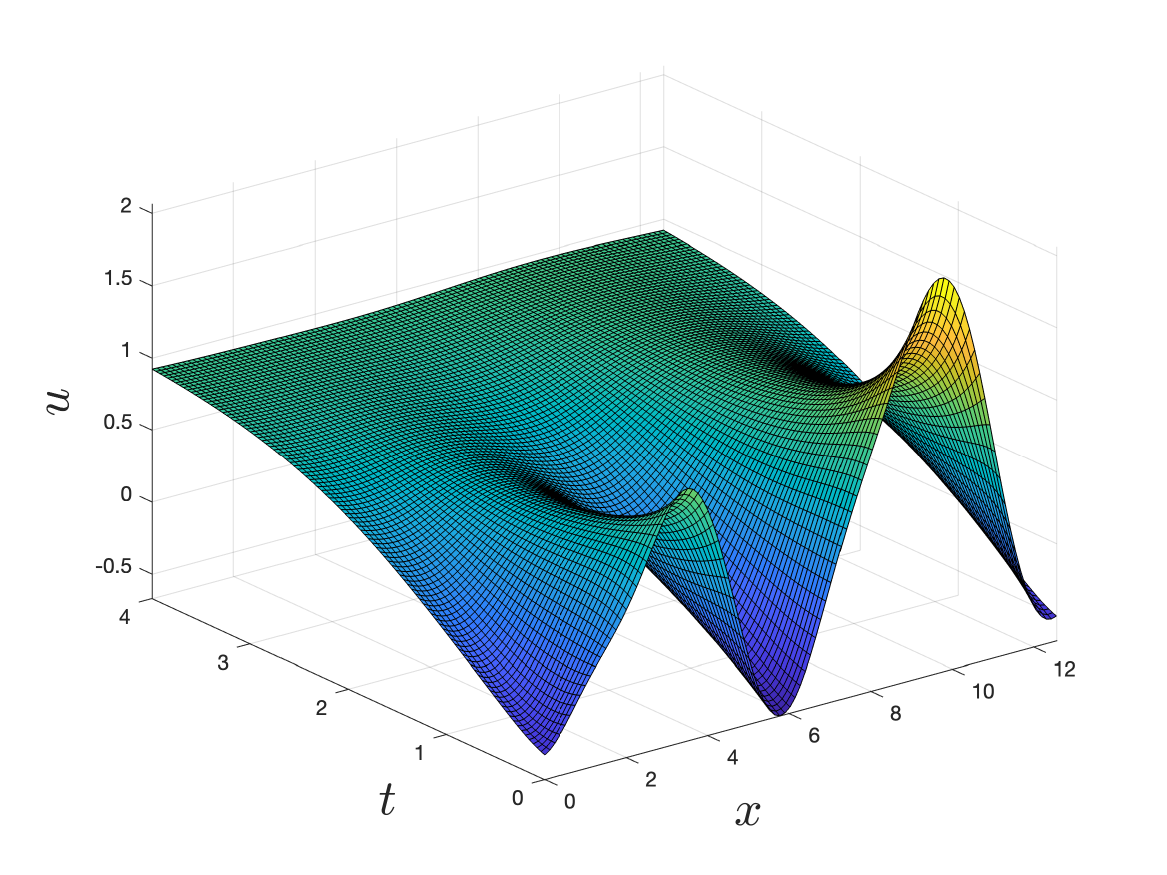}
\caption{The approximate solution $\bu$ of~\eqref{eq:Fisher}, which has been validated in Theorem~\ref{th:Fisher}.}
\label{fig:Fisher}
\end{figure}
This result is obtained by applying Theorem~\ref{thm:NewtonKantorovich} to the map $F$ of Definition~\ref{eq:zeroMap_domaindecomp} corresponding to~\eqref{eq:Fisher}, with $K=2$, $N=14$ and $M=25$. The lengths $\tau_m$ of the subdomains (which are not all the same), and the value of all the other parameters used in the proof can be found in the Matlab file \texttt{ivpdataFisher1.m}. The computational part of the proof, i.e., the evaluation of the bounds $Y$, $Z$ and $W$ derived in Section~\ref{sec:bounds} and Section~\ref{sec:domaindecomposition} for this specific approximate solution $\bu$ and the selected parameters, can be reproduced by running \runfile. 

\begin{remark}
\label{rem:Fisher}
The output of this procedure is actually slightly stronger than is stated in Theorem~\ref{th:Fisher}. Indeed, the successful application of Theorem~\ref{thm:NewtonKantorovich} implies the existence of a unique solution $u$ of~\eqref{eq:Fisher} in $\X_\nu^M$ such that $\left\Vert u-\bu\right\Vert_{\X_\nu^M}\leq 5\times 10^{-2}$. Lemma~\ref{lem:norms} then implies~\eqref{eq:error_Fisher}. The error bound that we obtain could be improved significantly. Indeed, in the proof of Theorem~\ref{th:Fisher} we tried to use a minimal amount of subdomains, namely $M=25$, but using $M=35$ instead we already get the error bound down to $1\times 10^{-3}$, and we could still get a much smaller error bound by further increasing $M$ as well as $K$.
\end{remark}

\medskip

Our method is also applicable to equations with higher order spatial derivatives, such as the Swift-Hohenberg equation, as was already highlighted in the introduction. 

\begin{theorem}
\label{th:SH_less_precise}
Consider the Swift-Hohenberg equation~\eqref{eq:SH}, with the same parameters and initial data as in Theorem~\ref{th:SH}. Let $\bu=\bu(t,x)$ be the function represented in Figure~\ref{fig:SH_less_precise}, and whose precise description in terms of Fourier-Chebyshev coefficients can be downloaded at~\cite{integratorcode}. Then, there exists a smooth solution $u$ of~\eqref{eq:SH} such that
\begin{align*}
\sup_{t\in [0,\tend]} \sup_{x\in [0,L]} \vert u(t,x)-\bu(t,x) \vert \leq 5\times 10^{-2}.
\end{align*}
\end{theorem}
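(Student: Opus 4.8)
The plan is to apply Theorem~\ref{thm:NewtonKantorovich} to the Newton-like operator $T = I - AF$ associated with the Swift--Hohenberg equation~\eqref{eq:SH}, exactly as in the proof of Theorem~\ref{th:Fisher}, but with the specific approximate solution $\bu$ depicted in Figure~\ref{fig:SH_less_precise} and with computational parameters (the truncation orders $K$, $N$, the number of subdomains $M$, the subdomain lengths $\tau_m$, the weights $\epsilon^{(m)}_{\infty N}$, $\epsilon^{(m)}_{\infty}$, the decay rate $\nu$, and the auxiliary parameters $K_0$, $\rho_n$, $q_n$, etc.) chosen appropriately and recorded in the accompanying data file. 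First I would note that~\eqref{eq:SH} is of the form~\eqref{eq:PDE} with $R=2$, nonlinearity $g^{(0)}(u) = \alpha u - u^3$ and $g^{(j)} = 0$ for $j=1,2,3$ (after expanding $-(\partial_x^2+1)^2 = -\partial_x^4 - 2\partial_x^2 - 1$, the lower-order linear terms $-2\partial_x^2 u - u + \alpha u$ are absorbed into $g^{(0)}$ and its derivative, contributing to the $\alpha_j$'s and ultimately to $\L$). Then the operators $\L^{(m)}$ are constructed as in Section~\ref{sec:defL} on each of the $M$ subdomains, using piecewise-constant-in-time approximations $\bv^{(j),(m)}$ of $s\mapsto (g^{(j)})'(\bu)(s) = \alpha - 3\bu(s)^2$ (for $j=0$), and the approximate inverse $A$ is defined as in Definition~\ref{def:A_n}.

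The core of the proof is then the verification, with computer assistance using interval arithmetic, that the bounds $Y^{(m)}$, $Z^{(m)}_i$, $W^{(m)}_{ij}$ derived in Section~\ref{sec:bounds} and adapted in Section~\ref{sec:adaptedbounds} satisfy the hypotheses~\eqref{e:def_Y}--\eqref{e:def_W} of Theorem~\ref{thm:NewtonKantorovich}, and that there exist radii $r = (r^{(m)})$ with $r \le \rstar$ and weights $\eta = (\eta^{(m)})$ for which the inequalities~\eqref{e:inequalities1}--\eqref{e:inequalities2} hold. Concretely, I would run the code \runfile\ on the data for this example; a successful run produces such $r$ and $\eta$, and hence yields a unique fixed point $u^*$ of $T$ in $B^M_{r,\epsilon}(\bu)$, which by injectivity of $A$ (a posteriori guaranteed since~\eqref{e:inequalities2} forces the relevant bound below $1$) is a zero of $F$, i.e. a solution of~\eqref{eq:SH}. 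Smoothness in $x$ follows because $u^* \in \X_\nu^M$ with $\nu>1$, so each Fourier mode decays geometrically, and smoothness in $t$ follows from the variation-of-constants representation together with bootstrapping. Finally, Lemma~\ref{lem:norms} converts the $\X_\nu^M$-error bound $\max_m \vartheta_{\epsilon^{(m)}} r^{(m)}$ into the uniform pointwise bound, which I would arrange to be at most $5\times 10^{-2}$ (in fact the raw bound is smaller; $5\times 10^{-2}$ is a safe rounding, just as in Theorem~\ref{th:Fisher}).

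The main obstacle, as usual with this class of arguments, is not conceptual but computational: because $R=2$ the eigenvalues $\lambda_n \sim -n^4$ in~\eqref{eq:eigenvalues} make the semigroup very stiff on the high modes, which on the one hand helps the tail bounds $\chi^{(j)}_N$ but on the other hand makes the interpolation-error estimates for the stiff exponentials $t\mapsto e^{(t+1)\tau_m\lambda_n^{(m)}}$ (entering $Y_{\infty N}$, $Z_{\infty N}$, $W_{\infty N}$ via Lemma~\ref{lem:Pi_inftyNG} and Proposition~\ref{prop:Y_infN}) delicate; one must choose $\tau_m$ small enough and exploit the splitting of Remark~\ref{rem:interp_error} with a large auxiliary $K_0$ so these terms stay negligible. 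Additionally, since we only aim for the comparatively modest error bound $5\times 10^{-2}$, we can afford a relatively coarse discretization (small $K$, moderate $M$), which keeps the linear-algebra computations — principally the norm of $I_{KN} - A_{KN}^M DF^M_{KN}(\bu)$ in~\eqref{eq:Z0} — manageable; the delicate balance is in choosing $M$ just large enough that the $Z$ bound stays below $1$ while keeping the finite-dimensional problem small. All the numerical choices resolving this balance are provided in the accompanying data file and the verification is reproducible via \runfile.
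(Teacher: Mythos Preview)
Your proposal is correct and follows essentially the same approach as the paper: apply Theorem~\ref{thm:NewtonKantorovich} to $T=I-AF$ with the domain-decomposition setup of Section~\ref{sec:domaindecomposition}, verify the $Y$, $Z$, $W$ bounds via interval arithmetic by running \runfile\ on the data in \texttt{ivpdataSwiftHohenberg2.m} (the paper uses $K=2$, $N=19$, $M=108$ for this theorem), and convert the resulting $\X_\nu^M$-error to a pointwise bound via Lemma~\ref{lem:norms}. One small inaccuracy: when casting~\eqref{eq:SH} into the form~\eqref{eq:PDE} with $R=2$, the $-2\partial_x^2 u$ term cannot be absorbed into $g^{(0)}$ but rather gives $g^{(2)}(u)=-2u$, and the $-u$ term combines with $\alpha u$ so that $g^{(0)}(u)=(\alpha-1)u-u^3$; this does not affect the structure of your argument.
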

\begin{figure}[h!]
\centering
\includegraphics[scale=0.5]{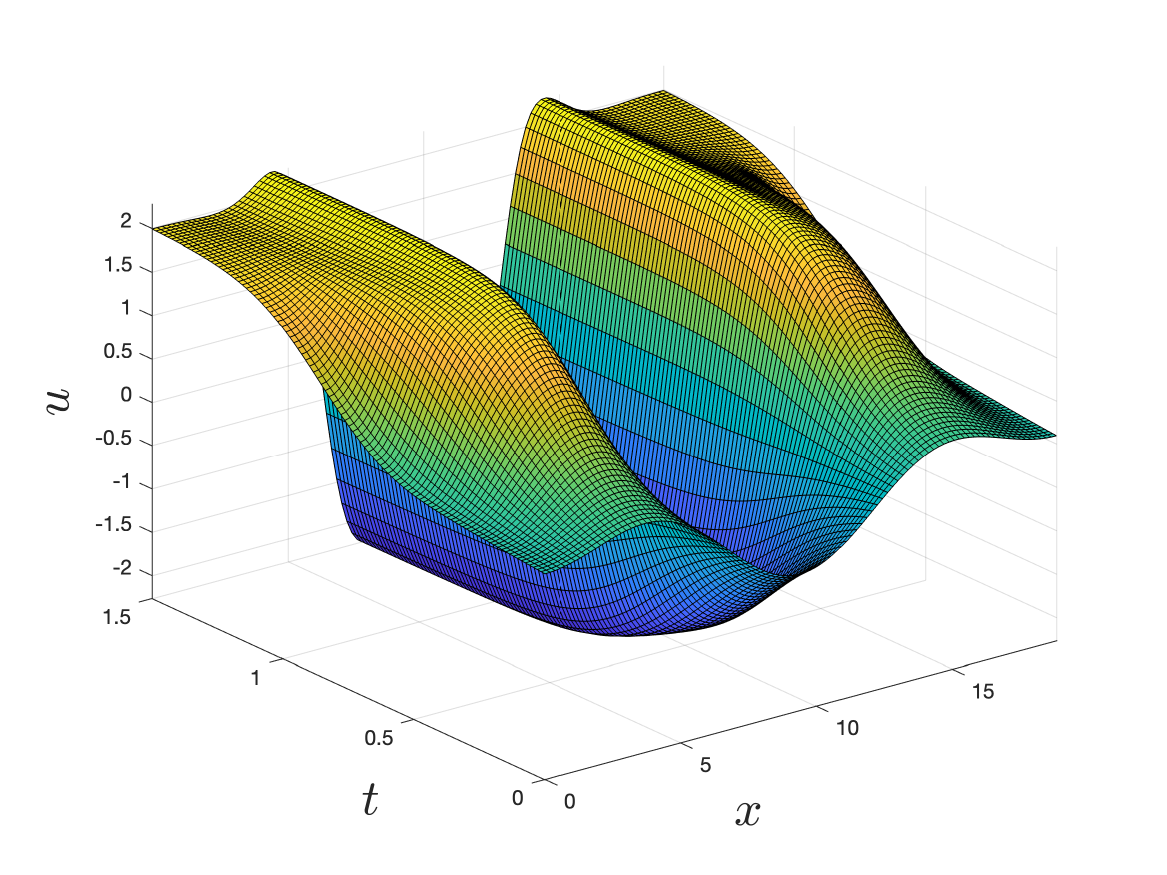}
\caption{The approximate solution $\bu$ of~\eqref{eq:SH}, which has been validated in Theorem~\ref{th:SH_less_precise}.}
\label{fig:SH_less_precise}
\end{figure}
The computational part of the proofs of Theorem~\ref{th:SH} and Theorem~\ref{th:SH_less_precise} can be reproduced by running \runfile. The value of all the parameters used in these proofs can be found in the Matlab files \texttt{ivpdataSwiftHohenberg1.m} and  \texttt{ivpdataSwiftHohenberg2.m}.

\begin{remark}
\label{rem:sharper_error_bound}
The only difference between Theorem~\ref{th:SH} and Theorem~\ref{th:SH_less_precise} is the size of the finite dimensional subspace $\left(\X_\nu^{KN}\right)^M$, that is used in each proof. For Theorem~\ref{th:SH_less_precise}, we tried to minimize the dimension of $\left(\X_\nu^{KN}\right)^M$ for which we could get a successful proof, and ended up taking $K=2$, $N=19$ and $M=108$. With these parameters, the proof can be run on a standard laptop in about 15 minutes. For Theorem~\ref{th:SH}, we used a larger approximation subspace $\left(\X_\nu^{KN}\right)^M$, with $K=5$, $N=30$ and $M=100$, and therefore got a sharper error bound, at the cost of more computation time and memory requirements.

The two approximate solutions $\bu$ used in Theorem~\ref{th:SH} and Theorem~\ref{th:SH_less_precise}, and depicted in Figure~\ref{fig:SH} and Figure~\ref{fig:SH_less_precise} respectively, are not exactly the same, but they are close enough to be indistinguishable to the naked eye. Yet, by using a higher dimensional subspace $\left(\X_\nu^{KN}\right)^M$ for the proof, we manage to obtain significantly sharper error bounds.
\end{remark}

\medskip 

We can also deal with nonlinear terms involving spatial derivatives.
\begin{theorem}
\label{th:KS}
Consider the Kuramoto–Sivashinsky equation
	\begin{align}
		\label{eq:KS}
		\begin{cases}
			\dfrac{ \partial u }{ \partial t} = 
			-\dfrac{ \partial^{4} u}{ \partial x^{4}} -\dfrac{ \partial^{2} u}{ \partial x^{2}} -\displaystyle\frac{1}{2}\dfrac{ \partial }{ \partial x} u^2
			, & (t,x) \in (0,\tend] \times [0,L], \\[2ex]
		\dfrac{ \partial^{j} u}{ \partial x^{j}}u(t,0)=\dfrac{ \partial^{j} u}{ \partial x^{j}}u(t,L),  & t \in [0,\tend], \ j=0,1,2,3, \\[2ex]			
			u (0, x) = f ( x ), & x \in [0,L],
	\end{cases}
	\end{align}
	with $L=5\pi$, $\tend=12$, and $f(x) = -\sin\left(\frac{2\pi x}{L}\right)$. Let $\bu=\bu(t,x)$ be the function represented in Figure~\ref{fig:KS}, and whose precise description in terms of Fourier-Chebyshev coefficients can be downloaded at~\cite{integratorcode}. Then, there exists a smooth solution $u$ of~\eqref{eq:KS} such that
\begin{align*}
\sup_{t\in [0,\tend]} \sup_{x\in [0,L]} \vert u(t,x)-\bu(t,x) \vert \leq 4\times 10^{-7}.
\end{align*}
\end{theorem}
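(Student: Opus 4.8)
The proof will cast~\eqref{eq:KS} in the abstract form~\eqref{eq:PDE} and then apply the domain--decomposition machinery of Section~\ref{sec:domaindecomposition} together with Theorem~\ref{thm:NewtonKantorovich}, exactly as for the other examples. First I would rescale the spatial variable $x\in[0,L]$ to $[0,2\pi]$, which turns each $\partial_x^j$ into $(2\pi/L)^j\partial^j$; absorbing the resulting constant $(2\pi/L)^{2R}$ in front of the leading fourth--order term by a rescaling of time, the equation becomes of the form~\eqref{eq:PDE} with $R=2$, $g^{(0)}=g^{(3)}=0$, $g^{(2)}(u)=c_2\,u$ and $g^{(1)}(u)=c_1\,u^2$ for explicit constants $c_1,c_2$ depending on $L$. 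Note that, in contrast with the variant~\eqref{eq:KSnotnice}, the nonlinear term $-\tfrac12\partial_x u^2$ is genuinely of the admissible shape $\partial_x g^{(1)}(u)$, so no further change of variables is needed and the nonlinearity is merely quadratic (in particular $(g^{(1)})''$ is constant and $(g^{(2)})''=0$, which simplifies the $W$--bound).

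Next I would set up the domain decomposition of Section~\ref{sec:setup_DD}: split $[0,2\tau]$ into $M$ subintervals, numerically compute an approximate solution $\bu\in(\X^{KN}_\nu)^M$ (its Chebyshev$\times$Fourier coefficients are the data provided at~\cite{integratorcode}), and on each subdomain $m$ build $\L^{(m)}$ following Section~\ref{sec:defL} by selecting piecewise--constant vectors $\bv^{(j,m)}$ approximating $s\mapsto(g^{(j)})'(\bu^{(m)}(s))$, assembling $\tL^{(m)}_N$, and numerically diagonalizing it as $\L^{(m)}_N=Q^{(m)}\Lambda^{(m)}_N(Q^{(m)})^{-1}$. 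Together with a numerically computed approximate inverse $A_{KN}$ of $DF^M_{KN}(\bu)$ this produces the operators $\widehat{DF}$, $A$ (Definition~\ref{def:A_n}) and the Newton--like map $T=I-AF$ on $\X^M_\nu$.

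The heart of the proof is then the rigorous evaluation, in interval arithmetic, of component bounds $Y^{(m)}$, $Z^{(m)}_i$ and $W^{(m)}_{ij}$ satisfying~\eqref{e:def_Y}--\eqref{e:def_W}, using the formulas of Section~\ref{sec:bounds} and their domain--decomposition counterparts in Section~\ref{sec:adaptedbounds}; here the strong dissipation $\Re(\lambda^{(m)}_n)\sim-(2\pi/L)^{4}n^4\to-\infty$ makes the tail constants $\chi^{(j)}_N$ and the matching exponents $\mu_{m,l}$ of Lemma~\ref{lem:GinvPi_infty} harmless. One then exhibits radii $r=(r^{(m)})$ with $r\le\rstar$ and weights $\eta=(\eta^{(m)})$ satisfying the decoupled inequalities~\eqref{e:inequalities1}--\eqref{e:inequalities2}; this verification is the computer--assisted step, carried out by \runfile\ with the parameters $K$, $N$, $M$, $\nu$ and the weights $\epsilon^{(m)}_{\infty N}$, $\epsilon^{(m)}_\infty$ recorded in the accompanying data file. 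Theorem~\ref{thm:NewtonKantorovich} then gives a unique fixed point $u^*$ of $T$ in $B^M_{\rstar,\epsilon}(\bu)$; since $A$ is injective (which holds a posteriori because the relevant component of $Z$ is $<1$), $u^*$ is a zero of $F$ and hence a solution of~\eqref{eq:KS}. Undoing the space/time rescalings and invoking Lemma~\ref{lem:norms} yields $\sup_{t,x}|u-\bu|\le\vartheta_\epsilon\max_m r^{(m)}$, which the chosen parameters bring below $4\times10^{-7}$; smoothness of $u$ follows from the choice $\nu>1$ (forcing analyticity in $x$) together with parabolic bootstrapping in $t$.

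I expect the main obstacle to be computational rather than conceptual. Because $\tend=12$ is large and the Kuramoto--Sivashinsky flow on a domain of this size exhibits genuinely nontrivial spatio--temporal dynamics with several linearly unstable Fourier modes, a single time domain is hopeless: one needs $M$ large, with each $\L^{(m)}$ tracking $DH(\bu)$ closely enough that the $\Phi^\epsilon$ and $\tilde\Phi^\epsilon$ contributions to $Z_{KN}$ and $Z_{\infty N}$ (multiplied by the interpolation constant $\sigma_{K,0}$) remain comfortably below $1$, while $N$ must be large enough that $Z_\infty\propto\chi^{(j)}_N$ is negligible, and the weights $\epsilon^{(m)}_{\infty N},\epsilon^{(m)}_\infty$ must be tuned so that the full system~\eqref{e:inequalities1}--\eqref{e:inequalities2} closes with an $r$ small enough to reach $4\times10^{-7}$. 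Balancing these competing requirements --- while keeping the linear algebra with the $(2N{+}1)(K{+}1)M$--dimensional matrices $A_{KN}$, $DF^M_{KN}(\bu)$ tractable --- is where essentially all of the work lies.
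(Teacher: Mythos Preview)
Your proposal is correct and follows precisely the approach of the paper: recast~\eqref{eq:KS} in the form~\eqref{eq:PDE} with $R=2$ and quadratic $g^{(1)}$, apply the domain--decomposition setup of Section~\ref{sec:domaindecomposition}, and verify the hypotheses of Theorem~\ref{thm:NewtonKantorovich} by rigorously evaluating the bounds of Sections~\ref{sec:bounds} and~\ref{sec:adaptedbounds} via \runfile. The paper records the specific parameters used ($K=5$, $N=30$, $M=100$, with the remaining choices in \texttt{ivpdataKuramoto1.m}), and your discussion of the computational challenges is on point.
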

The computational part of the proof of Theorem~\ref{th:KS} can be reproduced by running \runfile. This proof uses $K=5$, $N=30$ and $M=100$, while the value of all the other parameters used in the proof can be found in the Matlab file \texttt{ivpdataKuramoto1.m}.
\begin{figure}[h!]
\centering
\includegraphics[scale=0.5]{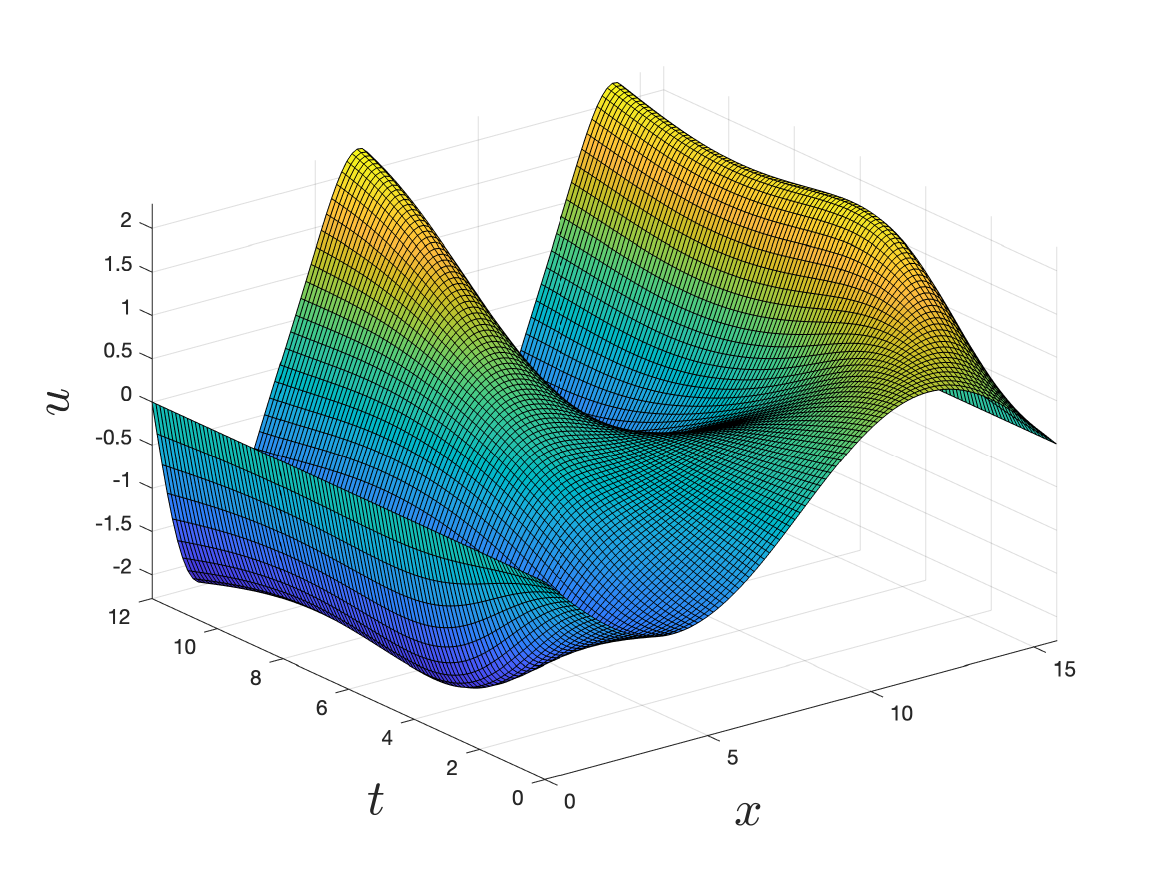}
\caption{The approximate solution $\bu$ of~\eqref{eq:KS}, which has been validated in Theorem~\ref{th:KS}.}
\label{fig:KS}
\end{figure}

\medskip

Our final example is the Ohta-Kawasaki equation, which models the evolution of diblock copolymer melts~\cite{OhtKaw86}.
\begin{theorem}
\label{th:OK}
Consider the Ohta-Kawasaki equation
	\begin{align}
		\label{eq:OK}
		\begin{cases}
			\dfrac{ \partial u }{ \partial t} = 
			-\displaystyle\frac{1}{\gamma^2}\dfrac{ \partial^{4} u}{ \partial x^{4}} -\dfrac{ \partial^{2} }{ \partial x^{2}}(u-u^3) -\sigma(u-m)
			, & (t,x) \in (0,\tend] \times [0,L], \\[2ex]
		\dfrac{ \partial^{j} u}{ \partial x^{j}}u(t,0)=\dfrac{ \partial^{j} u}{ \partial x^{j}}u(t,L),  & t \in [0,\tend], \ j=0,1,2,3, \\[2ex]			
			u (0, x ) = f ( x ), & x \in [0,L], 
	\end{cases}
	\end{align}
	with $\gamma = \sqrt{8}$, $\sigma=1/5$, $m=1/10$, $L=2\pi$, $\tend=5$, and $f(x) = m+\frac{1}{10}\cos\left(\frac{2\pi x}{L}\right)$. Let $\bu=\bu(t,x)$ be the function represented in Figure~\ref{fig:OK}, and whose precise description in terms of Fourier-Chebyshev coefficients can be downloaded at~\cite{integratorcode}. Then, there exists a smooth solution $u$ of~\eqref{eq:OK} such that
\begin{align*}
\sup_{t\in [0,\tend]} \sup_{x\in [0,L]} \vert u(t,x)-\bu(t,x) \vert \leq 3\times 10^{-3}.
\end{align*}
\end{theorem}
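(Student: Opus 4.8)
The plan is to prove Theorem~\ref{th:OK} by applying the fixed point Theorem~\ref{thm:NewtonKantorovich}, in its $M>1$ form, to the Newton-like operator $T=I-AF$ attached to~\eqref{eq:OK}, following verbatim the domain decomposition construction of Section~\ref{sec:domaindecomposition}; the computer-assisted part then consists in rigorously evaluating the bounds of Section~\ref{sec:bounds} and Section~\ref{sec:adaptedbounds} and checking the inequalities~\eqref{e:inequalities1}--\eqref{e:inequalities2}.

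First I would perform the change of variables that puts~\eqref{eq:OK} into the normalized form~\eqref{eq:PDE}: after rescaling space and time so that the fourth-order term has unit coefficient, we are in the case $R=2$, with $(-1)^{R+1}\partial^{2R}=-\partial^4$ and polynomial nonlinearities $g^{(2)}(u)=u^3-u$ (the $j=2$ contribution of $-\partial_{xx}(u-u^3)$), $g^{(0)}(u)=-\sigma u+\sigma m$, and $g^{(1)}=g^{(3)}=0$. The linearization of~\eqref{eq:OK} around the mean state $u\equiv m$ has unstable Fourier modes (the first two nonzero ones at these parameter values), so a crude time-independent choice such as $\L=-\partial^4$ cannot produce a contraction over $\tend=5$; this is exactly the regime for which the solution-adapted, piecewise-constant $\L^{(m)}$ of Section~\ref{sec:defL} is designed.

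Next I would split $[0,\tend]$ into $M=100$ subintervals of (dynamics-adapted) lengths $\tau_m$, compute on each a numerical approximation $\bu^{(m)}\in\X^{KN}_\nu$ with $K=5$ Chebyshev coefficients in time and $N=30$ Fourier modes in space, and assemble $\bu\in(\X^{KN}_\nu)^M$ (for a decay rate $\nu>1$, which is what makes the solution analytic in $x$). On each subdomain I would build $\L^{(m)}$ as in Section~\ref{sec:defL}, from constant-in-time approximations of $s\mapsto(g^{(j)})'(\bu^{(m)})(s)$ and an approximate eigendecomposition $\L_N^{(m)}=Q^{(m)}\Lambda_N^{(m)}(Q^{(m)})^{-1}$, form the zero-finding map $F$ of Definition~\ref{eq:zeroMap_domaindecomp}, a numerical approximate inverse $A_{KN}$ of $DF^M_{KN}(\bu)$, and the full approximate inverse $A$ of Definition~\ref{def:A_n}, keeping the $\Pi_{\infty N}G\Pi_{KN}$ coupling block for the reason spelled out in Remark~\ref{rem:def_approx_inv}. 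After fixing weights $\epsilon^{(m)}_{\infty N},\epsilon^{(m)}_{\infty}>0$, I would evaluate in interval arithmetic the bounds $Y^{(m)}$, $Z^{(m)}_i$, $W^{(m)}_{ij}$, each decomposed into its $KN$, $\infty N$ and $\infty$ parts as in Section~\ref{sec:adaptedbounds}, and search for radii $r\le\rstar$ and weights $\eta$ satisfying~\eqref{e:inequalities1}--\eqref{e:inequalities2}. Theorem~\ref{thm:NewtonKantorovich} then gives a unique fixed point $u^*$ of $T$ in $B^M_{r,\epsilon}(\bu)$; injectivity of $A$ makes $u^*$ a zero of $F$, hence a solution of~\eqref{eq:OK} (smoothness following from parabolic regularity, analyticity in $x$ being built into $\ell^1_\nu$), and Lemma~\ref{lem:norms} turns $\|u^*-\bu\|_{\X^M_\nu}\le\max_m r^{(m)}$ into the claimed uniform bound $3\times10^{-3}$ (the rescalings being mere relabelings of points, they leave the sup-norm of $u^*-\bu$ unchanged).

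The main obstacle is computational rather than structural: the fourth-order dissipation makes the eigenvalues $\lambda_n\sim-n^4$ extremely stiff, while the cubic nonlinearity sits inside the second spatial derivative, so that $\D^2$ amplifies the relevant convolutions (of up to three sequences) by a factor $n^2$ in the $Z$ and $W$ bounds; keeping $Z<1$ with room to spare for $\tfrac12\sum W r$ over the comparatively long horizon $\tend=5$, with genuinely unstable modes in play, is what forces the large $M=100$ together with a careful tuning of the $\tau_m$, of the constant-in-time approximations defining the $\L^{(m)}$, and of the weights $\epsilon$ and $\eta$. The one delicate analytic point is controlling the interpolation errors associated with the stiff exponentials $e^{(t+1)\tau_m\lambda^{(m)}_n}$ (Lemma~\ref{lem:interp_exp}, Lemma~\ref{lem:Pi_inftyNG}, Remark~\ref{rem:Pi_inftyNG}) uniformly in $|n|\le N$ with $K=5$ fixed --- which is precisely why the $\Pi_{\infty N}G\Pi_{KN}$ block is retained in $\widehat{DF}$ and $A$, so that the admissible $K$ need not grow with $N$.
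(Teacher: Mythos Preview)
Your approach is exactly the one the paper follows: apply Theorem~\ref{thm:NewtonKantorovich} in its $M>1$ form to the domain-decomposed Newton-like operator of Section~\ref{sec:domaindecomposition}, with the $\L^{(m)}$ from Section~\ref{sec:defL}, and rigorously verify~\eqref{e:inequalities1}--\eqref{e:inequalities2} using the bounds of Sections~\ref{sec:bounds} and~\ref{sec:adaptedbounds}. The only discrepancy is in the discretization parameters: the paper's proof for this theorem uses $K=2$, $N=21$ and $M=395$, not $K=5$, $N=30$, $M=100$ (those are the parameters for the Swift--Hohenberg and Kuramoto--Sivashinsky examples); since the proof is computer-assisted, the specific parameters are part of the data and must be the ones that actually make the inequalities hold.
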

The computational part of the proof of Theorem~\ref{th:OK} can be reproduced by running \runfile. This proof uses $K=2$, $N=21$ and $M=395$, while the value of all the other parameters used in the proof can be found in the Matlab file \texttt{ivpdataOhtaKawasaki.m}. 
\begin{figure}[h!]
\centering
\includegraphics[scale=0.5]{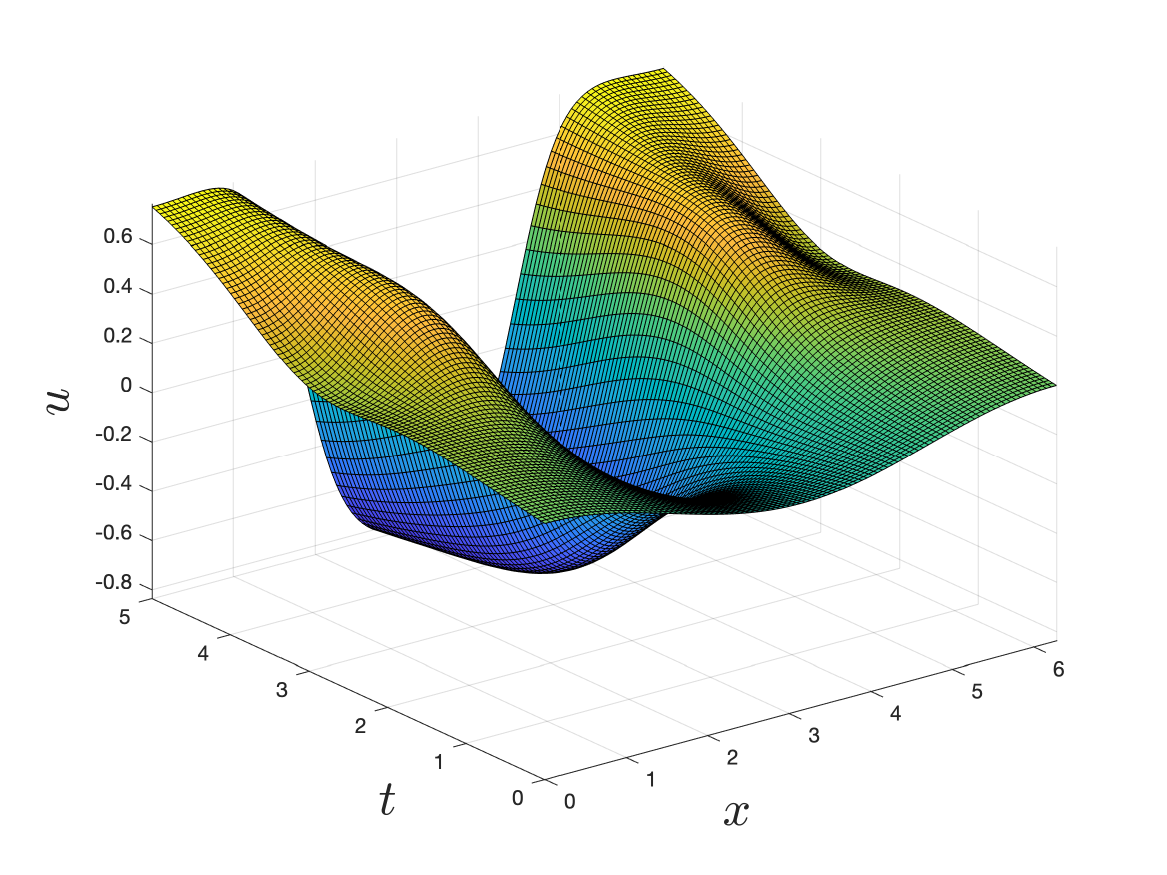}
\caption{The approximate solution $\bu$ of~\eqref{eq:OK}, which has been validated in Theorem~\ref{th:OK}.}
\label{fig:OK}
\end{figure}

\begin{remark}
\label{rem:many_parameters}
Whether computer-assisted proofs such as the ones presented in this section are successful or not (that is, whether the obtained values for the $Y$, $Z$ and $W$ bounds allow us to satisfy~\eqref{e:inequalities1}-\eqref{e:inequalities2}), depends strongly on the judicious choice of numerous computational parameters. The most obvious ones, which were already mentioned, are $K$, $N$ and $M$, which prescribe the finite dimensional subspace being used for the construction of $\L$ and of the approximate inverse $A$. However, there are other important parameters, such as the length $\tau_m$ of each subdomain, the weights  $\epsilon^{(m)}_{\infty N}$, $\epsilon^{(m)}_{\infty }$ and $\nu$, and (to a lesser extent) truncation parameters used to control interpolation errors, see Remark~\ref{rem:interp_error}, or quadrature errors, see Appendix~\ref{app:quadrature}.

At the moment, these parameters are mostly chosen by trial and error. A more automated way of selecting close to optimal parameters for a given problem would be very useful, but we did not investigate this question in this work.
\end{remark}

\subsection{The crucial influence of the operator $\L$}
\label{sec:ex_L}

In this work, we claimed that the sensible choice of the operator $\L$ described in Section~\ref{sec:defL}, which then impacts the definition of the zero finding map $F$ and of the subsequent fixed point reformulation $T$, is instrumental in getting more efficient computer-assisted proofs. We back up this claim by studying again equation~\eqref{eq:Fisher}, with the same parameters, initial data and final time, but now using a more naive choice 
\begin{align*}
	\L = (-1)^{R+1} \dfrac{ \partial^{2R} }{ \partial x^{2R}} . 
\end{align*}
We also keep the same values for $K$ and $N$ as in the proof of Theorem~\ref{th:Fisher} (namely $K=2$ and $N=14$), and then investigate the minimal number $M$ of subdomains that have to be used for the proof to be successful with this naive $\L$. We find that we then have to take at least $M=165$, whereas the proof of Theorem~\ref{th:Fisher} which used the more involved choice of $\L$ described in Section~\ref{sec:defL}, was already successful with $M=25$.

This example indeed showcases that our new choice of $\L$ allows us to use a finite dimensional subspace $\left(\X_\nu^{KN}\right)^M$ of much lower dimension, which then results in a proof that is significantly less expensive (for instance, we note that the definition of $A$ involves the numerical inversion of $DF^M_{KN}(\bu)$, which is a matrix of size $(K+1)(2N+1)M \times (K+1)(2N+1)M $).

The situation is similar for the other examples, but successful proofs with the naive choice of~$\L$ become even more costly so we did not even try to obtain one.

\subsection{Using time stepping}
\label{sec:ex_time_stepping}

For a given equation of the form~\eqref{eq:PDE}, when we try to use the approach presented in this paper with domain decomposition, in order to validate a solution for a larger and larger integration time $\tau$, we have to keep increasing the number $M$ of subdomains used, or alternatively to keep increasing the number $K$ of Chebyshev nodes used in each subdomain.

\begin{remark}
In practice, as soon as the $Y$ bound is no longer a limiting factor in getting a successful proof, it is usually more efficient to increase $M$ than to increase $K$. This was already noticed in the context of ODEs~\cite{BreLes18}, and is even more apparent in the current work, because increasing $M$ (i.e. taking more and thus smaller subdomains) means our operator $\L$ better approximates $DH(\bu)$ (with the notation of the introduction).
\end{remark}

However, $M$ cannot be taken arbitrarily large, because at some point the dimension of the subspace $\left(\X_\nu^{KN}\right)^M$ becomes too large for a computer to handle, memory-wise. This limitation is inherent to the domain-decomposition approach, where one has to deal with the entire orbit at once. 

On the other hand, with time stepping one can validate smaller portions of an the orbit one after the other, at the price of having to propagate the error bounds from one portion to the next. We can adapt our methodology to such a time stepping strategy. The details of controlling the propagating error bounds are given in Appendix~\ref{app:timestepping}. One setting in which these errors are relatively easy to manage is when the solution converges to an isolated equilibrium. Indeed, in that case we expect the semiflow to be contracting, and the errors to shrink between the beginning and the end of one step. Therefore, in principle one should be able to rigorously integrate the solution for as long as desired (see Remark~\ref{rem:integrating_forever}). Of course there is still a limitation related to computing power, but it is now predominantly related to speed (i.e., to the time taken by the computer to validate each step), rather than memory.

Below is an example of a validated solution of Fisher's equation~\eqref{eq:Fisher}, for which we used time stepping in order to go much further in time than in Section~\ref{sec:ex_domain_decomposition}.

\begin{theorem}
\label{th:Fisher_time_stepping}
Consider the Fisher-KPP equation~\eqref{eq:Fisher}, with the same parameters and initial data as in Theorem~\ref{th:Fisher}, except we now take $\tend=20$, and let $\bu=\bu(t,x)$ be the function represented in Figure~\ref{fig:Fisher_time_stepping}. Then, there exists a smooth solution $u$ of~\eqref{eq:Fisher} such that
\begin{align*}
\sup_{t\in [0,\tend]} \sup_{x\in [0,L]} \vert u(t,x)-\bu(t,x) \vert \leq 9\times 10^{-4}.
\end{align*}
\end{theorem}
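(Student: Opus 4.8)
The plan is to treat Theorem~\ref{th:Fisher_time_stepping} as another instance of the general validation machinery, the only new ingredient being that the integration time $\tend=20$ is too long for a single domain-decomposed problem of manageable size, so one must use the time-stepping scheme of Appendix~\ref{app:timestepping}. First I would partition $[0,\tend]$ into consecutive macro-steps $[T_{i-1},T_i]$, compute a numerical approximation $\bu$ on each macro-step (each of which may itself carry an internal domain decomposition as in Section~\ref{sec:domaindecomposition}), and set up on each macro-step a zero-finding map of the form in Definition~\ref{eq:zeroMap_domaindecomp}, where the initial data $f$ is now the rigorous $\ell^1_\nu$-enclosure of $u(T_{i-1},\cdot)$ produced by the previous step rather than an explicit function.

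The first macro-step would be handled exactly as in Theorem~\ref{th:Fisher}: apply Theorem~\ref{thm:NewtonKantorovich} to $T=I-AF$ with suitable $K$, $N$, $M$ and weights, obtaining a true solution within distance $r$ of $\bu$ in the $\X_\nu^M$-norm and, in particular, a rigorous $\ell^1_\nu$-enclosure of the solution at time $T_1$. For each subsequent macro-step, the only modification to the bounds derived in Sections~\ref{sec:bounds} and~\ref{sec:domaindecomposition} concerns the residual bound $Y$, which must now account for the nonzero width of the interval initial data; this width enters only through the $e^{\tau_m(t+1)\L^{(m)}}f$ term and the matching conditions, so the derivative bounds $Z$ and $W$ (which do not involve $f$) are unchanged. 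One then reapplies Theorem~\ref{thm:NewtonKantorovich}, obtains a locally unique solution on $[T_{i-1},T_i]$ consistent with the enclosure at $T_{i-1}$, extracts a fresh enclosure at $T_i$, and iterates. The global bound is the maximum of the per-step radii $r$, converted from the $\X_\nu$-norm to the sup-norm via Lemma~\ref{lem:norms}, which is reported as $9\times 10^{-4}$; the computational verification is carried out in \runfile.

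The hard part will be controlling the growth of the propagated enclosures: if the width of the $\ell^1_\nu$-enclosure of $u(T_i,\cdot)$ grew geometrically in $i$, long-time integration would be infeasible. This is precisely where dissipativity must be leveraged. Since the chosen initial data drives the solution of~\eqref{eq:Fisher} towards an isolated, asymptotically stable equilibrium, the linearized semiflow is eventually contracting: the tail eigenvalues $\lambda_n$ have negative real part, and, more importantly, the solution-adapted operator $\L$ of Section~\ref{sec:defL} captures the stable low-mode linearization accurately enough that the bound $Z$ stays comfortably below $1$ and the contraction of $T$ in the weighted norm~\eqref{e:def_norm_eta} is strong. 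Consequently, the enclosure width at the end of a macro-step should be no larger than — and typically smaller than — at its start, so the error bound stabilizes rather than blows up. A secondary, purely practical obstacle is keeping the numerical approximation $\bu$ accurate over the whole interval $[0,20]$, which I would address by recomputing $\bu$ afresh on each macro-step from the numerical endpoint of the previous one, and by adjusting $M$ and $K$ per step as needed.
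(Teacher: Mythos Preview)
Your proposal is correct and follows essentially the same approach as the paper: time stepping via Appendix~\ref{app:timestepping}, with each macro-step validated by Theorem~\ref{thm:NewtonKantorovich} (possibly with internal domain decomposition), and the rigorous $\ell^1_\nu$-enclosure of the endpoint fed forward as interval-valued initial data for the next step. The paper's proof is indeed just a pointer to \runfile\ with parameters in \texttt{ivpdataFisher4.m}, and your discussion of why the propagated errors do not blow up (convergence to an isolated stable equilibrium, contracting linearization captured by $\L$) matches the paper's reasoning in Section~\ref{sec:ex_time_stepping}.

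One point worth sharpening: you say the endpoint enclosure is ``extracted'' but do not specify how. Simply carrying the validation radius $r$ forward would be correct but wasteful; the paper's Appendix~\ref{app:timestepping} instead re-evaluates the variation-of-constants formula~\eqref{eq:ustar} at $t=1$ and bounds the resulting terms directly (reusing the $Z$- and $W$-type computations at the single time $t=1$), yielding a considerably tighter enclosure of $u^\ast(1)$ than the uniform-in-$t$ ball. This sharper endpoint estimate is what actually makes long-time stepping practical, and is why the paper reports errors of order $10^{-11}$ in the final step.
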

\begin{figure}[h!]
\centering
\includegraphics[scale=0.5]{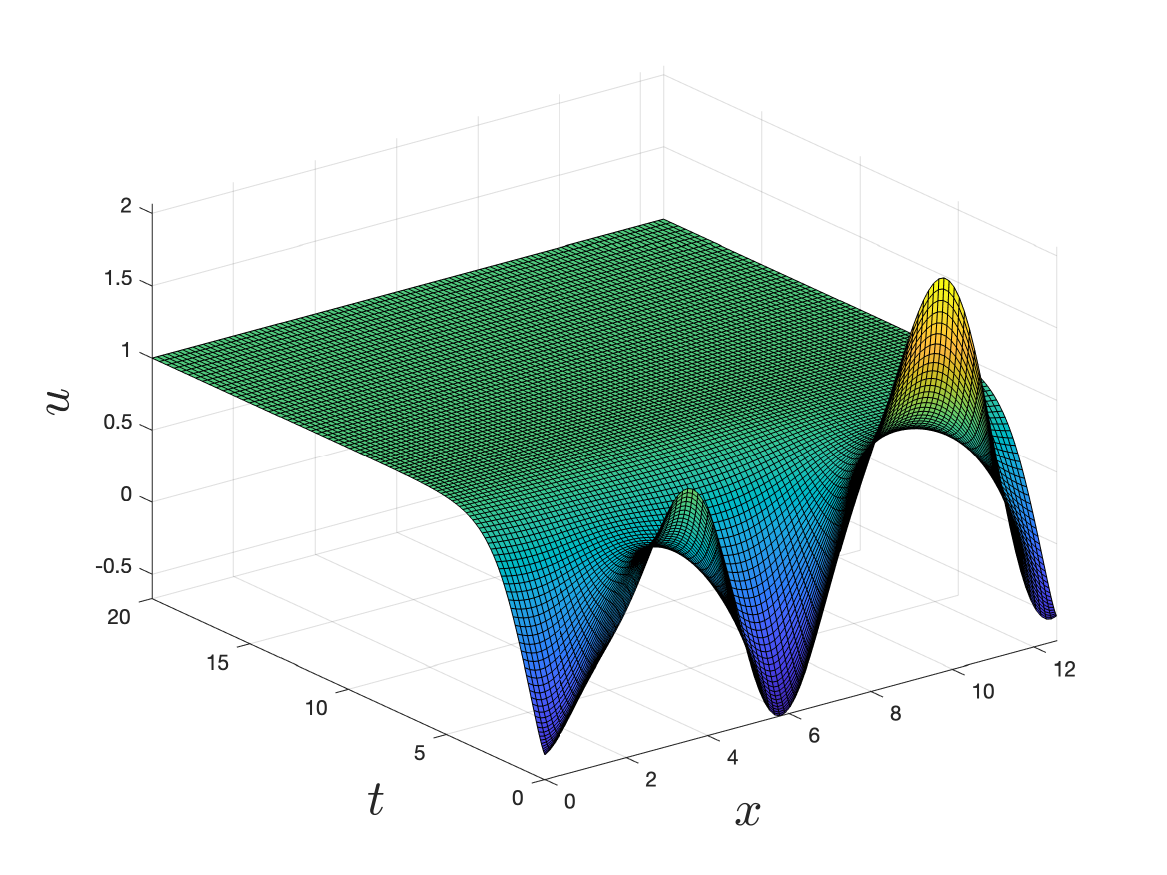}
\caption{The approximate solution $\bu$ of~\eqref{eq:Fisher}, which has been validated in Theorem~\ref{th:Fisher_time_stepping}.}
\label{fig:Fisher_time_stepping}
\end{figure}
The computational part of the proof of Theorem~\ref{th:Fisher_time_stepping} can be reproduced by running \runfile. The value of all the parameters used in this proof can be found in the Matlab file \texttt{ivpdataFisher4.m}. As opposed to the theorems presented in Section~\ref{sec:ex_domain_decomposition}, this one is proved using time stepping rather than domain decomposition. The final time was chosen to be $20$ so that the initial dynamics remains visible in the picture, but with time-stepping we can very easily obtain proofs for much larger finite times (especially because, once we are close to the equilibrium, much larger time steps can be used). Again, the precise error bounds we obtain in the proof are far from being uniform in time, and become much smaller once the solution is close to the equilibrium (for instance, in the last time step the error bound is of the order of $10^{-11}$).

\section*{Acknowledgments}

MB gratefully acknowledges the hospitality provided by the Department of Mathematics of the VU Amsterdam, which he visited several times during the preparation of this work.

\appendix
\section*{Appendix}
\section{Time stepping}
\label{app:timestepping}

When performing long time integration, an alternative for domain decomposition is to perform time stepping, i.e., to use the data at the final time of a step as the initial data for the next step. This may lead to a build-up of errors,
hence we need sharp enclosures for the endpoint of the orbit at each time step. We already have the one given by the validation radius, which is valid for all $t\in [-1,1]$, but we can also use the fact that we only need a bound for $t=1$ to get possibly better estimates. 
Indeed, suppose we have successfully proved the existence of a fixed 
point $u^{\ast} \in \bu + B_{r,\epsilon}(0)$ of $T$ for some $r >0$, then in the case of a single domain ($M=1$) we have
\begin{align}\label{eq:ustar}
	u^{\ast}(1) &= 
	e^{2\tau \L}f + \tau \int_{-1}^{1} e^{\tau\L(1-s)} \gamma \left( u^{\ast}(s) \right) \mbox{d}s  \\[1ex] &= 
	e^{2\tau \L}f + \tau \int_{-1}^{1} e^{\tau\L(1-s)} \gamma \left( \bu(s) \right) \mbox{d}s \nonumber  \\[1ex] & \quad + 
	\tau r \int_{-1}^{1} e^{\tau\L(1-s)} D\gamma \left( \bu \left( s \right) \right) h(s) \mbox{d}s  
	 \nonumber\\[1ex] & \quad + 
	\tau r^{2} \int_{-1}^{1} e^{\tau\L(1-s)} 
	\int_{0}^{1} \left( 1- \tilde{s} \right) D^{2}\gamma \left( \bu\left(s\right) + \tilde{s} r h\left(s\right) \right) \left[ h(s), h(s) \right] \mbox{d} \tilde{s} \ 			\mbox{d} s, \nonumber
\end{align}
for some $h \in B_{1, \epsilon}(0)$. We now see how we can derive an explicit enclosure of $u^{\ast}(1)$, based on this formula, studying separately the finite dimensional projection given by $\Pi_N$, and the higher order modes.

\begin{remark}
\label{rem:integrating_forever}
In some situations the error bound at $t=1$ may be tighter than the one at $t=-1$ (the initial data $f$). Observe that 
in the above expression, the only term which depends on the initial data is $e^{2\tau \L}f$. Suppose we started with an enclosure for $f$, for instance because $f$ is in fact the output of a previous time step, and assume that the eigenvalues of $\L$ are all negative, for example because the dynamics is close to a stable stationary state. We then get an enclosure for $e^{2\tau \L}f$ which is narrower than the one of $f$. Provided the error bounds we get for the remaining terms in~\eqref{eq:ustar} are small enough, it is thus possible that the total error bound we get for $u^*(1)$ is smaller than the one we started with for $f$, meaning that in such a situation errors do not grow from one time-step to the next. For the solution depicted in Figure~\ref{fig:Fisher_time_stepping}, we are seemingly getting very close to the homogeneous equilibrium, which is attractive, hence in that situation the eigenvalues of $\L$ should all be negative, and we can in principle integrate forever.

If we were to deal with an orbit which converges to a nonhomogeneous equilibrium, as is seemingly the case in Figure~\ref{fig:SH} for instance, there is one additional difficulty in rigorously integrating ``forever'' using time stepping, namely that the equilibrium is no longer isolated due to the spatial translation invariance of the problem. This then means that there will be a \emph{zero} eigenvalue in the linearized operator at the equilibrium, and that we cannot hope to shrink the errors in all directions. This issue is somewhat specific to our choice of periodic boundary conditions, and would for instance disappear with Dirichlet or Neumann boundary conditions.
\end{remark}

\subsection{Finite dimensional projection}

We can rewrite $\Pi_N u^{\ast}(1)$ as
\begin{align*}
	\Pi_N u^{\ast}(1) &=  
	Q e^{L \Lambda_N}Q^{-1}\Pi_N f + \tau Q \int_{-1}^{1} e^{\tau\Lambda_N(1-s)} Q^{-1} \Pi_N \gamma \left( \bu(s) \right) \mbox{d}s  \\[1ex] & \quad + 
	\tau r Q \int_{-1}^{1} e^{\tau\Lambda_N (1-s)} Q^{-1} \Pi_N D\gamma \left( \bu \left( s \right) \right) h(s) \mbox{d}s  
	\\[1ex] & \quad + 
	\tau r^{2} Q \int_{-1}^{1} e^{\tau\Lambda_N(1-s)}Q^{-1} 
	\int_{0}^{1} \left( 1- \tilde{s} \right) \Pi_N D^{2}\gamma \left( \bu\left(s\right) + \tilde{s} r h\left(s\right) \right) \left[ h(s), h(s) \right] \mbox{d} \tilde{s} \ 			\mbox{d} s, 
\end{align*}
where the first two terms can be evaluated explicitly (quadrature + error bounds), and we need to estimate the last two. A computation very similar to the one we made to get the $Z_{KN}$ bound yields, using Lemma~\ref{lem:convo_with_h_split},
\begin{align*}
&\left\vert \tau Q \int_{-1}^{1} e^{\tau\Lambda_N (1-s)} Q^{-1} \Pi_N D\gamma \left( \bu \left( s \right) \right) h(s) \mbox{d}s  \right\vert \leq \\
&\quad \vert Q \vert\, D_N(1)  \left(\vert Q^{-1} \vert \sum_{j=0}^{2R-1} \vert \D^j\vert \Pi_N \check\Phi\left(\left(g^{(j)}\right)'(\bu) - \bv^{(j)},\left(g^{(j)}\right)'(\bu)\right)  + \max(1,\epsilon_{\infty N}) \Upsilon\left(Q^{-1}\R_N \right) \right).
\end{align*}

We now turn our attention to the second order terms. Going back to the computation made for the $W_{KN}$ bound in~\eqref{eq:Z2_KN}, and applying Lemma~\ref{lem:norms}, \ref{lem:Upsilon} and \ref{lem:Theta} yields
\begin{align*}
& \left\vert \tau Q \int_{-1}^{1} e^{\tau\Lambda_N(1-s)}Q^{-1} 
	\int_{0}^{1} \left( 1- \tilde{s} \right) \Pi_N D^{2}\gamma \left( \bu\left(s\right) + \tilde{s} r h\left(s\right) \right) \left[ h(s), h(s) \right] \mbox{d} \tilde{s} \ \mbox{d} s \right\vert  \\
& \qquad \leq \frac{1}{2} \sum_{j=0}^{2R-1} \Upsilon \left( \vert Q\vert\, D_N(1) \,\vert Q^{-1}\vert\  \vert \D^j\vert  \right) \,
	  \left\vert \left(g^{(j)}\right)'' \right\vert \left( \left\Vert \bu \right\Vert_{\ell^1_\nu(C^0)} + \vartheta_\epsilon  r^*\right). 
\end{align*}

\subsection{Tail part (modes for $\vert n\vert > N$)}

For all $\vert n\vert >N$,
\begin{align*}
	u^{\ast}_n(1) &=  
	e^{2\tau \lambda_n}f_n + \tau \int_{-1}^{1} e^{\tau\lambda_n(1-s)} \gamma_n \left( \bu(s) \right) \mbox{d}s  \\[1ex] & \quad + 
	\tau r \int_{-1}^{1} e^{\tau\lambda_n(1-s)} D\gamma_n \left( \bu \left( s \right) \right) h(s) \mbox{d}s  
	\\[1ex] & \quad + 
	\tau r^{2} \int_{-1}^{1} e^{\tau\lambda_n(1-s)} 
	\int_{0}^{1} \left( 1- \tilde{s} \right) D^{2}\gamma_n \left( \bu\left(s\right) + \tilde{s} r h\left(s\right) \right) \left[ h(s), h(s) \right] \mbox{d} \tilde{s} \ 			\mbox{d} s.
\end{align*}
Therefore, this time looking at the computations made for $Z_\infty$ and $W_\infty$, we get
\begin{align*}
	\left\Vert \Pi_\infty u^{\ast}(1) \right\Vert_{\ell^1_\nu}  &\leq 
	\exp\left(2\tau\sup_{n>N} \Re(\lambda_n)\right) \left\Vert \Pi_\infty f \right\Vert_{\ell^1_\nu} + \sum_{\vert n\vert >N } \left\vert \tau \int_{-1}^{1} e^{\tau\lambda_n(1-s)} \gamma_n \left( \bu(s) \right) \mbox{d}s \right\vert \nu^{\vert n\vert} \\[1ex] & \quad + 
	r \vartheta_\epsilon  \sum_{j=0}^{2R-1}\chi_N^{(j)}  \left\Vert \tilde g^{(j)} \right\Vert_{\ell^1_\nu(C^0)} +
	\frac{r^2}{2}  \vartheta_\epsilon^2\sum_{j=0}^{2R-1}\chi^{(j)}_N \left\vert \left(g^{(j)}\right)'' \right\vert \left( \left\Vert \bu \right\Vert_{\ell^1_\nu(C^0)} + \vartheta_\epsilon  r^*\right),
\end{align*}
where the sum over $\vert n\vert >N$ is in fact finite and made of terms that can each be evaluated (quadrature + error bounds).

Combining this with the estimates obtained in the above sub-section, we can get an explicit enclosure of $u^{\ast}(1)$.

\begin{remark}
Here we assumed $M=1$, but time stepping can also be combined with the domain decomposition technique from Section~\ref{sec:domaindecomposition}, where we subdivide each time step into $M$ subdomains. Then, we can simply use the above estimates on each subinterval successively: we get an enclosure for $(u^*)^{(1)}(1)$, use~\eqref{eq:ustar} with $(u^*)^{(1)}(1)$ instead of $f$ to get an enclosure on $(u^*)^{(2)}(1)$, and so on, until we get an enclosure of $(u^*)^{(M)}(1)$ which is the solution at the end of one step using domain decomposition.
\end{remark}

\section{Interpolation error estimates}
\label{app:interp_err}

In this section, we provide explicit formulas for the constants $\sigma_{K,l}$ satisfying Definition~\ref{def:interp_cste}. Let us first show that, although most of the interpolation error estimates of this kind that can be found in the literature are stated for real-valued functions, they do also hold for complex valued-functions.

\begin{lemma}
\label{lem:interp_error_real}
Let $K\in\NN_{\geq 1}$, $l\in\{0,1,\ldots,K\}$ and $\sigma^\RR_{K,l}$ a real constant such that, for all $f\in C^{l+1}([-1,1],\RR)$,
\begin{align}
\label{eq:interp_error_real}
		\left \Vert f - P_{K}(f)  \right \Vert_{C^{0}} \leq 
		\sigma^\RR_{K,l} \left \Vert f^{(l+1)} \right \Vert_{C^{0}}.
	\end{align}
Then~\eqref{eq:interp_error_real} also holds for any $f\in C^{l+1}([-1,1],\CC)$, i.e., one can take $\sigma_{K,l}=\sigma^\RR_{K,l}$ in Definition~\ref{def:interp_cste}.
\end{lemma}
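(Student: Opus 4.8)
The plan is to reduce the complex-valued case to the real-valued one by separating real and imaginary parts, exploiting that Chebyshev interpolation at real nodes is a real-linear operator commuting with complex conjugation. First I would observe that, since the Chebyshev points $t_k^K$ are real, the interpolation operator $P_K$ acts independently on the real and imaginary parts: for $f = f_1 + if_2$ with $f_1,f_2 \in C^{l+1}([-1,1],\RR)$, one has $P_K(f) = P_K(f_1) + iP_K(f_2)$, because the interpolating polynomial is uniquely determined by the values at the nodes and these values are complex numbers whose real and imaginary parts are interpolated separately. Hence $f - P_K(f) = (f_1 - P_K(f_1)) + i(f_2 - P_K(f_2))$, and likewise $f^{(l+1)} = f_1^{(l+1)} + if_2^{(l+1)}$.

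The naive bound from applying~\eqref{eq:interp_error_real} to $f_1$ and $f_2$ separately and using the triangle inequality would give a constant $\sqrt{2}\,\sigma^\RR_{K,l}$ (or $2\sigma^\RR_{K,l}$), which is not what we want. To avoid this loss, the key trick is to rotate: for any $\theta \in [0,2\pi)$, the function $\Re(e^{-i\theta} f)$ is a real $C^{l+1}$ function, so~\eqref{eq:interp_error_real} applies to it, and since $P_K$ commutes with multiplication by the constant $e^{-i\theta}$ and with taking real parts, $\Re\bigl(e^{-i\theta}(f - P_K(f))\bigr) = \Re(e^{-i\theta}f) - P_K(\Re(e^{-i\theta}f))$. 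Therefore
\begin{align*}
\left\Vert \Re\bigl(e^{-i\theta}(f - P_K(f))\bigr) \right\Vert_{C^0} \leq \sigma^\RR_{K,l} \left\Vert \Re\bigl(e^{-i\theta} f^{(l+1)}\bigr) \right\Vert_{C^0} \leq \sigma^\RR_{K,l} \left\Vert f^{(l+1)} \right\Vert_{C^0}.
\end{align*}
Now for each fixed $t \in [-1,1]$ choose $\theta = \theta(t)$ so that $e^{-i\theta}(f - P_K(f))(t)$ is real and nonnegative; then $\vert (f-P_K(f))(t)\vert = \Re\bigl(e^{-i\theta(t)}(f-P_K(f))(t)\bigr) \leq \sigma^\RR_{K,l}\Vert f^{(l+1)}\Vert_{C^0}$. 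Taking the supremum over $t$ yields exactly $\Vert f - P_K(f)\Vert_{C^0} \leq \sigma^\RR_{K,l}\Vert f^{(l+1)}\Vert_{C^0}$, so $\sigma_{K,l} = \sigma^\RR_{K,l}$ works.

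I do not expect any serious obstacle here; the only point requiring a little care is the justification that $P_K$ commutes with real-linear operations on $\CC$, i.e. that $P_K(\Re g) = \Re(P_K g)$ and $P_K(cg) = cP_K(g)$ for constant $c \in \CC$, which both follow immediately from the defining property of $P_K$ as the unique polynomial of degree $\le K$ matching $g$ at the real nodes $t_0^K,\ldots,t_K^K$. The rotation argument is what guarantees the constant does not degrade by a dimensional factor.
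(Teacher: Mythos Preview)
Your proposal is correct and follows essentially the same approach as the paper: fix a point $x$, choose $\theta$ to be the argument of $(f-P_K(f))(x)$ so that $\lvert (f-P_K(f))(x)\rvert = \Re\bigl(e^{-i\theta}(f-P_K(f))(x)\bigr)$, use that $P_K$ commutes with multiplication by $e^{-i\theta}$ and with taking real parts, and then apply the real-valued bound to $\Re(e^{-i\theta}f)$. Your additional discussion of why the naive real/imaginary splitting loses a factor and your explicit justification of the commutation properties of $P_K$ are helpful but not strictly necessary.
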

\begin{proof}
Let $f\in C^{l+1}([-1,1],\CC)$ and fix any $x\in[-1,1]$. Denoting by $\theta\in[0,2\pi)$ the argument of the complex number $f(x) - P_{K}(f)(x)$, we get
\begin{align*}
\left\vert f(x) - P_{K}(f)(x)  \right \vert &=  e^{-i\theta}\left( f(x) - P_{K}(f)(x)  \right)  \\
&=  \Re \left( e^{-i\theta} (f(x) - P_{K}(f)(x))  \right)  \\
&= \Re\left(e^{-i\theta} f\right)(x) - P_{K}(\Re\left(e^{-i\theta} f\right))(x)  \\
&\leq \sigma^\RR_{K,l} \left \Vert \Re\left(e^{-i\theta} f\right)^{(l+1)} \right \Vert_{C^{0}} \\
&\leq \sigma^\RR_{K,l} \left \Vert  f^{(l+1)} \right \Vert_{C^{0}},
\end{align*}
where the imaginary terms that should appear in the second line must vanish since we are computing a real quantity.
\end{proof}

\begin{theorem}
	\label{thm:interp_error}
	Let $K\in\NN_{\geq 1}$, $l \in \{0,1,\ldots K\}$, and $\Lambda_K$ be the Lebesgue constant associated to interpolation at the $K+1$ Chebyshev nodes (see, e.g.,~\cite{ApproximationTheory}, and~\cite{EhlZel66} for explicit values and tight bounds of $\Lambda_K$). Then,
	\begin{align}
		\label{eq:interp_error}
		\sigma_{K,l} &\bydef 
		\begin{cases}
			\min\left\{\left(1+\Lambda_K\right)\left(\frac{\pi}{2}\right)^{l+1}\frac{(K-l)!}{(K+1)!}\ ,\ \frac{1}{l+1}\sum_{q=0}^{\left[\frac{l}{2}\right]}\frac{1}{4^q\, (l-2q)!\, (q!)^2}\right\}, & 0 \leq l \leq K-1, \\[2ex]
			\dfrac{1}{ 2^{K-1} (K+1)!} & l=K. 	
		\end{cases}
	\end{align}		
	satisfies Definition~\ref{def:interp_cste}.	
\end{theorem}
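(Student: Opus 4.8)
The plan is to establish the three regimes of the formula separately, invoking Lemma~\ref{lem:interp_error_real} at the outset so that it suffices to prove each bound for real-valued $f\in C^{l+1}([-1,1],\RR)$; the complex case then follows automatically with the same constant.

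First I would treat the endpoint case $l=K$. Here the interpolation error is controlled by the classical polynomial interpolation error formula: for $f\in C^{K+1}$ and any $x\in[-1,1]$ there is $\xi\in(-1,1)$ with
\begin{align*}
f(x)-P_K(f)(x) = \frac{f^{(K+1)}(\xi)}{(K+1)!}\prod_{k=0}^{K}(x-t_k).
\end{align*}
Since the nodes $(t_k)_{k=0}^K$ are the extrema of $T_K$, the nodal polynomial $\prod_{k=0}^K (x-t_k)$ is a scalar multiple of $(x^2-1)T_K'(x)$ (up to normalization $\frac{1}{K}\,(1-x^2)U_{K-1}(x)$, equivalently $\tfrac{1}{2^{K-1}}$ times a monic-normalized quantity), and one checks that its sup-norm on $[-1,1]$ equals $2^{1-K}$. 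This gives $\|f-P_K(f)\|_{C^0}\le \frac{1}{2^{K-1}(K+1)!}\|f^{(K+1)}\|_{C^0}$, which is exactly $\sigma_{K,K}$.

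Next I would handle $0\le l\le K-1$, where two competing bounds are claimed and we may take the minimum of the two. For the first bound, the idea is to insert the best polynomial approximant: write $f-P_K(f) = (f-p^*) - P_K(f-p^*)$ where $p^*\in\mathbb{P}_K$ is arbitrary, so $\|f-P_K(f)\|_{C^0}\le (1+\Lambda_K)\,\mathrm{dist}_{C^0}(f,\mathbb{P}_K)$; then apply a Jackson-type estimate bounding $\mathrm{dist}_{C^0}(f,\mathbb{P}_K)$ in terms of $\|f^{(l+1)}\|_{C^0}$. The sharp constant here comes from iterating the degree-$(l+1)$ Jackson inequality (each derivative step contributing a factor $\frac{\pi}{2}$ and reducing the degree by one), producing $(\pi/2)^{l+1}\frac{(K-l)!}{(K+1)!}$; the factor $(1+\Lambda_K)$ accounts for passing from best approximation to interpolation. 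For the second bound, the route is via the Chebyshev expansion of $f^{(l+1)}$: expressing $f$ through repeated integration of its Chebyshev series, the tail of the interpolation error can be summed explicitly, and the combinatorial sum $\frac{1}{l+1}\sum_{q=0}^{[l/2]}\frac{1}{4^q (l-2q)!(q!)^2}$ emerges from the coefficients relating $T_k^{(l+1)}$ back to lower Chebyshev polynomials (essentially the coefficients in the expansion of powers of the "antiderivative" operator on Chebyshev polynomials). I expect the careful bookkeeping of these combinatorial coefficients, and verifying that the stated closed form is genuinely an upper bound for all admissible $l$ and $K$ (not just asymptotically), to be the main obstacle; the Jackson-iteration bound is more standard but still requires care to track the constant $\frac{\pi}{2}$ at each step and to cite the correct form of Jackson's theorem on $[-1,1]$. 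Finally I would note that since each displayed inequality is of the form $\|f-P_K(f)\|_{C^0}\le (\cdot)\|f^{(l+1)}\|_{C^0}$, taking the minimum of the two constants is legitimate and yields the formula in~\eqref{eq:interp_error}, completing the proof.
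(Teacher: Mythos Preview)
Your proposal is correct and aligns with the paper's approach: the paper does not give a detailed proof either, but simply states that the $l=K$ case is classical, that the first term in the minimum follows from Jackson's theorem (citing \cite{Cheney}), that Lemma~\ref{lem:interp_error_real} handles the passage to complex-valued functions, and that the second term in the minimum is taken from \cite{BreLes18}. Your outline reproduces exactly this structure, with somewhat more detail on the mechanics of each step; the honest caveat you include about the combinatorial bookkeeping for the second bound is appropriate, since the paper itself defers that argument entirely to the cited reference.
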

For real-valued functions, the case $l=K$ is classical, as is the first part of the estimate for $l\leq K-1$, which follows from Jackson's theorem (see e.g.~\cite{Cheney}). Lemma~\ref{lem:interp_error_real} ensures these results carry over to complex-valued functions. However, the obtained estimates are far from sharp for small values of $K$. An alternative estimate, namely the second part of the minimum for $l\leq K-1$, was derived in~\cite{BreLes18}. In the important case $l=0$, this estimate gives $1$ and is thus smaller than $\left(1+\Lambda_K\right)\frac{\pi}{2}\frac{K!}{(K+1)!}$ for $K\leq 3$, but still relatively far from being sharp.

In the sequel, we introduce an optimization problem which is equivalent to finding the optimal value of $\sigma_{K,0}$ satisfying Definition~\ref{def:interp_cste}. According to Lemma~\ref{lem:interp_error_real}, we can restrict our attention to real-valued functions. Using interval arithmetic, we are then able to find a rigorous (and relatively tight) enclosure of the solution of the minimization problem, thereby providing an almost optimal value for $\sigma_{K,0}$.

\begin{definition}
Let $K\in\NN_{\geq 1}$ and $y\in\RR^{K+1}$. Let $P_K(y)$ denotes the polynomial of degree at most $K$ interpolating $y$ at the Chebyshev nodes, i.e. such that $P_K(y)(t_k) = y_k$ for $k=0 \dots K$. Whenever $y$ is such that
\begin{align}
\label{eq:hyp_y}
\vert y_k\vert \leq \vert t_k\vert \text{ for } k=0 \dots K,
\quad\text{and}\quad
\left\vert y_{k+1}-y_k \right\vert \leq \left\vert t_{k+1} - t_k \right\vert
\text{ for } k=0 \dots K-1,
\end{align}
we also introduce, for all $t\in[-1,1]$,
\begin{align*}
\overline{\varphi}_K(y)(t) = \min\bigl\{ y_{k^*} + t - t_{k^*},\, y_{{k^*}+1} -t + t_{{k^*}+1}, \, \vert t\vert \bigr\},
\end{align*}
and
\begin{align*}
\underline{\varphi}_K(y)(t) = \max\bigl\{ y_{k^*} - t + t_{k^*},\, y_{{k^*}+1} +t - t_{{k^*}+1}, \, -\vert t\vert \bigr\},
\end{align*}
where ${k^*}\in\{0,1,\ldots,K-1\}$ is such that $t\in[t_{k^*},t_{{k^*}+1}]$.
\end{definition}
Notice that~\eqref{eq:hyp_y} ensures that $\overline{\varphi}_K(y)$ and $\underline{\varphi}_K(y)$ are well defined: for $t=t_k$ we get $\overline{\varphi}_K(y)(t) = \underline{\varphi}_K(y)(t)=y_k$ regardless of whether we consider $k^*=k$ or $k^*=k-1$. Notice also that $\overline{\varphi}_K(y)$ (resp.\ $\underline{\varphi}_K(y)$) is the largest (resp.\ the smallest) piece-wise linear function $\varphi$ such that $\varphi(t_k)=y_k$ for all $k\in\{0,1,\ldots,K\}$, $\varphi(0)=0$, and $\vert\varphi'(t)\vert\leq 1$ for all $t\in[-1,1]\setminus\{0,t_0,\ldots,t_K\}$.

\begin{proposition}
\label{prop:sigma2optim}
Let $K\in\NN_{\geq 1}$, and
\begin{align*}
\Omega_K = \left\{ y\in\RR^{K+1},\ \vert y_k\vert \leq \vert t_k\vert \text{ for } k=0 \dots K,\ \vert y_{k+1}-y_k \vert \leq \vert t_{k+1} - t_k \vert \text{ for } k=0 \dots K-1 \right\}.
\end{align*} 
Then,
\begin{align}
\label{eq:sigmaK0_opt}
\sup_{\substack{ f\in C^1([-1,1],\RR)\\ f'\neq 0}} \frac{\left \Vert f - P_{K}(f)  \right \Vert_{C^{0}}}{\left \Vert f'  \right \Vert_{C^{0}}} = \max_{\substack{ t\in[-1,1] \\ y\in\Omega_K} } \max\left( \overline{\varphi}_K(y)(t) - P_K(y)(t),\, P_K(y)(t) - \underline{\varphi}_K(y)(t) \right).
\end{align}
\end{proposition}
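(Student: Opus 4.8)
## Proof proposal for Proposition~\ref{prop:sigma2optim}

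\textbf{Overall strategy.} The plan is to show both inequalities in~\eqref{eq:sigmaK0_opt}. The ``$\geq$'' direction is a reduction: I will show that the supremum on the left can be computed by restricting attention to functions $f$ with $\|f'\|_{C^0}\leq 1$, and among these, to the ``extremal'' ones, namely piecewise-linear functions with slope $\pm 1$ (and an appropriate normalization so that the quotient is scale- and shift-invariant in a way that lets us fix $f$ at the nodes). The ``$\leq$'' direction then says that any $f$ with $\|f'\|_{C^0}\leq 1$ is squeezed between $\underline{\varphi}_K(y)$ and $\overline{\varphi}_K(y)$ where $y=(f(t_k))_{k}$, so the pointwise interpolation error is controlled by the two explicit extremal differences.

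\textbf{Normalization and reduction to $\|f'\|_{C^0}=1$.} First I would observe that the quotient $\|f-P_K(f)\|_{C^0}/\|f'\|_{C^0}$ is invariant under $f\mapsto cf$ for $c\neq 0$ and under $f\mapsto f+p$ for any $p\in\mathbb{P}_K$ (since $P_K$ is linear and fixes $\mathbb{P}_K$, and $p'$ changes $\|f'\|$ but—wait, that is not invariant). More carefully: the quotient is invariant under $f\mapsto cf$, so we may normalize $\|f'\|_{C^0}=1$, i.e.\ $|f'(t)|\leq 1$ for all $t$. It is \emph{also} invariant under adding a constant $f\mapsto f+c_0$ (constants are interpolated exactly and have zero derivative), so without loss of generality we may impose $f(0)=0$ — this is exactly the normalization built into the definitions of $\overline\varphi_K(y)$ and $\underline\varphi_K(y)$. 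Under these two normalizations, writing $y_k=f(t_k)$, the constraints $|f'|\leq 1$ and $f(0)=0$ force $|y_k|=|f(t_k)-f(0)|\leq |t_k|$ and $|y_{k+1}-y_k|\leq|t_{k+1}-t_k|$, i.e.\ $y\in\Omega_K$. Moreover $P_K(f)=P_K(y)$ depends only on $y$. Thus
\begin{align*}
\sup_{\substack{f\in C^1,\ f'\neq 0}} \frac{\|f-P_K(f)\|_{C^0}}{\|f'\|_{C^0}}
= \sup_{\substack{f:\ |f'|\leq 1,\ f(0)=0}} \|f-P_K(f)\|_{C^0}
= \sup_{y\in\Omega_K}\ \sup_{\substack{f:\ |f'|\leq1,\ f(0)=0\\ f(t_k)=y_k}}\ \sup_{t\in[-1,1]} |f(t)-P_K(y)(t)|.
\end{align*}

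\textbf{The inner optimization over $f$ at fixed $y$ and $t$.} Fix $y\in\Omega_K$ and $t\in[t_{k^*},t_{k^*+1}]$. Among all $C^1$ functions $f$ with $|f'|\leq1$, $f(0)=0$, and $f(t_j)=y_j$ for all $j$, the value $f(t)$ can be made as large as $\overline\varphi_K(y)(t)$ and as small as $\underline\varphi_K(y)(t)$: indeed the three competing linear constraints at $t$ are (i) coming from $t_{k^*}$: $f(t)\leq y_{k^*}+(t-t_{k^*})$, (ii) from $t_{k^*+1}$: $f(t)\leq y_{k^*+1}+(t_{k^*+1}-t)$, and (iii) from $0$: $f(t)\leq |t|$ (if $0\notin(t_{k^*},t_{k^*+1})$ the relevant bound from $0$ propagates through the nodes and is implied, but including $|t|$ does no harm; one checks it is the correct bound when $0$ lies in the cell, and it is never worse than what the node constraints give otherwise — this is the small point that $\Omega_K$ and the definition of $\overline\varphi_K$ are set up to make clean). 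These three bounds are simultaneously attainable by a slope-$\pm1$ piecewise linear $f$, and they are also continuously attainable by smoothing corners, hence attained as a supremum within $C^1$. Since $P_K(y)(t)$ is a fixed number, $\sup_f |f(t)-P_K(y)(t)| = \max\bigl(\overline\varphi_K(y)(t)-P_K(y)(t),\ P_K(y)(t)-\underline\varphi_K(y)(t)\bigr)$, using that the attainable set of values $f(t)$ is the whole interval $[\underline\varphi_K(y)(t),\overline\varphi_K(y)(t)]$ which contains $P_K(y)(t)$ is not needed — we just take the larger of the two one-sided deviations. Taking the supremum over $t$ and $y$ gives the claimed identity; the outer suprema over $t\in[-1,1]$ and $y\in\Omega_K$ are maxima by compactness of $[-1,1]$ and $\Omega_K$ and continuity of all the functions involved (piecewise-linear in $y$ and $t$, $P_K$ being a fixed linear map on $y$).

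\textbf{Main obstacle.} The genuinely delicate point is the inner claim that the extremal admissible $f$ at a given $(y,t)$ is exactly captured by the \emph{three} explicit linear expressions in the $\min$/$\max$, and in particular that the global constraint $|f'|\leq 1$ on all of $[-1,1]$, together with $f(0)=0$ and \emph{all} the node values $y_j$, never produces a binding constraint other than those coming from the two neighboring nodes $t_{k^*},t_{k^*+1}$ and from the origin. This requires the observation recorded just after the definition: $\overline\varphi_K(y)$ (resp.\ $\underline\varphi_K(y)$) is \emph{the} largest (resp.\ smallest) $1$-Lipschitz function vanishing at $0$ and equal to $y_j$ at each $t_j$, so that any admissible $f$ satisfies $\underline\varphi_K(y)\leq f\leq\overline\varphi_K(y)$ pointwise, and conversely these envelope functions are themselves admissible limits of $C^1$ functions. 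Establishing that $\overline\varphi_K(y)$ as \emph{defined locally} (via only the two adjacent nodes and $|t|$) coincides with this global maximal envelope is the crux; it uses that $y\in\Omega_K$ — i.e.\ the Lipschitz constraints are already consistent with the node data — so that constraints from non-adjacent nodes are automatically slack. Everything else is bookkeeping.
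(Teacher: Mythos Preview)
Your proposal is correct and follows essentially the same route as the paper's proof: normalize to $\|f'\|_{C^0}=1$ and $f(0)=0$, observe that $y=(f(t_k))_k\in\Omega_K$ and $P_K(f)=P_K(y)$, use the squeeze $\underline{\varphi}_K(y)\le f\le\overline{\varphi}_K(y)$ for the upper bound, and approximate the piecewise-linear envelopes by $C^1$ functions for the lower bound. The ``main obstacle'' you single out---that the locally defined $\overline{\varphi}_K(y)$ really is the global maximal $1$-Lipschitz function with the prescribed node values and $\varphi(0)=0$---is exactly the fact the paper records (without proof) in the remark just before the proposition, and both arguments rely on it in the same way.
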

\begin{proof}
Without loss of generality, we can assume $\left \Vert f'  \right \Vert_{C^{0}}=1$ and $f(0) = 0$, so that
\begin{align*}
\sup_{\substack{ f\in C^1([-1,1],\RR)\\ f'\neq 0}} \frac{\left \Vert f - P_{K}(f)  \right \Vert_{C^{0}}}{\left \Vert f'  \right \Vert_{C^{0}}} = \sup_{\substack{ f\in C^1([-1,1],\RR) \\ f(0) = 0,\ \left \Vert f'  \right \Vert_{C^{0}} = 1 }} \left \Vert f - P_{K}(f)  \right \Vert_{C^{0}}.
\end{align*}
For any $f\in C^1([-1,1],\RR)$ such that $f(0) = 0$ and $\left \Vert f'  \right \Vert_{C^{0}} = 1$, consider the vector $y\in\RR^{K+1}$ defined by $y_k = f(t_k)$ for $k=0 \dots K$, which belongs to $\Omega_K$. Since $\left \Vert f'  \right \Vert_{C^{0}} = 1$, we get
\begin{align*}
\underline{\varphi}_K(y) \leq f \leq \overline{\varphi}_K(y),
\end{align*}
and $P_K(f) = P_K(y)$. Therefore,
\begin{align*}
\left \Vert f - P_{K}(f)  \right \Vert_{C^{0}} \leq \max_{t\in[-1,1]} \max\left( \overline{\varphi}_K(y)(t) - P_K(y)(t),\, P_K(y)(t) - \underline{\varphi}_K(y)(t) \right),
\end{align*}
and hence
\begin{align*}
\sup_{\substack{ f\in C^1([-1,1],\RR)\\ f'\neq 0}} \frac{\left \Vert f - P_{K}(f)  \right \Vert_{C^{0}}}{\left \Vert f'  \right \Vert_{C^{0}}} \leq \max_{\substack{ t\in[-1,1] \\ y\in\Omega_K} } \max\left( \overline{\varphi}_K(y)(t) - P_K(y)(t),\, P_K(y)(t) - \underline{\varphi}_K(y)(t) \right).
\end{align*}
Conversely, for any $y\in\Omega_K$, the function $f=\overline{\varphi}_K(y)$ satisfies $f(0) = 0$ and $\left \vert f'(t)  \right \vert \leq 1$ for each $t$ where $f'$ is defined. Considering a sequence of $C^1$ functions $f_n$ such that $f_n(0) = 0$, $\left \Vert f'_n  \right \Vert_{C^{0}} = 1$, and $\left\Vert f_n-f\right\Vert_{C^{0}} \underset{n\to\infty}{\rightarrow} 0$, we get that
\begin{align*}
\sup_{\substack{ f\in C^1([-1,1],\RR) \\ f(0) = 0,\ \left \Vert f'  \right \Vert_{C^{0}} = 1 }} \left \Vert f - P_{K}(f)  \right \Vert_{C^{0}} \geq \max_{t\in[-1,1]}  \overline{\varphi}_K(y)(t) - P_K(y)(t).
\end{align*}
Similarly, 
\begin{align*}
\sup_{\substack{ f\in C^1([-1,1],\RR) \\ f(0) = 0,\ \left \Vert f'  \right \Vert_{C^{0}} = 1 }} \left \Vert f - P_{K}(f)  \right \Vert_{C^{0}} \geq \max_{t\in[-1,1]}  P_K(y)(t) - \underline{\varphi}_K(y)(t),
\end{align*}
and therefore
\begin{align*}
\sup_{\substack{ f\in C^1([-1,1],\RR) \\ f(0) = 0,\ \left \Vert f'  \right \Vert_{C^{0}} = 1 }} \left \Vert f - P_{K}(f)  \right \Vert_{C^{0}} \geq \max_{\substack{ t\in[-1,1] \\ y\in\Omega_K} } \max\left( \overline{\varphi}_K(y)(t) - P_K(y)(t),\, P_K(y)(t) - \underline{\varphi}_K(y)(t) \right),
\end{align*}
which concludes the proof.
\end{proof}
Proposition~\ref{prop:sigma2optim} gives us a tractable way of obtaining the optimal constant $\sigma_{K,0}$, the r.h.s. of~\eqref{eq:sigmaK0_opt} being an optimization problem over a compact set of $\RR^{K+2}$. For $K=2$, this optimization problem can be solved by hand, and we get
\begin{theorem}
\begin{align*}
\max_{\substack{ t\in[-1,1] \\ y\in\Omega_2} } \max\left( \overline{\varphi}_2(y)(t) - P_2(y)(t),\, P_2(y)(t) - \underline{\varphi}_2(y)(t) \right) = \frac{14\sqrt{7}-20}{27}\quad (\approx 0.6311),
\end{align*}
which is the optimal value of $\sigma_{2,0}$.
\end{theorem}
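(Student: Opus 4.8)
The plan is to evaluate the right-hand side of~\eqref{eq:sigmaK0_opt} with $K=2$ by hand, which by Proposition~\ref{prop:sigma2optim} yields the optimal constant. First I would record the elementary facts specific to $K=2$: the Chebyshev nodes are $t_0=1$, $t_1=0$, $t_2=-1$, so the defining constraint $\vert y_1\vert\leq\vert t_1\vert=0$ forces $y_1=0$, whence $\Omega_2=\{(y_0,0,y_2):\vert y_0\vert\leq1,\ \vert y_2\vert\leq1\}$; and the interpolant through $y$ at these three nodes is the quadratic $P_2(y)(t)=\frac{y_0+y_2}{2}\,t^2+\frac{y_0-y_2}{2}\,t$.

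Next I would use two symmetries to cut down the optimization. The map $y\mapsto-y$ satisfies $P_2(-y)=-P_2(y)$ and, since $-\overline{\varphi}_2(y)$ is the smallest $1$-Lipschitz function through $(t_k,-y_k)$ and the origin, $-\overline{\varphi}_2(y)=\underline{\varphi}_2(-y)$; consequently this map interchanges the two quantities under the outer $\max$ in~\eqref{eq:sigmaK0_opt}, so it suffices to maximize $\overline{\varphi}_2(y)(t)-P_2(y)(t)$. The reflection $t\mapsto-t$, $(y_0,y_2)\mapsto(y_2,y_0)$ leaves this quantity invariant, so I may also assume $t\in[0,1]$. Using the geometric description of $\overline{\varphi}_2(y)$ noted after its definition — the largest $1$-Lipschitz function vanishing at $0$ and taking the value $y_k$ at $t_k$ — one sees that on $[0,1]$ only the data at $t=0$ and $t=1$ matter, so that $\overline{\varphi}_2(y)(t)=\min\{t,\ y_0+1-t\}$ there, independently of $y_2$.

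The objective then becomes $g(t,y_0,y_2)=\min\{t,\ y_0+1-t\}-\frac{y_0+y_2}{2}t^2-\frac{y_0-y_2}{2}t$. Its only dependence on $y_2$ is through the term $\frac{y_2}{2}t(1-t)$, which is nonnegative on $[0,1]$, so the maximum occurs at $y_2=1$. Substituting $y_2=1$ and setting $s=y_0+1\in[0,2]$, a short computation gives $g=\min\{t,\ s-t\}+t-\frac{s}{2}t(t+1)$, which for each fixed $t\in[0,1]$ is affine in $s$ on each of the two branches $s\le2t$ and $s\ge2t$ and is maximized at the kink $s=2t$ (one checks $0\le2t\le2$ and $y_0=2t-1\in[-1,1]$ automatically), where $g=h(t):=2t-t^2(t+1)$. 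It remains to maximize the cubic $h$ on $[0,1]$: $h'(t)=2-2t-3t^2$ vanishes at $t^\ast=\frac{\sqrt7-1}{3}\in(0,1)$ with $h''(t^\ast)<0$, and using the relation $3(t^\ast)^2+2t^\ast=2$ to eliminate the quadratic and cubic powers one finds $h(t^\ast)=\frac{14t^\ast-2}{9}=\frac{14\sqrt7-20}{27}$, while $h(0)=h(1)=0$. This identifies the maximum in~\eqref{eq:sigmaK0_opt}, and hence the optimal $\sigma_{2,0}$.

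The routine parts are the symmetry bookkeeping, the piecewise-linear formula for $\overline{\varphi}_2$, and the one-variable calculus at the end. The only point that needs a little care is the nested $\min$ in $g$: one must split into the two linear regimes in $s$ and verify that in both the optimal choice is $s=2t$ — equivalently, that the worst-case datum places $t^\ast$ exactly at the corner of $\overline{\varphi}_2$ — after which the problem collapses to maximizing $h$, which is immediate.
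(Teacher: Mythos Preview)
Your argument is correct and complete. The paper itself does not give a proof of this theorem: it simply states that ``for $K=2$, this optimization problem can be solved by hand'' and then records the result. Your write-up supplies exactly that by-hand computation, reducing the optimization in Proposition~\ref{prop:sigma2optim} via the two evident symmetries, eliminating $y_2$ by monotonicity, then $y_0$ by the piecewise-affine structure, and finishing with a one-variable cubic maximization. This is precisely the route the paper has in mind, carried out in full.
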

For $K=3$, solving the optimization problem by hand may also be within reach, but for larger values of $K$ it seems a formidable task. However, we can use interval arithmetic and adaptively subdivide the compact set $[-1,1]\times\Omega_K$ in order to get tight rigorous enclosure of the optimal value of $\sigma_{K,0}$.

\begin{theorem}
For $K\in\{2,\ldots,6\}$, the optimal value of $\sigma_{K,0}$ is contained in the enclosure given in the second column of Table~\ref{tab:sigmaK0} below.
\begin{table}[h!]
\begin{center}
\begin{tabular}{ c||c|c } 
 $K$ & optimal $\sigma_{K,0}$ (computed via Prop.~\ref{prop:sigma2optim}) & $\sigma_{K,0}$ computed via Th.~\ref{thm:interp_error} \\ \hline
 \hline
 2 & [0.6311,0.6312] & 1 \\ \hline 
 3 & [0.6666,0.6667] & 1  \\ \hline 
 4 & [0.5253,0.5254] & 0.8793  \\ \hline 
 5 & [0.4944,0.4945] & 0.7825  \\ \hline 
 6 & [0.4261,0.4265] & 0.6966  
\end{tabular}
\caption{Enclosures for the optimal value of $\sigma_{K,0}$, in the second column. For the sake of comparison, we give in the third column the value of $\sigma_{K,0}$ provided by Theorem~\ref{thm:interp_error}.}
\label{tab:sigmaK0}
\end{center}
\end{table}
\end{theorem}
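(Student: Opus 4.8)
The plan is to prove the theorem as a computer-assisted statement, exploiting the reformulation in Proposition~\ref{prop:sigma2optim}. By that proposition, the optimal value of $\sigma_{K,0}$ equals
\[
\mathcal{M}_K := \max_{(t,y)\in[-1,1]\times\Omega_K}\Psi_K(t,y),\qquad
\Psi_K(t,y) := \max\bigl(\overline{\varphi}_K(y)(t)-P_K(y)(t),\;P_K(y)(t)-\underline{\varphi}_K(y)(t)\bigr),
\]
and $[-1,1]\times\Omega_K$ is a compact subset of $\RR^{K+2}$ cut out by finitely many affine inequalities. So the whole task reduces to rigorously enclosing the maximum of the explicit function $\Psi_K$ (piecewise affine in $t$ and in $y$) over this polytope, for each $K\in\{2,\dots,6\}$.

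For the \emph{lower bound} I would run an off-the-shelf numerical optimizer to locate a near-maximizer $(t^\sharp,y^\sharp)$, slightly adjust it so that it lies provably inside $[-1,1]\times\Omega_K$ (each defining inequality is affine, hence checkable in interval arithmetic), and then evaluate $\Psi_K(t^\sharp,y^\sharp)$ in interval arithmetic; this yields a rigorous bound $\underline m_K\le\mathcal M_K$, the left endpoint in Table~\ref{tab:sigmaK0}. For the \emph{upper bound} I would use a branch-and-bound scheme inside the enclosing box $\prod_k[-\vert t_k\vert,\vert t_k\vert]\times[-1,1]$, maintaining a queue of sub-boxes $B$. For each $B$: first check whether $B$ can intersect $\Omega_K$ — since every constraint is affine its range over $B$ is obtained exactly, so a box violating one of the (necessary) conditions is discarded; otherwise compute an interval enclosure $[\,\cdot\,,\overline\Psi_B]$ of $\Psi_K$ over $B$ (which is still a valid over-estimate of $\sup_{B\cap\Omega_K}\Psi_K$), using the barycentric formula or the fixed linear map $y\mapsto(\text{Chebyshev coefficients})$ for $P_K(y)(t)$, and the explicit $\min/\max$ expressions for $\overline{\varphi}_K(y)(t)$ and $\underline{\varphi}_K(y)(t)$, with the active sub-interval index $k^\ast$ handled by ranging over all $k$ with $[t_k,t_{k+1}]\cap B_t\ne\emptyset$; if $\overline\Psi_B\le\underline m_K$ discard $B$, else bisect along the longest side and re-enqueue. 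The iteration terminates once every surviving box has $\overline\Psi_B$ below the desired right endpoint; taking the maximum of $\overline\Psi_B$ over the boxes that were not discarded for missing $\Omega_K$ gives $\overline m_K$, and $\mathcal M_K\in[\underline m_K,\overline m_K]$ is the claimed enclosure. All interval computations are carried out with \textsc{Intlab} and can be reproduced via~\cite{integratorcode} by running \runfile.

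The main obstacle is controlling the over-estimation (wrapping effect) of interval arithmetic near the maximizer, which has two sources: the bilinear-type dependence of $P_K(y)(t)$ on $(t,y)$, and the non-smooth, piecewise structure coming from the $\min/\max$ in $\overline{\varphi}_K,\underline{\varphi}_K$ and from the $t$-dependence of $k^\ast$. To certify the reported interval, the subdivision must shrink these over-estimates below the gap $\overline m_K-\underline m_K$, and the number of boxes grows quickly with $K$ — this is why the table stops at $K=6$ and why the $K=6$ enclosure is visibly looser. A secondary, bookkeeping difficulty is treating the active index $k^\ast$ and the ``$\vert t\vert$'' branch at $t=0$ correctly when a box straddles a node or the origin; splitting such boxes first along the $t$-axis, so that away from small neighbourhoods of the nodes $k^\ast$ is constant on each box, keeps this under control. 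Finally, for $K=2$ one can short-circuit all of this and recover $\tfrac{14\sqrt7-20}{27}$ by the exact analysis already recorded, which provides an independent check of the numerical enclosure.
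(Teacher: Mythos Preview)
Your proposal is correct and takes essentially the same approach as the paper: the paper's proof is simply a reference to a computer-assisted verification using interval arithmetic with adaptive subdivision of $[-1,1]\times\Omega_K$, exactly the branch-and-bound strategy you describe in more detail. The only inaccuracy is the filename: the paper points to \texttt{script\_InterpConstant.m} rather than \runfile.
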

\begin{proof}
The proof can be reproduced by running \texttt{script\_InterpConstant.m} available at~\cite{integratorcode}, together with \textsc{Intlab}~\cite{Intlab}.
\end{proof}	

\begin{remark}
We only provide enclosures up to $K=6$ because larger values of $K$ did not prove useful in this work: in order to get a smaller contraction rate (i.e. $Z$ bound), increasing the number of subdomains is more efficient than increasing $K$. This was already observed in~\cite{BreLes18} when using interpolation in time for ODEs, and becomes even more important in this work, as increasing the number of subdomains also improves our piece-wise constant in time approximation $\L$ of $DH(\bu)$ (with the notations of the introduction), which in turns improves the $Z$ bound. The only reason to increase $K$ is to get a smaller defect (the $Y$ bound).

However, it should also be noted that the computational cost of the algorithm we used to obtain rigorous enclosures of $\sigma_{K,0}$ increases dramatically with $K$, and that this approach does not seem suitable without algorithmic improvements for much larger values of $K$.
\end{remark}

\begin{remark}
Since the constant $\sigma_{K,0}$ plays a crucial role in our validation estimates, as it appears in the $Z_{\infty N}$ bound, see~\eqref{eq:Z1inftyN}, a possible way to improve upon the setup presented in this paper would be, in the interpolation in time, to replace the Chebyshev nodes by interpolation nodes chosen to minimize $\sigma_{K,0}$, but we did not explore this option in this work.
\end{remark}
\section{Quadrature for the exponential integrals}
\label{app:quadrature}

In this section we explain how to compute rigorous enclosures for 
\begin{align}	
	\int_{-1}^{t} e^{ \lambda \left( t - s \right) } \psi(s) \ \mbox{d}s 
	&= 
	\frac{t+1}{2} \int_{-1}^{1} \exp \left( \frac{\lambda}{2} \left( t + 1 \right) \left( 1- s \right) \right)
	\psi \left( \frac{t+1}{2} \left( s + 1 \right) -1 \right) \mbox{d}s,
	\label{eq:var_int}
\end{align}
where $\lambda\in \CC$, $t\in (-1,1]$ and $\psi: [-1,1] \rightarrow \CC$ is a polynomial function, written in the Chebyshev basis
\begin{align}
\label{eq:psi_coeffs}
	\psi = \psi_{0} + 2 \sum_{k=1}^{K} \psi_{k} T_{k},
\end{align}
for some $K \in \NN_{0}$.
These enclosures are needed to 
rigorously evaluate the map $F_{KN}$ on the computer. The idea is to approximate \eqref{eq:var_int} 
by using a quadrature rule based at the Chebyshev points and to compute a bound for the associated error. 

\paragraph{Clenshaw-Curtis quadrature.}
Let $( t^{K_{1}}_{k} )_{k=0}^{K_{1}}$ denote the Chebyshev points of order $K_{1} \in \NN_{\geq 2}$
(see Definition \ref{def:chebpoints}). The parameter $K_{1}$ is the order of the quadrature
rule and need not be equal to $K$. If $f : \left[-1,1\right] \rightarrow \CC$ is sufficiently smooth and $K_{1}$
is sufficiently large, then 
\begin{align*}
	\int_{-1}^{1} f (s) \ \mbox{d} s \approx \int_{-1}^{1} f_{K_{1}} (s) \ \mbox{d}s,
\end{align*}
where $f_{K_{1}}$ is the Chebyshev interpolant of $f$ of order $K_{1}$. In particular, note that
\begin{align*}
	f_{K_{1}} = \sum_{k=0}^{K_{1}} f \left( t^{K_{1}}_{k} \right) \phi_{k} , 
\end{align*}
where $( \phi_{k} )_{k=0}^{K_{1}}$ are the \emph{Lagrange polynomials} associated to 
$( t^{K_{1}}_{k} )_{k=0}^{K_{1}}$, i.e., $\phi_{k} ( t^{K_{1}}_{l} ) = \delta_{kl}$
for $0 \leq k,l \leq K_{1}$. Hence
\begin{align}
	\label{eq:quadrature}
	\int_{-1}^{1} f_{K_{1}} (s) \ \mbox{d}s = \sum_{k=0}^{K_{1}} f ( t_{k}^{K_1} ) w_{k}, 
	\quad
		w_{k} \bydef  \int_{-1}^{1} \phi_{k}(s) \ \mbox{d}s, \quad 0 \leq k \leq K_{1}.
\end{align}
\begin{remark}
	The weights $\left( w_{k} \right)_{k=0}^{K_{1}}$ are independent of the objective function $f$ and are commonly referred 
	to as the Chebyshev quadrature weights. These weights can be efficiently computed by using the Discrete Fourier
	Transform as explained in \cite{Quadrature}. 
\end{remark}
In conclusion, the integral of $f$ can be approximated by 
\begin{align*}
  	\int_{-1}^{1} f (s) \ \mbox{d} s \approx	\sum_{k=0}^{K_{1}} w_{k} f ( t_{k}^{K_1} ) .
\end{align*}
This particular quadrature rule is referred to as \emph{Clenshaw-Curtis quadrature}. Furthermore, 
the latter approximation is exact whenever $f$ is a polynomial of at most order $K_{1}$. 
The reader is referred to \cite{ApproximationTheory, Quadrature} for a more detailed treatment
of Clenshaw-Curtis quadrature. 

The next theorem, presented in \cite{ApproximationTheory}, provides a bound for the error
associated to Clenshaw-Curtis quadrature:
\begin{theorem}
	\label{thm:Clenshaw}
	Suppose $f: \left[ -1, 1\right] \rightarrow \CC$ can be analytically extended to the open ellipse $\mathcal{E}_{\rho}$,
	for some $\rho >1$. If $f$ is bounded on $\mathcal{E}_{\rho}$, and $K_1$ is even then 
	\normalfont
	\begin{align*}
		\left \vert \int_{-1}^{1} f(s) \ \mbox{d}s - \sum_{k=0}^{K_{1}} w_{k} f ( t_{k}^{K_1} ) \right \vert \leq
		\frac{64}{15} \frac{\rho^{-K_{1}}}{\rho^{2}-1} \sup_{z \in \mathcal{E}_{\rho}} \left \vert f(z) \right \vert.
	\end{align*}
	\begin{proof}
		See \cite[Theorem $19.3$]{ApproximationTheory}. The statement there is only made for real valued functions, but the proof readily extends to complex valued functions.
	\end{proof}
\end{theorem}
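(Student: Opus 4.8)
The plan is to reduce Theorem~\ref{thm:Clenshaw} to its real-valued counterpart, which is precisely~\cite[Theorem~19.3]{ApproximationTheory}, by the same phase-rotation device that underlies the proof of Lemma~\ref{lem:interp_error_real}, augmented by a Schwarz reflection so that the auxiliary real function still extends analytically to $\mathcal{E}_\rho$.

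First I would record the observation that makes the reduction work: the quadrature error functional
\begin{equation*}
E(f) \bydef \int_{-1}^{1} f(s)\,\d s - \sum_{k=0}^{K_1} w_k\, f\bigl(t_k^{K_1}\bigr)
\end{equation*}
has real coefficients, since the nodes $t_k^{K_1}$ lie in $[-1,1]$ and the Clenshaw--Curtis weights $w_k=\int_{-1}^1 \phi_k(s)\,\d s$ are real. Hence $E$ is $\CC$-linear, it depends on its argument only through the restriction to $[-1,1]$, and it satisfies $\Re\bigl(E(g)\bigr)=E\bigl(\Re(g)\vert_{[-1,1]}\bigr)$ for every $g\in C([-1,1],\CC)$.

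Next, fixing $f$ as in the statement, I would choose $\theta\in[0,2\pi)$ with $e^{-i\theta}E(f)=\vert E(f)\vert\ge 0$, set $g\bydef e^{-i\theta}f$ (still analytic and bounded on $\mathcal{E}_\rho$, with $\sup_{\mathcal{E}_\rho}\vert g\vert=\sup_{\mathcal{E}_\rho}\vert f\vert$), and introduce
\begin{equation*}
h(z)\bydef \tfrac12\bigl(g(z)+\overline{g(\bar z)}\bigr),\qquad z\in\mathcal{E}_\rho .
\end{equation*}
Because $\mathcal{E}_\rho$ is symmetric about the real axis, $z\mapsto\overline{g(\bar z)}$ is analytic on $\mathcal{E}_\rho$, so $h$ is analytic and bounded on $\mathcal{E}_\rho$ with $\sup_{\mathcal{E}_\rho}\vert h\vert\le\sup_{\mathcal{E}_\rho}\vert f\vert$, while $h$ is real-valued on $[-1,1]$ and there coincides with $\Re(g)$. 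Combining the two displays I would obtain
\begin{equation*}
\vert E(f)\vert=\Re\bigl(e^{-i\theta}E(f)\bigr)=\Re\bigl(E(g)\bigr)=E\bigl(\Re(g)\vert_{[-1,1]}\bigr)=E(h).
\end{equation*}
It then remains to invoke~\cite[Theorem~19.3]{ApproximationTheory} for the real-valued, analytically continuable, bounded function $h$ with $K_1$ even, which gives $\vert E(h)\vert\le\frac{64}{15}\frac{\rho^{-K_1}}{\rho^2-1}\sup_{\mathcal{E}_\rho}\vert h\vert\le\frac{64}{15}\frac{\rho^{-K_1}}{\rho^2-1}\sup_{\mathcal{E}_\rho}\vert f\vert$, which is the claimed bound.

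The substantive analytic content — the aliasing representation of Chebyshev coefficients, the exactness of Clenshaw--Curtis quadrature on $\mathbb{P}_{K_1}$, and the geometric decay $\sim\rho^{-k}$ of the Chebyshev coefficients of a function analytic on $\mathcal{E}_\rho$ — is entirely contained in~\cite[Theorem~19.3]{ApproximationTheory}, and I would not reprove it. Accordingly, the only step of the plan needing genuine care, and the one I expect a careful reader to scrutinise, is the Schwarz-reflection step: one must check that $\overline{g(\bar z)}$ is well defined and analytic on \emph{all} of $\mathcal{E}_\rho$, which is exactly where the up--down symmetry of the Bernstein ellipse enters, and that the reflection does not increase the supremum over $\mathcal{E}_\rho$. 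Everything else is bookkeeping with the real linear functional $E$.
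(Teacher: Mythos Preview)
Your proposal is correct. The paper's own proof is nothing more than a citation of \cite[Theorem~19.3]{ApproximationTheory} together with the remark that the argument there, stated for real-valued functions, ``readily extends to complex valued functions''. You take a slightly different but equally valid route: rather than re-examining Trefethen's proof to check that each step (Chebyshev coefficient bounds, aliasing identities) goes through for complex-valued $f$, you give a black-box reduction to the real case via phase rotation and Schwarz reflection, exploiting that the quadrature error functional has real coefficients and that $\mathcal{E}_\rho$ is symmetric about the real axis. Your approach has the advantage of being self-contained and not requiring the reader to revisit the cited proof; the paper's approach is terser but leaves that verification implicit. Both are entirely adequate here.
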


As mentioned before, the strategy is to use Clenshaw-Curtis quadrature to approximate \eqref{eq:var_int} and to 
bound the associated error with the aid of the Theorem \ref{thm:Clenshaw}. To use this theorem, however, we
need one final estimate to bound the integrand in \eqref{eq:var_int} on $\mathcal{E}_{\rho}$, and this is where it proves convenient to have $\psi$ written in the Chebyshev basis:

\begin{lemma}
Let $\rho\geq 1$, $t \in [-1,1]$, $\psi$ as in~\eqref{eq:psi_coeffs}, and write $\lambda=\alpha+i\beta$, $\alpha,\beta\in\RR$. Then 
	\begin{align*}
		& \sup_{z \in \mathcal{E}_{\rho}} \left \vert 
		\frac{t+1}{2} \exp \left( \frac{\lambda}{2} \left( t + 1 \right) \left( 1- z \right) \right)
		\psi \left( \frac{t+1}{2} \left( z + 1 \right) -1 \right) \normalfont \right \vert 	\\
		 & \qquad\leq 
		\frac{ t+1}{2} 
		\exp \left( \alpha + \frac{1}{2}\sqrt{\alpha^2(\rho+\rho^{-1})^2 + \beta^2 (\rho-\rho^{-1})^2} \right)
		\left( \left \vert \psi_{0} \right \vert + \sum_{k=1}^{K} \left \vert \psi_{k} \right \vert \left( \rho^{k} + \rho^{-k} \right) \right) .
	\end{align*} 
\end{lemma}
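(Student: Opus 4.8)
The plan is to split the supremum of the product into the product of the suprema of the two factors, since the factor $\frac{t+1}{2}$ is a nonnegative constant that pulls straight out:
\begin{align*}
\sup_{z\in\mathcal{E}_\rho} \left\vert \frac{t+1}{2}\exp\left(\frac{\lambda}{2}(t+1)(1-z)\right)\psi\left(\frac{t+1}{2}(z+1)-1\right)\right\vert \leq \frac{t+1}{2}\left(\sup_{z\in\mathcal{E}_\rho}\left\vert e^{\frac{\lambda}{2}(t+1)(1-z)}\right\vert\right)\left(\sup_{z\in\mathcal{E}_\rho}\left\vert\psi\left(\tfrac{t+1}{2}(z+1)-1\right)\right\vert\right).
\end{align*}
First I would handle the exponential factor. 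Writing $w = \frac{t+1}{2}(1-z)$, note that as $z$ ranges over $\mathcal{E}_\rho$ the quantity $\Re\left(\frac{\lambda}{2}(t+1)(1-z)\right)$ is, up to the affine change $z \mapsto -z$ (which maps $\mathcal{E}_\rho$ to itself), of exactly the same form as the expression $\Re(\lambda(z+1))$ treated in the proof of Lemma~\ref{lem:interp_exp}, only with $\tau$ replaced by $\frac{t+1}{2}$ and with the roles of $+1$ and $-1$ swapped. Concretely, parametrize $z = \frac12(re^{i\theta}+r^{-1}e^{-i\theta})$ with $(r,\theta)\in[1,\rho]\times[0,2\pi]$, compute $\Re\left(\frac{\lambda}{2}(t+1)(1-z)\right)$ as a sinusoid in $\theta$ with amplitude $\frac{t+1}{4}\sqrt{\alpha^2(r+r^{-1})^2+\beta^2(r-r^{-1})^2}$ and vertical shift $\frac{t+1}{2}\alpha$, and take the supremum to get $\frac{t+1}{2}\alpha + \frac{t+1}{4}\sqrt{\alpha^2(\rho+\rho^{-1})^2+\beta^2(\rho-\rho^{-1})^2}$. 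Since $0\le \frac{t+1}{2}\le 1$, this is bounded by $\alpha + \frac12\sqrt{\alpha^2(\rho+\rho^{-1})^2+\beta^2(\rho-\rho^{-1})^2}$ (using $\alpha^+ \le |\alpha|$ if $\alpha<0$, or directly monotonicity if $\alpha\ge 0$; in fact the cleanest route is to bound $\frac{t+1}{2}\alpha \le \alpha^+ \le$ the claimed bound since the square-root term dominates), giving the exponential factor in the statement.

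Next I would bound the polynomial factor. The argument $\frac{t+1}{2}(z+1)-1$ traces out an affine image of $\mathcal{E}_\rho$; I claim this image is contained in the closed region bounded by $\mathcal{E}_\rho$ itself. Indeed, for $z=\frac12(\omega+\omega^{-1})$ with $1\le|\omega|\le\rho$, one checks that $\frac{t+1}{2}(z+1)-1 = \frac12(\omega'+\omega'^{-1})$ for a suitable $\omega'$ with $|\omega'|\le|\omega|\le\rho$ — or, more simply, one notes that $\mathcal{E}_\rho\cup\text{int}$ is convex and contains both $z$ and $-1$ (the latter since $-1\in\overline{\mathcal{E}_\rho}$), hence contains the segment between them, and $\frac{t+1}{2}(z+1)-1 = (1-\tfrac{t+1}{2})(-1) + \tfrac{t+1}{2}z$ is a convex combination of $-1$ and $z$. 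Then, writing $\psi = \psi_0 + 2\sum_{k=1}^K \psi_k T_k$ and applying the bound $\sup_{z\in\mathcal{E}_\rho}|T_k(z)| = \frac{\rho^k+\rho^{-k}}{2}$ established in the proof of Lemma~\ref{lem:upperbounds_norms} (equivalently, \eqref{eq:Crho_norm}), I get $\sup|\psi| \le |\psi_0| + \sum_{k=1}^K |\psi_k|(\rho^k+\rho^{-k})$. Combining the two factor bounds with the constant $\frac{t+1}{2}$ yields the claim.

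The main obstacle, as I see it, is the containment claim for the image of the Bernstein ellipse under the affine map $z\mapsto\frac{t+1}{2}(z+1)-1$: one must be slightly careful that scaling toward $-1$ really keeps the image inside $\overline{\mathcal{E}_\rho}$, which is why I would phrase it via convexity of the filled ellipse together with the observation that $-1$ is in its closure. Everything else — pulling out the positive constant, reusing the sinusoidal optimization from Lemma~\ref{lem:interp_exp}, and invoking \eqref{eq:Crho_norm} — is essentially bookkeeping. One should also double-check the degenerate case $\rho=1$ (where $\mathcal{E}_1=(-1,1)$), but there the estimates hold trivially since the "$\sup$" is over a subinterval of $[-1,1]$ and the stated right-hand side is easily seen to dominate.
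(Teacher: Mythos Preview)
Your proposal is correct and follows essentially the same route as the paper: split the product into two suprema, use the containment $\frac{t+1}{2}(\mathcal{E}_\rho+1)-1\subset\overline{\mathcal{E}_\rho}$ together with~\eqref{eq:Crho_norm} for the polynomial factor, and the sinusoidal parametrization for the exponential factor. Your convexity argument for the ellipse containment is a nice explicit justification of what the paper simply asserts; the only point to tidy is the passage from $\frac{t+1}{2}\bigl(\alpha+\tfrac12\sqrt{\dots}\bigr)$ to $\alpha+\tfrac12\sqrt{\dots}$, which follows cleanly once you observe that $\alpha+\tfrac12\sqrt{\dots}\ge 0$ (since $\sqrt{\dots}\ge(\rho+\rho^{-1})|\alpha|\ge 2|\alpha|$), rather than via the separate $\alpha^+$ bookkeeping you sketch.
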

\begin{proof}
Since $\frac{t+1}{2} \left(\mathcal{E}_{\rho}  + 1 \right) -1 \subset \mathcal{E}_{\rho}$, it follows from Lemma~\ref{lem:upperbounds_norms} that
\begin{align*} 
		& \sup_{z \in \mathcal{E}_{\rho}} \left \vert 
		\psi \left( \frac{t+1}{2} \left( z + 1 \right) -1 \right) \normalfont \right \vert \leq \left \vert \psi_{0} \right \vert + \sum_{k=1}^{K} \left \vert \psi_{k} \right \vert \left( \rho^{k} + \rho^{-k} \right).
	\end{align*}
Besides, we have
\begin{align*}
		& \sup_{z \in \mathcal{E}_{\rho}} \left \vert 
		\frac{t+1}{2} \exp \left( \frac{\lambda}{2} \left( t + 1 \right) \left( 1- z \right) \right) \right \vert 	=
		\frac{ t+1}{2} 
		\exp \left( \frac{t + 1}{2} \sup_{z \in \mathcal{E}_{\rho}} \Re\left(\lambda\left( 1- z \right)\right) \right).
	\end{align*} 
It only remains to be proven that
\begin{equation}
\label{eq:max_exp}
\sup_{z \in \mathcal{E}_{\rho}} \Re\left((\alpha+i\beta)\left( 1- z \right)\right) = \alpha + \frac{1}{2}\sqrt{\alpha^2(\rho+\rho^{-1})^2 + \beta^2 (\rho-\rho^{-1})^2}.
\end{equation}
Any $z\in\mathcal{E}_{\rho}$ can be written $z=\frac{1}{2}\left(re^{i\theta}+r^{-1}e^{-i\theta}\right)$, for some $(r,\theta)\in[1,\rho]\times[0,2\pi]$. We then have
\begin{align*}
\Re\left((\alpha+i\beta)\left( 1- z \right)\right) &= \alpha\left(1-\frac{1}{2}(r+r^{-1})\cos\theta\right) + \beta\frac{1}{2}(r-r^{-1})\sin\theta \\
&= \alpha + \frac{1}{2}\sqrt{\alpha^2(r+r^{-1})^2 + \beta^2 (r-r^{-1})^2}\cos(\theta-\theta_r),
\end{align*}
for some $\theta_r\in[0,2\pi]$, which yields~\eqref{eq:max_exp}.
\end{proof}

\section{Computing the $\chi_N$ bounds}
\label{app:chi}

We provide here an explicit procedure for computing the bounds $\chi_N^{(j)}$, $j=0,\ldots,2R-1$, introduced in Lemma~\ref{lem:tailEstimate} and used in the $Z_\infty$ and $W_\infty$ bounds.

It will be useful to consider
\begin{equation*}
\alpha_l = (-1)^l \left(\bv^{(2l)}\right)_0,\qquad l=0,\ldots,R-1,
\end{equation*}
and the polynomial
\begin{equation*}
P(X) = -X^{R}+\sum_{l=0}^{R-1}\alpha_l X^{l},
\end{equation*}
so that
\begin{equation*}
\Re(\lambda_n) = P(n^2).
\end{equation*}

\begin{enumerate}
\item Define
\begin{equation*}
N^{(j)} \bydef \max\left(\sqrt{\sum_{l=0}^{R-1}\frac{\vert 2l-j\vert}{2R-j}\vert\alpha_l\vert},N+1\right),
\end{equation*}
for which (since $N\geq 0$)
\begin{equation*}
\left(N^{(j)}\right)^2 \geq \max\left(\sum_{l=0}^{R-1}\frac{\vert 2l-j\vert}{2R-j}\vert\alpha_l\vert,1\right).
\end{equation*}
We have
\begin{align*}
-jP(X) + 2XP'(X) = (j-2R)X^R + \sum_{l=0}^{R-1} (2l-j)\alpha_jX^l,
\end{align*}
hence, for $n\geq N^{(j)}$, 
\begin{align*}
-jP(n^2) + 2n^2P'(n^2) \leq 0,
\end{align*}
and therefore the map $n\mapsto \frac{n^j}{-\Re(\lambda_n)}$ is decreasing. Since this map goes to $0$ when $n$ goes to infinity, it means it is also positive for $n\geq N^{(j)}$, in which case $\Re(\lambda_n)$ must be negative, therefore
\begin{align*}
\sup_{n\geq N^{(j)}} n^j \frac{ 1- e^{2\tau\Re(\lambda_{n})} }{ -\Re(\lambda_{n})} \leq 
\sup_{n\geq N^{(j)}}  \frac{ n^j}{ -\Re(\lambda_{n})} \leq 
\frac{ \left(N^{(j)}\right)^j}{ -\Re\left(\lambda_{N^{(j)}}\right)}.
\end{align*}
\item Compute
\begin{align*}
\tilde \chi_N^{(j)} \bydef \max_{N < n \leq  N^{(j)}} n^j \frac{ 1- e^{2\tau\Re(\lambda_{n})} }{ -\Re(\lambda_{n})}.
\end{align*}
\begin{itemize}
\item If 
\begin{align*}
\frac{ \left(N^{(j)}\right)^j}{ -\Re\left(\lambda_{N^{(j)}}\right)} \leq \tilde \chi_N^{(j)},
\end{align*}
then $\chi_N^{(j)} = \tilde \chi_N^{(j)}$ and we are done.
\item Otherwise, compute $\tilde{N}^{(j)}>N^{(j)}$ so that 
\begin{align*}
\frac{ \left(\tilde N^{(j)}\right)^j}{ -\Re\left(\lambda_{\tilde N^{(j)}}\right)} \leq \tilde \chi_N^{(j)},
\end{align*}
and then we get
\begin{align*}
\chi_N^{(j)} = \max_{N < n \leq  \tilde N^{(j)}} n^j \frac{ 1- e^{2\tau\Re(\lambda_{n})} }{ -\Re(\lambda_{n})}.
\end{align*}
\end{itemize}
\end{enumerate}

\bibliographystyle{abbrv}
\bibliography{Bibliography} 

\end{document}